\numberwithin{equation}{section}
\newtheorem{theorem}{Theorem}[section]
\newtheorem{lemma}[theorem]{Lemma}
\newtheorem{proposition}[theorem]{Proposition}
\newtheorem{corollary}[theorem]{Corollary}
\theoremstyle{definition}
\newtheorem{example}[theorem]{Example}
\newtheorem{definition}[theorem]{Definition}
\newtheorem{remark}[theorem]{Remark}
\newenvironment{assumption}[1]
  {\innercustomthm}
  {\endinnercustomthm}
\def\E{{\mathbb E}}
\def\EE{{\mathcal E}}
\def\R{{\mathbb R}}
\def\N{{\mathbb N}}
\def\QQ{{\mathbb Q}}
\def\FF{{\mathbb F}}
\def\P{{\mathcal P}}
\def\K{{\mathcal K}}
\def\X{{\mathcal X}}
\def\G{{\mathcal G}}
\def\Z{{\mathbb Z}}
\def\F{{\mathcal F}}
\def\C{{\mathcal C}}
\newcommand{\half}{{\frac{1}{2}}}
\newcommand{\lan}{\langle}
\newcommand{\ran}{\rangle}
\newcommand{\Emb}{{\mathbb{E}}}
\newcommand{\Rmb}{{\mathbb{R}}}
\newcommand{\Gmc}{{\mathcal{G}}}
\newcommand{\Pmc}{{\mathcal{P}}}
\newcommand{\one}{{\boldsymbol{1}}}
\newcommand{\gmu}{\nu} 
\newcommand{\gX}{Y}
\newcommand{\glambda}{\theta}
\newcommand{\id}{I}
\newcommand{\bK}{\bar{K}}
\newcommand{\bCa}{K}
\newcommand{\bCb}{\bar{K}}
\def\C{{\mathcal C}}
\def\MLset{{\mathcal M}_{\mathrm{init}}}
\def\MRset{{\mathcal M}_{\mathrm{path}}}
\newcommand{\mrfo}{1MRF} 
\newcommand{\mrft}{2MRF}
\title{Locally interacting diffusions as space-time Markov random fields}
\date{\today}
	\subjclass[2000]{60H10,60G60; 82C20} 
	 	\keywords{interacting diffusions, infinite-dimensional stochastic differential equations, Gibbs measures, Markov random fields, space-time Gibbs measures, stochastic processes on graphs} 
	 	\author[Lacker]{Daniel Lacker}
                \address{Department of Industrial Engineering and Operations Research, Columbia University, New York, NY} 
                \thanks{D. Lacker was partially supported by the Air Force Office of Scientific Research (AFOSR) Grant FA9550-19-1-0291}
	 	\author[Ramanan]{Kavita Ramanan}
                \thanks{K. Ramanan was partially supported by NSF-DMS Grants 1713032 and 1954351, and a Simons Fellowship.} 
	 			\address{Division of Applied Mathematics, Brown University, Providence, RI} 
	 	\author[Wu]{Ruoyu Wu}
	 			\address{Department of Mathematics, Iowa State University, Ames, IA}
	 	 \email{daniel.lacker@columbia.edu, kavita\_ramanan@brown.edu, ruoyu@iastate.edu}
\date{\today}
\begin{document}
\begin{abstract}
We consider a countable system of interacting (possibly non-Markovian) stochastic differential equations driven by independent Brownian motions and indexed by the vertices of a locally finite graph $G = (V,E)$. The drift of the process at each vertex is influenced by  the states of that vertex and its neighbors, and the diffusion coefficient depends on  the state of only that vertex. Such processes arise in a variety of applications including statistical physics, neuroscience, engineering and math finance. Under general conditions on the coefficients, we show that if the initial conditions form a second-order Markov random field on $d$-dimensional Euclidean space, then at any positive time, the collection of histories of the processes at different vertices forms a second-order Markov random field on path space. We also establish a bijection  between (second-order) Gibbs measures on $(\R^d)^V$ (with finite second moments) and a set of space-time (second-order) Gibbs measures on path space, corresponding respectively to the initial law and the law of the solution to the stochastic differential equation. As a corollary, we  establish a Gibbs uniqueness property that shows that for infinite graphs the joint distribution of the paths is completely determined by the initial condition and the specifications, namely the family of conditional distributions on finite vertex sets given the configuration on the complement. Along the way, we establish various approximation and projection results for Markov random fields on locally finite graphs that may be of independent interest.
\end{abstract}

\maketitle

\tableofcontents

\section{Introduction}
\label{sec-intro}

\subsection{Discussion of results}

Given a finite or locally finite infinite graph $G$ with vertex set $V$ and edge set $E$,  and a positive integer
$d$, 
 consider  interacting diffusions that satisfy the following stochastic differential equation (SDE) system:  
\[ dX_v(t) = b_v (t, X_v(t), X_{N_v(G)}(t))\,dt + \sigma_v (t, X_v (t))\,dW_v(t), \quad v \in V, \ \  t \ge 0, 
\] 
where the initial condition  $X(0) = (X_v(0))_{v \in V}$ is distributed according
to some given probability measure on $(\R^d)^V$.
Here, $N_v (G) \subset V$ denotes the neighborhood of $v$ in the graph $G$, 
$(b_v, \sigma_v)_{v \in V}$ are given drift and diffusion coefficients, and
$(W_v)_{v \in V}$ are independent standard
$d$-dimensional Brownian motions.    Such diffusions arise
in a variety of applications,   including statistical 
physics \cites{Der03,RedRoeRus10}, neuroscience
  \cites{LucStan14,Med18}, 
  and systemic risk  \cite{nadtochiy2018mean}.
  Under suitable conditions on the coefficients that guarantee the existence of a unique
weak solution to the SDE,  for any $t > 0$, 
we study the random field  on $\C^V_t$ 
generated by the collection of trajectories $(X_v[t] := (X_v(s))_{s \le t})_{v \in V}$, where $\C_t$ (resp. $\C$) denotes the space of $\R^d$-valued continuous functions on $[0,t]$ (resp. $[0,\infty)$).

Our first set of results  
(Theorems  \ref{pr:conditionalindependence-finitegraph}
and  \ref{th:conditionalindependence-infinitegraph}) 
show that, under modest conditions on the drift and diffusion coefficients that
guarantee a unique weak solution to the SDE system on any locally finite graph, 
if  $(X_v(0))_{v \in V}$ is a second-order Markov random field
on $(\R^d)^V$  
(as specified in Definition \ref{def-MRFs}) then  for each $t > 0$, $(X_v[t])_{v \in V}$
is a second-order Markov random field on $\C_t^V$.    
  In fact, we establish this result for a more general class of SDEs with possibly non-Markovian  dynamics (and potentially infinite memory) defined  in  Section \ref{sec-main}.   
  
  Our next set of results  relate to an interpretation of the law of the
    SDE as a space-time Gibbs measure (see Section \ref{se:gibbs} for precise definitions).
    Specifically, Theorem \ref{th:gibbsuniqueness} establishes 
    a  bijection  between 
    (second-order) Gibbs measures  on $(\R^d)^V$ (with finite second moments) and
    a set of space-time (second-order) Gibbs measures on $\C^V$,  corresponding respectively to
    the initial law and the law of the solution to the SDE.  As a consequence, 
    we deduce a Gibbs uniqueness property, which shows that the law of the SDE system 
     is completely
determined by its initial condition and its specifications,  
namely the family of conditional distributions on finite sets given the configuration on the complement. In particular, together these show (see Corollary \ref{cor:gibbs}) that  
when the initial distribution is the unique second-order 
  Gibbs measure  associated with some specifications on $(\R^d)^V$,
 then  for each $t > 0$,  the law of the SDE system is the unique second-order Gibbs measure associated with corresponding specifications on path space $\C_t^V$.

A key  motivation for our study stems from recent results in 
  \cite{LacRamWu-original,LacRamWu20diffusion} that  show how a second-order Markov property is useful for obtaining an autonomous description of the marginal (local) dynamics
of a particle and its  neighborhood when the underlying graph $G$ is a tree. For this purpose, a stronger \emph{global} Markov property is derived in \cite[Proposition 7.3]{LacRamWu-original} and \cite[Proposition 3.15]{LacRamWu20diffusion} in the setting of an infinite regular tree $G$ (or, more generally, a unimodular Galton-Watson tree) and homogeneous coefficients, $(b_v,\sigma_v)=(b,\sigma)$ for all $v \in V$.
Notably, the characterization of the local dynamics in \cite{LacRamWu-original,LacRamWu20diffusion}  
relies  on  the precise order of the Markov random field (equivalently, range
of interaction of the Gibbs state), and not merely the Gibbs  property. 
  Such an autonomous description, together with the local convergence result \cite[Theorem 3.7]{LacRamWu20convergence}  provides a complete and tractable law of large number result for
  interacting diffusions on certain growing sparse networks.

 In addition, such  characterizations of SDEs in terms of space-time Markov random fields are also of broader interest,
 although past work has mainly focused on the case of interacting diffusions on $\mathbb{Z}^m$, and often with an additional
 gradient structure imposed on the drift. 
The earliest work in this direction appears to be that of Deuschel \cite{Deu87}, who 
considered smooth, uniformly bounded drifts of finite range 
that are locally of  gradient type.  The approach in \cite{Deu87} is based on  estimates of 
Dobrushin's contraction coefficient,   and crucially relies  on the uniform boundedness and 
gradient structure of the drift  (see  \cite[Remark (3.5)i)]{Deu87}).  
This result was later generalized to the case of unbounded  drifts, though still of locally gradient form,  
by Cattiaux, Roelly and Zessin  \cite{CatRoeZes96}. They used
Malliavin calculus, a variational characterization and  an integration-by-parts formula
  (see also \cite{RoeZes93,MinRoeZes00}), all of which exploit the  
  gradient structure of the drift.  
  In \cite{MinRoeZes00}, an alternative cluster expansion method was also used 
  when the gradient system can be viewed as a small perturbation of a free
  field.
  Space-time cluster expansion techniques have also been
    applied to non-gradient systems (although to address the slightly
    different question of the Gibbsian nature  of
  marginals; see also Remark \ref{rem-gibbsprop}), but these only apply  
  for sufficiently small time horizons \cite{RedRoeRus10} or sufficiently
  small interaction strengths \cite{RoeRus14}.  
   An important work that goes beyond the gradient setting is that of Dereudre and Roelly \cite{DereudreRoelly2017},  
  which 
  considers a  system of  interacting one-dimensional SDEs on $\Z^m$ with 
  an adapted (possibly history-dependent) drift 
  that is homogeneous (i.e., the same at all vertices of $\Z^m$) and satisfies a linear growth condition, 
  and with a constant  diffusion coefficient  equal to the identity matrix. 
   Under the restriction that the initial condition is stationary (i.e., shift-invariant on $\Z^{m}$), 
  and has finite specific entropy and  second moment, 
   they  show that the trajectories of the SDE system form  a Gibbs or Markov random field  on $\C_t^{\Z^m}$.
   Their method relies on a Girsanov transformation that crucially relies on the diffusion coefficient not depending
   on the state.    Incorporation of state-dependence in the diffusion
   coefficient is 
   important, for example, to cover 
   basic models of interacting affine diffusions.

 As mentioned above,  we consider the much more general setting 
   of (possibly non-Markovian and time-inhomogeneous)
   interacting diffusions on arbitrary locally finite graphs $G = (V,E)$, 
   with vertex-dependent, adapted, finite-range drift coefficients, 
   and diffusion coefficients that may depend both on the
   vertex and  {\it state} (although we do impose continuity of the drift coefficients since we want uniqueness of
   weak solutions).
   Moreover, we do not assume that the initial conditions are shift-invariant (or, more generally, invariant under automorphisms of the  graph). 
   Thus, we have to  develop new  techniques  to prove our results.
First, to establish the Markov random field property  for SDEs on finite graphs 
(Theorem \ref{pr:conditionalindependence-finitegraph}),  
we
apply a version of the Hammersley-Clifford characterization of Markov random fields on finite graphs.
The result for the infinite graph (Theorem \ref{th:conditionalindependence-infinitegraph})
is  then obtained by a delicate approximation by finite-dimensional systems in a way that preserves the Markov random field property,
along with careful  relative entropy estimates. 
Along the way,   we establish various approximation results for  Markov random fields on locally finite graphs
(see Section \ref{se:2MRFs}) that may be of independent interest.
Thus, our methods are quite different from those of the prior works described above. 
They do not require  any gradient structure of the drift, are not restricted to small perturbations
of a free field and, like \cite{DereudreRoelly2017}, allow for unbounded drifts and non-Markovian
dynamics involving path-dependent coefficients.
However, in constrast to  \cite{DereudreRoelly2017} we consider general graphs and initial conditions. 
Indeed, although the work of  \cite{DereudreRoelly2017} also uses finite-volume approximations, the 
pervasive stationarity assumptions they impose allow them to rely on specific entropy, which cannot be used in our setting,  and they also use 
a growth bound on $\mathbb{Z}^{m}$  (e.g., in deriving equation (30) in the proof of Proposition 3.4 therein),
which does not hold for arbitary  locally finite graphs. 
In addition, while the main goal in \cite{DereudreRoelly2017} is to construct shift-invariant solutions of their
SDE, with the Gibbsian (or Markov random field) description of their process coming as a by-product, our objective
is to identify the extent to which the second-order Markov random field property holds, for which no shift-invariance is required.   
Lastly, we establish a correspondence between Gibbs measures on initial configurations on $(\R^d)^V$ and Gibbs measures on the whole path space $\C^V$ in Theorem \ref{th:gibbsuniqueness} and Corollary \ref{cor:gibbs}, which does not appear in  \cite{DereudreRoelly2017}.

Finally,   we also provide examples (see Section \ref{subs-ceg})  that  demonstrate that the Markov random field property
we establish cannot in general be  significantly strengthened.  Precisely, 
even  on a finite graph with gradient drift, in general the collection of histories
  $(X_v[t])_{v \in V}$ do not form a  first-order Markov random field, nor do the time-$t$ marginals 
    $(X_v(t))_{v \in V}$ exhibit any non-trivial conditional independence structure.
This highlights the natural problem of identifying 
  special classes of systems for which simpler Markov random field properties are preserved, a problem which we do not address but which has attracted considerable attention in certain contexts.
Specifically, in the context of diffusions, the papers \cite{DerRoe04,DerRoe05,RedRoeRus10,RoeRus14,van2009gibbsianness,van2010gibbs} 
have studied the phenomenon of Gibbs-non-Gibbs transitions and the propagation (or lack thereof) of 
the Gibbs property at the level of the time-$t$ marginals,  specifically whether the initial law of $X(0)$ being a Gibbs state on $(\R^d)^V$ 
implies that the marginal  law of  $X(t)$ is also a Gibbs state on $(\R^d)^V$.
  See Remark \ref{rem-gibbsprop} for a more detailed description of these works.

The next section introduces some common notation and basic definitions
   used throughout the paper.
 The main results of the paper are stated in
 Section \ref{sec-main}, with their proofs relegated to Sections \ref{sec-finitegraph}--\ref{se:gibbsuniqueness}.

  \subsection{Notation and basic definitions}
  \label{subs-not}

For any vectors $a, b \in \R^d$, we use $a \cdot b$ or $\langle a, b\rangle$ to denote the inner product. 
In this paper, unless explicitly stated otherwise, a \emph{graph} $G=(V,E)$ always has a finite or countably infinite vertex set, is simple (no self-edges or multi-edges), and is locally finite (i.e., the degree of each vertex is finite).
We abuse notation by writing $v \in G$ to mean $v \in V$.  For a graph $G=(V,E)$ and a vertex $v \in V$, we write $N_v(G) = \{u \in V : (u,v) \in E\}$ for the set of neighbors of $v$ in $G$, noting that this set is empty if $v$ is an isolated vertex. A \emph{rooted} graph $G=(V,E,\o)$ is a graph equipped with a distinguished vertex $\o \in V$, called the \emph{root}.
For two vertices $u,v \in V$, let $d(u,v)$ denote the graph distance, i.e., the length of the shortest path from $u$ to $v$ (with $d(u,u) := 0$).
Also, 
let $\mathrm{diam}(A)$ denote the diameter of a set $A \subset V$; precisely,
$\mathrm{diam}(A) = \sup\{d(u,v) : u,v \in A\}$.
For a subset $A \subset V$, we define the first and second boundaries
\begin{equation}
  \label{bdaries}
\begin{array}{rcl}
\partial_G A &=& \{u \in V \backslash A : (u,v) \in E \text{ for some }v \in A\}, \\
\partial^2_G A &=& \partial_G A \cup \partial_G (A \cup \partial_G A).
\end{array}
\end{equation}
We will often omit the subscript, writing simply $\partial^2A$ in place of $\partial^2_GA$, when the underlying graph $G$ is clear.
A \emph{clique}  in a graph $G$ is a complete subgraph of $G$, i.e., a set $A \subset V$ such that $(u,v) \in E$ for every $u,v \in A$. Equivalently, a clique is a set $A \subset V$ of diameter at most $1$. Define $\mathrm{cl}_1(G)$ to be the set of all cliques of the graph $G$.   
Similarly, we will say that any subset $A \subset V$ with diameter at most $2$ is a
{\em $2$-clique} of the graph $G$ and let $\mathrm{cl}_2(G)$ denote 
the set of $2$-cliques of  $G$. 
Moreover, given a graph $G = (V,E)$, $H = (V_H, E_H)$ is said to be an induced subgraph of
$G$ if $V_H \subset V$ and $E_H = E \cap \{(u,v): u, v \in V_H\}$.

For a set $\X$ and a graph $G= (V,E)$, we may write either $\X^V$ or $\X^G$ for the configuration space
$\{(x_v)_{v \in V}: x_v \in \X \mbox{ for every } v \in V\}$. We make use of a standard notation for configurations on subsets of vertices: For $x=(x_v)_{v \in V} \in \X^V$ and $A \subset V$, we write $x_A$ for the element $x_A=(x_v)_{v \in A}$ of $\X^A$.   
When  $\X$ is a Polish space, we write $\P(\X)$ for the set of Borel probability measures on $\X$, endowed always with the topology of weak convergence.  Given  any measurable space
$\X$, $A \subset V$,  and measure $\gmu \in \P(\X^V)$, $\gmu[A]$ represents the
 restriction of $\gmu$ to the set $\X^A$, that is, the image measure under the restriction
 map $\X^V \ni (x_v)_{v\in V} \mapsto (x_v)_{v \in A} \in \X^A$. 

Fixing $d \in \N$, we let 
$\C = C(\R_+;\R^d)$ denote the path space 
of $\R^d$-valued continuous functions on $\R_+=[0,\infty)$,
  endowed with the topology of uniform convergence on compacts. 
  For $t > 0$, let  $\C_t = C([0,t];\R^d)$ denote its restriction to the time interval $[0,t]$, endowed with the uniform topology.
For $x \in \C$ and $t > 0$ we define $\|x\|_{*,t} := \sup_{s \in [0,t]}|x(s)|$, and let $x[t] = (x(s))_{s \le t} \in \C_t$ denote the restriction of the path $x$ to the time interval  $[0,t]$. 
We assume that $\C$ and $\C_t$ are endowed with their respective Borel $\sigma$-algebras.  
 Also, for any countable set $A$ and probability measure $Q$ on $\C^A$, we write $Q_t$ for the image under $Q$ of the map $\C^A \ni (x_v)_{v \in A} \mapsto (x_v[t])_{v \in A} \in \C_t^A$.
 The  $\sigma$-algebra on a product space will always be the product $\sigma$-algebra, unless
 explicitly stated otherwise.
 Given  $J, m \in \N$,  a measurable function $f:[0,\infty) \times \C^J \mapsto \R^m$, 
    is said to be {\em progressively measurable} if for each $t \ge 0$,  
   $f(t,(\tilde{x}_u)_{u=1, \ldots, J}) = f(t,(y_u)_{u = 1, \ldots, J})$ whenever $x_u[t]=y_u[t]$ for all $u = 1, \ldots, J$.
 
  We end this section by recalling the notion of a (first-order or second-order) Markov random field, which plays a central role in the paper.

\begin{definition}
  \label{def-MRFs}
  Given a measurable space $\X$, and a (possibly infinite)
  locally finite graph $G = (V,E)$, 
  let $(\gX_v)_{v \in V}$ be a random element of $\X^V$ with some distribution $\gmu \in \P(\X^V)$.
We say that $(\gX_v)_{v \in V}$, or equivalently its law $\gmu$, is a \emph{first-order Markov random field} (abbreviated as \mrfo) on $\X^V$ if $\gX_A$ is conditionally independent of $\gX_{(A \cup \partial A)^c}$ given $\gX_{\partial A}$, for every finite set $A \subset V$. 
On the other hand, we say that $(\gX_v)_{v \in V}$, or equivalently its law $\gmu$,
is a \emph{second-order Markov random field} (abbreviated as \mrft)  on $\X^V$ if $\gX_A$ is conditionally independent of $\gX_{(A \cup \partial^2 A)^c}$ given $\gX_{\partial^2 A}$, for every finite set $A \subset V$.
When the space $\X^V$ is clear from the context, we will simply say that $(\gX_v)_{v \in V}$, or equivalently its law $\gmu$, is a \mrfo \ or \mrft.
\end{definition}

\begin{remark}
  \label{rem-MRFs} 
In Definition \ref{def-MRFs}, it is important to stress that  the sets A are required to be finite even when the graph $G$  is infinite.  Allowing infinite sets $A$ results in the stronger \emph{global Markov property}, which we do not study in this paper.   
\end{remark}

\section{Main results}
\label{sec-main}

Given a locally finite graph $G = (V,E)$ with a finite or countably infinite vertex set, 
we are interested in a system of  (possibly non-Markovian) interacting stochastic processes, indexed by the
vertices of the graph, that satisfy an SDE of the form 
\begin{align}
  \label{fingraph-SDE}
dX_v(t) &= b_v(t,X_v,X_{N_v(G)}) \,dt + \sigma_v(t,X_v) \,dW_v(t), \quad v \in V,
\end{align}
where $(W_v)_{v \in V}$ are independent Brownian motions,  and 
the initial law $\mu_0 \in \P((\R^d)^V)$, of $(X_v(0))_{v \in V}$ 
and the coefficients $(b_v, \sigma_v)_{v \in V}$  satisfy                                                                                           the conditions stated
in Assumption \ref{assumption:A} or Assumption \ref{assumption:B} below, depending on whether
the graph is finite or infinite. 
As mentioned in the introduction, our  main results concern the characterization
of the law of the solution to the  SDE \eqref{fingraph-SDE} as a \mrft \ on $\C^V$ (see  Definition \ref{def-MRFs}).

\subsection{The finite graph case}
\label{subs-mainfin}

We first consider the case when $G$ is finite,  and the 
 conditions stated in Assumption \ref{assumption:A} below are satisfied. 
 Recall, from  Section \ref{subs-not}, the definition of $2$-cliques,  the notation for trajectories,
 $x[t] = (x(s))_{s \le t} \in \C_t$ for $x \in \C$, and the notion of a progressively measurable functional.

\begin{assumption}{\textbf{A}} \label{assumption:A} $\ $
  We say that $(G,b, \sigma, \mu_0)$ satisfy Assumption \ref{assumption:A}
  if $G = (V,E)$ is a finite graph and if $b=(b_v)_{v \in V}$, $\sigma = (\sigma_v)_{v \in V}$, and $\mu_0 \in \P((\R^d)^V)$ satisfy the following:  
\begin{enumerate}
\item[(\textbf{A}.1)] There exist $\lambda_v \in \P(\R^d), v \in V$, such that
  the probability  measure $\mu_0$ is absolutely continuous with respect to
  the product measure $\mu_0^* = \prod_{v \in V}\lambda_v \in \P((\R^d)^V)$ and the density  satisfies 
\begin{align}
\frac{d\mu_0}{d\mu_0^*}(x) = \prod_{K \in \mathrm{cl}_2(G)}f_K(x_K), \quad\quad x \in (\R^d)^V, \label{asmp:A1factorization}
\end{align}
for some measurable functions $f_K : (\R^d)^K \rightarrow \R_+$, $K \in \mathrm{cl}_2(G)$, where $\mathrm{cl}_2(G)$ is the set of $2$-cliques of $G$.  In addition, $\mu_0$ has a finite second moment.
\item[(\textbf{A}.2)] For each $v \in V$, the drift $b_v: \R_+ \times \C \times \C^{N_v(G)} \mapsto \R^d$ is progressively measurable.   Moreover,  for each $T \in (0,\infty)$ there exists $C_T < \infty$
  such that
\[
|b_v(t,x,y_{N_v(G)})| \le C_T\left(1 + \|x\|_{*,t} + \sum_{u \in N_v}\|y_u\|_{*,t}\right),
\]
for all $v \in V$, $t \in [0,T]$, $x \in \C$, and $y_{N_v(G)} =(y_u)_{u \in N_v(G)} \in \C^{N_v(G)}$.
\item[(\textbf{A}.3)] The diffusion matrices $\sigma_v : \R_+ \times \C \rightarrow \R^{d\times d}$, $v \in V$,  satisfy the following:
  \begin{enumerate}
  \item[(\textbf{A}.3a)] For each $v \in V$, $\sigma_v$ is bounded, progressively measurable and invertible,
    with bounded inverse. 
  \item[(\textbf{A}.3b)] For each $v \in V$, the following driftless SDE system admits a unique in law weak solution starting from any initial position $x \in \R^d$: 
\begin{align}
dX_v(t) = \sigma_v(t,X_v)\,dW_v(t), \quad X_v(0)=x. \label{eq:canonical_law-individual}
\end{align} 
\end{enumerate}
\end{enumerate}
\end{assumption}

\begin{remark}
A necessary condition for Assumption (\ref{assumption:A}.1) is that $\mu_0$ is a \mrft \ and is absolutely continuous with respect to the product measure $\mu_0^*$; this follows from a form of the Hammersley-Clifford theorem stated in Proposition \ref{th:hammersleyclifford2} below. If the density $d\mu_0/d\mu_0^*$ is strictly positive, then it factorizes as in \eqref{asmp:A1factorization} if and only if $\mu_0$ is a \mrft.
\end{remark}

\begin{remark}
\label{rem-assA3}
If $\sigma_v(t,x)=\widetilde\sigma_v(t,x(t))$ depends only on the current state, not the history, and satisfies the additional continuity condition $\lim_{y \rightarrow x} \sup_{0 \leq s \leq T}|\widetilde\sigma_v(s,y) - \widetilde\sigma_v(s,x)| = 0$ for all $v \in V$, then Assumption (\ref{assumption:A}.3b) holds as a consequence of  Assumption (\ref{assumption:A}.3a) and \cite[Chapter 7]{StroockVaradhan}. 
\end{remark}

The following proposition shows that,
as a simple consequence of Girsanov's theorem,  Assumption \ref{assumption:A} 
guarantees weak existence and uniqueness in law of the SDE system \eqref{fingraph-SDE}. 
Its proof is given in Section \ref{ap-finite}, along the way to proving Theorem \ref{pr:conditionalindependence-finitegraph} below.

\begin{proposition}
  \label{prop-unique-finite}
  When $(G,b, \sigma, \mu_0)$ satisfy Assumption \ref{assumption:A},  
    the SDE system   \eqref{fingraph-SDE} has a weak solution that is unique in law. 
\end{proposition}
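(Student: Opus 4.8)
The plan is to realize the law of the solution as a Girsanov transform of a tractable driftless reference measure. First I would construct the reference measure $Q \in \P(\C^V)$ as the law of the driftless system $dX_v(t) = \sigma_v(t,X_v)\,dW_v(t)$, $v \in V$, with initial law $\mu_0$. Since $V$ is finite, each $\sigma_v$ depends only on the coordinate $X_v$, and the driving Brownian motions are independent, the coordinates evolve conditionally independently given $X(0)$; hence $Q$ exists and is unique in law, each conditional coordinate law being pinned down by Assumption (\ref{assumption:A}.3b). Because $\sigma$ is bounded (\ref{assumption:A}.3a) and $\mu_0$ has a finite second moment (\ref{assumption:A}.1), $Q$ inherits finite second moments, $\E^Q[\|X_v\|_{*,T}^2] < \infty$ for all $v,T$.

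For weak existence I would introduce $\theta_v(t) := \sigma_v(t,X_v)^{-1}\,b_v(t,X_v,X_{N_v(G)})$ and the exponential local martingale $Z_t = \exp\bigl(\sum_v \int_0^t \theta_v \cdot dW_v - \tfrac12 \sum_v \int_0^t |\theta_v|^2\,ds\bigr)$, which is well defined because $\sigma_v^{-1}$ is bounded and $b_v$ is progressively measurable with linear growth, so that $\sum_v |\theta_v(t)|^2 \le C(1 + \sum_v \|X_v\|_{*,t}^2)$; note the path-dependence of $b$ causes no difficulty, as Girsanov only requires progressive measurability and this growth bound. The crux is to upgrade $Z$ from a nonnegative supermartingale to a true $Q$-martingale with $\E^Q[Z_T] = 1$. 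Once this holds, Girsanov's theorem shows that under $P := Z_T\,Q$ the processes $\hat W_v := W_v - \int_0^\cdot \theta_v\,ds$ are independent Brownian motions and $X$ solves (\ref{fingraph-SDE}) with initial law $\mu_0$, yielding a weak solution.

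For uniqueness in law I would run the transformation in reverse. Given any weak solution $P'$ with initial law $\mu_0$ and driving noises $W_v'$, I remove the drift: under the measure $\hat Q := \tilde Z_T\,P'$ with $\tilde Z_T = \exp\bigl(-\sum_v \int_0^T \theta_v \cdot dW_v' - \tfrac12 \sum_v \int_0^T |\theta_v|^2\,ds\bigr)$, the process $X$ becomes driftless with initial law $\mu_0$, so $\hat Q = Q$ by the uniqueness established for the reference measure. Since the forward and reverse densities are mutually inverse, $P'$ is recovered as $Z_T\,Q = P$, proving $P' = P$. This step requires that $\tilde Z$ be a genuine $P'$-martingale, which I would secure through a localization and non-explosion argument described below.

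The main obstacle is precisely the martingale property of the Girsanov exponential under a merely linear-growth drift: Novikov's condition fails globally, since linear growth only yields exponents quadratic in $\|X\|_{*,T}$ while $\mu_0$ carries just a second moment, so no global exponential moment is available. I would circumvent this by first conditioning on the initial configuration $X(0) = x_0$, reducing each coordinate to a process started from a fixed point whose supremum is sub-Gaussian with bounded $\sigma$; then, partitioning $[0,T]$ into subintervals $[t_{k-1},t_k]$ short enough that $\E^{Q_{x_0}}\bigl[\exp\bigl(\tfrac12 \int_{t_{k-1}}^{t_k}|\theta|^2\,ds\bigr)\bigr] < \infty$ on each piece, I would verify Novikov's criterion subinterval by subinterval and patch the pieces together via the tower property to conclude that $Z$ is a $Q_{x_0}$-martingale. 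Integrating $\E^{Q_{x_0}}[Z_T] = 1$ against $\mu_0$ then gives $\E^Q[Z_T] = 1$. For the uniqueness step, I would instead use localization at the stopping times $\tau_n = \inf\{t : \sum_v \|X_v\|_{*,t} \ge n\}$, on which the drift is bounded and the reverse density $\tilde Z_{\cdot \wedge \tau_n}$ is trivially a $P'$-martingale, and then pass to the limit using that linear growth together with a Gronwall estimate forces $\tau_n \to \infty$ almost surely (no explosion) under every weak solution.
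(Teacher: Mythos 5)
Your proposal is correct and follows essentially the same route as the paper: a Girsanov transform from the driftless reference law, with the true-martingale property of the stochastic exponential obtained by conditioning on the initial point, using sub-Gaussian concentration of the driftless process (bounded $\sigma$) to get a square-exponential moment, and verifying Novikov's condition on sufficiently short subintervals — this is exactly the paper's Lemma \ref{lem:Girsanov-justification}. The only cosmetic differences are that the paper takes the reference measure started from the product measure $\mu_0^*$ (so the initial density $d\mu_0/d\mu_0^*$ appears in the Radon--Nikodym derivative, which it needs later for the clique factorization) whereas you start from $\mu_0$ directly, and you spell out the localization details of the reverse/uniqueness direction that the paper leaves implicit.
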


We now state our  main result for the SDE system on finite graphs.

\begin{theorem} \label{pr:conditionalindependence-finitegraph}
Suppose  $(G=(V,E),b, \sigma, \mu_0)$ satisfy Assumption \ref{assumption:A}, and let $(X_v)_{v \in V}$ be the unique in law solution of the SDE system \eqref{fingraph-SDE} with  initial law $\mu_0$. Then, for each $t > 0$, $(X_v[t])_{v \in V}$ is a \mrft \ on $\C_t^V$.
Moreover, $(X_v)_{v \in V}$ is a \mrft \ on $\C^V$.
\end{theorem}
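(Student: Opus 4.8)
The plan is to realize the law of the solution as an absolutely continuous perturbation of a product measure whose density factorizes over $2$-cliques, and then to invoke the Hammersley--Clifford characterization (Proposition \ref{th:hammersleyclifford2}) to read off the \mrft\ property. The starting observation is that the reference dynamics obtained by deleting all the drifts decouples across vertices, because each $\sigma_v$ depends only on $X_v$ and the Brownian motions $(W_v)_{v\in V}$ are independent.

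First I would fix $t>0$ and introduce the reference measure $\mathbf{Q}$ on $\C_t^V$ under which the coordinates are independent, the $v$-th coordinate having the law of the unique-in-law solution (guaranteed by (\ref{assumption:A}.3b)) of the driftless equation $dX_v = \sigma_v(s,X_v)\,dW_v$ with $X_v(0)\sim\lambda_v$. By construction $\mathbf{Q}=\prod_{v\in V}\mathbf{Q}_v$ is a product measure. Writing $\mathbf{P}$ for the law on $\C_t^V$ of the full system \eqref{fingraph-SDE}, the two measures differ only in their initial law and in the presence of the drifts, so I would use Girsanov's theorem (the same computation underlying Proposition \ref{prop-unique-finite}) to obtain
\begin{equation*}
\frac{d\mathbf{P}}{d\mathbf{Q}}(x) = \Big(\prod_{K\in\mathrm{cl}_2(G)}f_K(x_K(0))\Big)\prod_{v\in V}\Phi_v(x),
\end{equation*}
where the initial factor comes from (\ref{assumption:A}.1) and, for each $v$,
\begin{equation*}
\Phi_v(x)=\exp\Big(\int_0^t \big\langle \sigma_v^{-1}(s,x_v)\,b_v(s,x_v,x_{N_v(G)}),\,dW_v(s)\big\rangle - \tfrac12\int_0^t \big|\sigma_v^{-1}(s,x_v)\,b_v(s,x_v,x_{N_v(G)})\big|^2\,ds\Big)
\end{equation*}
is the per-vertex Girsanov exponential, with $W_v$ denoting the driving Brownian motion under $\mathbf{Q}$, realized as a measurable functional of the paths through the identity $dW_v = \sigma_v^{-1}(s,x_v)\,dx_v$ valid under $\mathbf{Q}$ (here $\sigma_v^{-1}$ is bounded by (\ref{assumption:A}.3a)).

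The crucial structural point is that $\Phi_v$ depends only on the coordinates $x_{\{v\}\cup N_v(G)}$, and the set $\{v\}\cup N_v(G)$ has diameter at most $2$, hence is a $2$-clique. Since each $f_K$ likewise depends only on the coordinates in the $2$-clique $K$, the density $d\mathbf{P}/d\mathbf{Q}$ is a product of nonnegative functions each depending on the coordinates in a single $2$-clique of $G$. As $\mathbf{Q}$ is a product measure, Proposition \ref{th:hammersleyclifford2} then yields that $\mathbf{P}$, i.e.\ the law of $(X_v[t])_{v\in V}$, is a \mrft\ on $\C_t^V$, proving the first assertion. To pass from $\C_t^V$ to $\C^V$, I would fix a finite set $A$ and note that the $\sigma$-algebras $\sigma(X_B[t])$ increase to $\sigma(X_B)$ as $t\uparrow\infty$ for $B\in\{A,\ \partial^2A,\ (A\cup\partial^2A)^c\}$; the conditional independence established for every $t$ then upgrades to conditional independence of $X_A$ and $X_{(A\cup\partial^2A)^c}$ given $X_{\partial^2A}$ on $\C^V$ by a routine martingale-convergence (L\'evy upward) argument.

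The main obstacle is the rigorous justification of the Girsanov change of measure: under the mere linear-growth bound (\ref{assumption:A}.2), Novikov's condition need not hold, so I expect to establish that $\prod_{v}\Phi_v$ is a genuine density (a true martingale, not merely a local one) by a localization argument, exploiting the boundedness of $\sigma_v^{-1}$ from (\ref{assumption:A}.3a), the finiteness of $V$, and the non-explosion of the linear-growth system; care is also needed to ensure the $2$-clique factorization survives the limit that removes the localization. The remaining ingredients---identifying $\{v\}\cup N_v(G)$ as a $2$-clique and invoking Hammersley--Clifford---are then routine.
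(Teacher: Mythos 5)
Your proposal is correct and follows essentially the same route as the paper: Girsanov change of measure from the product reference law of the driftless system, the observation that each per-vertex exponential $\Phi_v=\EE_t(M_v)$ is measurable with respect to the $2$-clique $\{v\}\cup N_v(G)$, an appeal to Proposition \ref{th:hammersleyclifford2}, and a martingale-convergence upgrade from $\C_t^V$ to $\C^V$. The one technical point you flag (that the exponential is a true martingale under mere linear growth) is resolved in the paper by Lemma \ref{lem:Girsanov-justification}, which uses sub-Gaussian moment bounds for $\|X\|_{*,T}$ and Novikov's condition on sufficiently small subintervals rather than localization, but this is a standard detail and does not change the structure of the argument.
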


The proof of Theorem \ref{pr:conditionalindependence-finitegraph},  given
in Section \ref{ap-finite},  
relies on a certain factorization property (stated in Proposition \ref{th:hammersleyclifford2}) of the density of the law of 
the SDE on finite graphs with respect to a reference measure. 
 Notice that in Assumption \ref{assumption:A}, and throughout the paper, we assume there is no interaction in the diffusion coefficients (i.e., no dependence of $\sigma_v$ on $X_{N_v(G)}$), a restriction made also in the prior works \cite{Deu87,RoeZes93,CatRoeZes96,MinRoeZes00}; the general case seems out of reach of our approach, because the reference measure in the factorization property must crucially be a \emph{product measure}. 
This factorization property is also used in Sections
\ref{se:example-firstorder} and \ref{se:example-marginals} to show that, even when  
the initial states $(X_v(0))_{v \in V}$ are i.i.d.,  for $t > 0$, 
in general $(X_v[t])_{v \in V}$ fails to be a \mrfo,
and the time-$t$ marginals $(X_v(t))_{v \in V}$ can fail to be a 
Markov random field of either first or second order. 
In fact, the counterexamples show that this does not hold even on a finite graph when $\sigma$ is the identity covariance
matrix, and the drift is of gradient type.
This shows that, in a sense, 
Theorem \ref{pr:conditionalindependence-finitegraph} cannot be strengthened.

\subsection{The infinite graph case}
\label{subs-maininfin}

We now consider the SDE system \eqref{fingraph-SDE} in the case when $G$ is an infinite, 
though still locally finite, graph.  
The well-posedness of the SDE system is no longer obvious and in particular 
does not follow from Girsanov's theorem as it did when the graph was finite. 
Indeed, on an infinite graph, when $b_v \equiv 1$ and $\sigma_v \equiv I_d$, $v \in V$, for instance, it is straightforward to argue, using the law of large numbers, that the law of a weak solution of \eqref{fingraph-SDE}  up to some time $t > 0$ and the law of the corresponding drift-free equation are mutually singular.
This necessitates the following additional assumptions compared to Assumption 
\ref{assumption:A}.   Recall from Section \ref{subs-not} that given a measure $\nu$ on ${\mathcal X}^V$ for some Polish space ${\mathcal X}$, and $A \subset V$, $\nu[A]$ denotes the restriction of $\nu$ to $A$.
Also, we use the notation  $\pi_1 \sim \pi_2$ to denote that  the measures $\pi_1$ and $\pi_2$
are equivalent, that is,  mutually absolutely continuous.

\begin{assumption}{\textbf{B}} \label{assumption:B} $\ $
   We say that $(G,b, \sigma, \mu_0)$ satisfy Assumption \ref{assumption:B} if $G=(V, E)$ is a countable locally finite  connected graph and 
  if $b=(b_v)_{v \in V}$, $\sigma = (\sigma_v)_{v \in V}$ and $\mu_0 \in \P((\R^d)^V)$ satisfy the following:     
  \begin{enumerate}
  \item[(\textbf{B}.1)]
    The initial law $\mu_0$ is a \mrft \  on $(\R^d)^V$.  Moreover, there exists a product measure $\mu_0^* = \prod_{v \in V}\lambda_v \in \P((\R^d)^V)$ such that $\mu_0[A] \sim\mu_0^*[A]$ for each finite set $A \subset V$. 
    Further, the initial law $\mu_0$ satisfies 
\begin{align}
  \sup_{v \in V}\int_{(\R^d)^V}|x_v|^2\,\mu_0(dx_V) < \infty. \label{assumption:initialsecondmoment}
  \end{align}
    \item[(\textbf{B}.2)]  
   The drift coefficients $(b_v)_{v \in V}$ satisfy 
   Assumption (\ref{assumption:A}.2), for some constants $(C_T)_{T > 0}$. 
\item[(\textbf{B}.3)]  The diffusion matrices $(\sigma_v)_{v \in V}$ satisfy Assumption (\ref{assumption:A}.3). 
\item[(\textbf{B}.4)] The SDE system \eqref{fingraph-SDE} is unique in law, and this law is denoted by $P = P^{\mu_0} \in \P(\C^V)$.
\end{enumerate}
\end{assumption}

\begin{remark}
	\label{rem-driftless}
	Using Assumption (\ref{assumption:A}.3b) if the graph is finite or Assumption (\ref{assumption:B}.3) if the graph is infinite, we may define for any initial law $\nu \in \P((\R^d)^V)$ the measure $P^{*,\nu} \in \P(\C^V)$ to be the law of the unique weak solution of the SDE system
	\begin{align}
	dX_v(t) = \sigma_v(t,X_v)\,dW_v(t), \quad v \in V, \ \ (X_v(0))_{v \in V} \sim \nu. \label{eq:canonical_law}
	\end{align}
	Note in particular that if we take $\nu=\mu_0^*$, where $\mu_0^*$ is a product measure as in Assumption (\ref{assumption:A}.1) or (\ref{assumption:B}.1), then $P^{*,\mu_0^*}$ too is a product measure.
\end{remark}

We show in Lemma \ref{le:infinitegraphlimit} that existence of a solution to \eqref{fingraph-SDE} 
 follows automatically from Assumptions (\ref{assumption:B}.1--3).  
However, it is worth commenting on the uniqueness condition in  Assumption
(\ref{assumption:B}.4).  The following  proposition shows that a
suitable Lipschitz condition  is enough to guarantee uniqueness;
its proof is standard and hence relegated to  Appendix \ref{ap:uniqueness-infSDE}. 
Recall in the following that $\|x\|_{*,t} = \sup_{s \in [0,t]}|x(s)|$ for $x \in \C$.

\begin{proposition} \label{pr:uniqueness-infiniteSDE}
  Suppose Assumptions (\ref{assumption:B}.1--3)  hold,
  and $(b_v)_{v \in V}$ and $(\sigma_v)_{v\in V}$ are uniformly Lipschitz in  the sense that 
  for each $T > 0$ there exist $\bCa_T, \bCb_T <    \infty$ such that 
 \begin{align*} 
 |b_v(t,x,y_{N_v(G)}) - b_v(t,x',y'_{N_v(G)})| &\le K_T\left(\|x-x'\|_{*,t} + \frac{1}{|N_v(G)|}\sum_{u \in N_v(G)}\|y_u-y'_u\|_{*,t}\right), \\ 
 |\sigma_v(t,x) - \sigma_v(t,x')| &\le \bK_T\|x-x'\|_{*,t}, 
   \end{align*}
 for all $v \in V$, $t \in [0,T]$, $x,x' \in \C$, and $y_{N_v(G)},y'_{N_v(G)} \in \C^{N_v(G)}$.  
Then pathwise uniqueness holds for the SDE system \eqref{fingraph-SDE}.
In particular, Assumption (\ref{assumption:B}.4) holds.
\end{proposition}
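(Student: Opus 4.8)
The plan is to prove \emph{pathwise} uniqueness under the stated Lipschitz hypotheses; uniqueness in law, and hence Assumption (\ref{assumption:B}.4), then follows from the Yamada--Watanabe theorem. So I would fix a filtered probability space carrying the Brownian motions $(W_v)_{v\in V}$, take two solutions $X=(X_v)_{v\in V}$ and $X'=(X'_v)_{v\in V}$ of \eqref{fingraph-SDE} driven by the same noise and with the same initial condition $X(0)=X'(0)$, both lying in the natural class of square-integrable solutions, and show that $X_v=X'_v$ almost surely for every $v$.

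First I would record an a priori second-moment bound. Evaluating the linear growth bound in Assumption (\ref{assumption:A}.2) at $x=0$, $y=0$ shows $\sup_v|b_v(t,0,0)|\le C_T$, so the Lipschitz bound upgrades to a \emph{normalized} growth estimate $|b_v(t,x,y_{N_v(G)})|\le C_T + K_T\|x\|_{*,t} + \frac{K_T}{|N_v(G)|}\sum_{u\in N_v(G)}\|y_u\|_{*,t}$, in which the neighbor contribution enters as an average. Writing $m_v(t):=\E\|X_v\|_{*,t}^2$, applying the Burkholder--Davis--Gundy inequality to the martingale part (using boundedness of $\sigma_v$ from (\ref{assumption:A}.3a)) together with Cauchy--Schwarz and Jensen's inequality on the averaged neighbor term, I obtain $m_v(t)\le C\big(1+\E|X_v(0)|^2+\int_0^t (m_v(s)+\sup_u m_u(s))\,ds\big)$. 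Taking the supremum over $v$, invoking the uniform initial bound \eqref{assumption:initialsecondmoment} and Gronwall's inequality yields $M(t):=\sup_v m_v(t)<\infty$ for every $t$; the same bound applies to $X'$.

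The core of the argument is a weighted difference estimate. Set $D_v(t):=\E\|X_v-X'_v\|_{*,t}^2$. Subtracting the two equations, applying BDG and the two Lipschitz bounds as above, and using $\big(\frac{1}{|N_v(G)|}\sum_{u\in N_v(G)}\|X_u-X'_u\|_{*,s}\big)^2\le \frac{1}{|N_v(G)|}\sum_{u\in N_v(G)}\|X_u-X'_u\|_{*,s}^2$, gives the recursive inequality $D_v(t)\le C_T\int_0^t\big(D_v(s)+\frac{1}{|N_v(G)|}\sum_{u\in N_v(G)}D_u(s)\big)\,ds$. The obstacle, and the only genuinely delicate point, is that on an infinite and possibly unbounded-degree graph one cannot simply sum this over $v\in V$: the naive sum need not converge, and the stationary measure $\deg(\cdot)$ of the neighbor-averaging operator is not summable. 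The remedy I would use is to introduce summable weights tailored to that operator. Fix a root $\o\in V$ and $\beta\in(0,1)$, let $P(u,v)=\mathbf{1}_{\{v\in N_u(G)\}}/|N_u(G)|$ denote the simple-random-walk kernel, and define the discounted Green's function $\gamma_v:=\sum_{n\ge 0}\beta^n P^n(\o,v)$. Connectedness of $G$ (Assumption \ref{assumption:B}) makes $\gamma_v>0$ for all $v$, while $\sum_v\gamma_v=\sum_{n\ge0}\beta^n=\frac{1}{1-\beta}<\infty$. The one-step identity then gives the key inequality $\sum_{v\in N_u(G)}\frac{\gamma_v}{|N_v(G)|}=(P^{\!\top}\gamma)_u=\frac{1}{\beta}(\gamma_u-\mathbf{1}_{\{u=\o\}})\le \frac{1}{\beta}\gamma_u$.

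With these weights set $W(t):=\sum_v\gamma_v D_v(t)$, which is finite since $W(t)\le \frac{1}{1-\beta}\sup_v D_v(t)\le \frac{4M(t)}{1-\beta}<\infty$ by the a priori bound. Multiplying the recursive inequality by $\gamma_v$, summing over $v$, interchanging the order of summation in the neighbor term and applying the key inequality yields $W(t)\le C_T(1+\beta^{-1})\int_0^t W(s)\,ds$. Since $W(0)=0$ and $W$ is locally bounded in $t$, Gronwall's inequality forces $W\equiv 0$, hence $D_v\equiv 0$ and $X_v=X'_v$ almost surely for every $v$. This establishes pathwise uniqueness, and Yamada--Watanabe then gives uniqueness in law, i.e.\ Assumption (\ref{assumption:B}.4). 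Everything outside the weight construction is routine stochastic calculus; the Green's-function weights are precisely what converts the per-vertex estimate into a single scalar Gronwall inequality.
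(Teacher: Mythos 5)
Your proof is correct, but it takes a different route from the paper's. The paper's argument is the $\ell^\infty$ version of yours: after deriving the per-vertex inequality $D_v(t)\le C\int_0^t\bigl(D_v(s)+\frac{1}{|N_v(G)|}\sum_{u\in N_v(G)}D_u(s)\bigr)\,ds$, it simply bounds the averaged neighbor term by $\sup_u D_u(s)$ and takes the supremum over $v$ on the left, obtaining a scalar Gronwall inequality for $\sup_v D_v(t)$ directly — no weights needed. Your observation that one cannot sum the inequality over $v$ is true but beside the point: the normalization $1/|N_v(G)|$ in the Lipschitz hypothesis is exactly what makes the supremum pass through, so the "genuinely delicate point" you identify dissolves. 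Your discounted Green's-function weights $\gamma_v=\sum_n\beta^nP^n(\o,v)$ and the identity $(P^{\top}\gamma)_u\le\beta^{-1}\gamma_u$ are correct and give a valid weighted-$\ell^1$ Gronwall argument, but they also rely on the same normalization (it is what makes $P$ a stochastic kernel), so the extra machinery buys no additional generality here. One small credit to your write-up: you explicitly establish the a priori bound $\sup_v\E\|X_v\|_{*,t}^2<\infty$ needed to make either Gronwall argument legitimate, whereas the paper leaves this implicit (it is essentially the estimate \eqref{pf:quadraticestimate}). Also note the paper concludes pathwise uniqueness and then uniqueness in law exactly as you do, via Yamada--Watanabe.
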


We now state our second main result. 

\begin{theorem} \label{th:conditionalindependence-infinitegraph}
Suppose  $(G=(V,E),b, \sigma, \mu_0)$ satisfy Assumption \ref{assumption:B}, and let $(X_v)_{v \in V}$ be the unique in law solution of the SDE system \eqref{fingraph-SDE} with initial law $\mu_0$. Then, for each $t > 0$, $(X_v[t])_{v \in V}$ is a \mrft \ on $\C_t^V$. 
Moreover, $(X_v)_{v \in V}$ is a \mrft \ on $\C^V$.
\end{theorem}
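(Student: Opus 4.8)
The plan is to deduce the infinite-graph statement from the finite-graph result (Theorem \ref{pr:conditionalindependence-finitegraph}) by exhausting $G$ with finite induced subgraphs, transferring the second-order Markov random field property from each finite approximant, and passing it to the limit with the help of relative-entropy estimates. First I would reduce to a local statement. Fix $t>0$ and a finite set $A\subset V$. Since the product $\sigma$-algebra generated by the coordinates in the infinite set $(A\cup\partial^2A)^c$ is generated by its finite sub-collections, the conditional independence $X_A[t]\perp X_{(A\cup\partial^2A)^c}[t]\mid X_{\partial^2A}[t]$ required by Definition \ref{def-MRFs} holds if and only if $X_A[t]\perp X_B[t]\mid X_{\partial^2A}[t]$ for every finite $B\subset(A\cup\partial^2A)^c$. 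It therefore suffices to verify conditional independence on the finite vertex window $W:=A\cup\partial^2A\cup B$. Because $G$ is locally finite and $A$ is finite, the set of vertices within graph distance $2$ of $A$ is finite, and for any induced subgraph $G_n=(V_n,E_n)$ containing it one has $\partial^2_{G_n}A=\partial^2A$; I fix such an exhausting sequence $G_n\uparrow G$ with $W\subset V_n$.

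Next I would build finite approximants on each $G_n$. Consider the system \eqref{fingraph-SDE} on $G_n$ with initial law $\mu_0[V_n]$, the diffusion coefficients $(\sigma_v)_{v\in V_n}$, and drifts $b^n_v$ obtained from $b_v$ by keeping only the $G_n$-neighbors of $v$ and freezing the finitely many missing $G$-neighbors at an arbitrary fixed value; each $b^n_v$ still obeys the linear-growth bound of Assumption (\ref{assumption:A}.2) relative to $N_v(G_n)$. By Assumption (\ref{assumption:B}.1) together with the Hammersley--Clifford factorization (Proposition \ref{th:hammersleyclifford2}), $\mu_0[V_n]$ satisfies Assumption (\ref{assumption:A}.1) on $G_n$, so $(G_n,b^n,\sigma,\mu_0[V_n])$ satisfies Assumption \ref{assumption:A}. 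Theorem \ref{pr:conditionalindependence-finitegraph} then gives that the finite solution $X^n$ is a \mrft\ on $\C_t^{V_n}$, and since $\partial^2_{G_n}A=\partial^2A$ with $W\subset V_n$, this yields $X^n_A[t]\perp X^n_B[t]\mid X^n_{\partial^2A}[t]$.

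The core of the argument is the passage to the limit. I would show that the law of $(X^n_v[t])_{v\in W}$ converges in total variation to the law of $(X_v[t])_{v\in W}$ under $P=P^{\mu_0}$. Conditional independence of a fixed finite triple of coordinate blocks is preserved under total-variation convergence, because it is equivalent to a factorization of the joint density with respect to the product reference measure $P^{*,\mu_0^*}_t$ of Remark \ref{rem-driftless} (the needed absolute continuity on the window $W$ follows from the equivalence $\mu_0[W]\sim\mu_0^*[W]$ in Assumption (\ref{assumption:B}.1) and Girsanov's theorem, using invertibility of $\sigma$ from Assumption (\ref{assumption:A}.3)), and such a factorization passes to almost-everywhere limits. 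Both the law of $X^n$ and $P$ are absolutely continuous with respect to the common driftless product reference $P^{*,\mu_0^*}$, and the drifts $b^n$ and $b$ coincide at every vertex whose full $G$-neighborhood lies in $V_n$, so the discrepancy is confined to the boundary of $G_n$. The relative-entropy estimates underlying Lemma \ref{le:infinitegraphlimit} and the approximation results of Section \ref{se:2MRFs}, together with the uniform second-moment bound \eqref{assumption:initialsecondmoment} and the linear growth in Assumption (\ref{assumption:B}.2), are used to show that this boundary discrepancy does not propagate into the fixed window $W$ as $G_n\uparrow G$, which gives the desired convergence. Finally, the statement on $\C^V$ follows from the statements on $\C_t^V$ for all $t>0$: the Borel $\sigma$-algebra on $\C^{A}$ is generated by the restriction maps $x\mapsto x[t]$, $t>0$, so conditional independence on every window at every finite time extends, by a monotone-class argument, to the full trajectories.

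The main obstacle is exactly the localization in the previous paragraph. A naive bound on the relative entropy between the laws of $X^n$ and of $X$ over all of $V_n$ diverges, since each of the $\Theta(|\partial_{G_n}V_n|)$ boundary vertices contributes a non-vanishing amount; what is genuinely needed is a local comparison showing that the influence of the distant boundary modification on the marginal over the fixed window $W$ decays to zero. Establishing this decay for unbounded (merely linearly growing) drifts with no gradient structure and with state-dependent diffusion coefficients is the crux of the proof, and it is where the careful relative-entropy estimates and the Markov-random-field approximation results of Section \ref{se:2MRFs} carry the real weight.
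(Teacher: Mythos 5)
Your overall skeleton (finite-graph approximation, entropy estimates, limit passage, and the reductions to finite $B$ and to finite times) matches the paper's strategy, but two of your steps contain genuine gaps, and they are precisely the two places where the paper has to work hardest. First, you take $G_n$ to be an \emph{induced} subgraph and assert that Assumption (\ref{assumption:B}.1) together with Proposition \ref{th:hammersleyclifford2} gives Assumption (\ref{assumption:A}.1) for $\mu_0[V_n]$ on $G_n$. This is false: the restriction of a \mrft\ to an induced subgraph need not be a \mrft\ with respect to that subgraph (this is exactly Example \ref{eg-fingraph}), so $\mu_0[V_n]$ need not admit a $2$-clique factorization over $\mathrm{cl}_2(G_n)$, and Theorem \ref{pr:conditionalindependence-finitegraph} cannot be invoked. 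The paper repairs this by enlarging the edge set --- completing the annulus $U_n = V_n\setminus V_{n-2}$ into a clique (Lemma \ref{le:MRFprojections}) --- which restores both the \mrft\ property and a $2$-clique factorization of $\mu_0[V_n]$ that is \emph{consistent} across $n$ on the cliques meeting $A$.

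Second, your limit passage relies on total-variation convergence of the window marginals $P^n_t[W]\to P_t[W]$, with the required localization (``the boundary discrepancy does not propagate into $W$'') attributed to the entropy estimates behind Lemma \ref{le:infinitegraphlimit}. Those estimates only give uniformly bounded relative entropy, hence precompactness and convergence in the topology of integration against bounded \emph{measurable} test functions (setwise convergence); they do not give TV convergence, and setwise convergence does not preserve conditional independence: your factorization argument needs a.e.\ subsequential convergence of the densities, which weak $L^1$ convergence does not provide. The decay-of-influence statement you flag as ``the crux'' is indeed the crux, and it is not supplied. The paper avoids it entirely: by Lemma \ref{le:specification-abstract} and Proposition \ref{pr:specification-finitegraph}, the conditional law $P^n_t[A\,|\,\partial^2A]$ is \emph{exactly the same} for all $n\ge n_0$ (insensitivity to the graph and coefficients outside $A\cup\partial^2A$, thanks to the consistent factorizations and the product reference measure), so one only ever needs to pass to the limit in expectations of a single fixed bounded measurable function --- which is exactly what setwise convergence delivers. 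Without either this stabilization or an actual proof of local TV convergence, your argument does not close. The reduction to finite $B$ at the start and the martingale-convergence extension from $\C_t^V$ to $\C^V$ at the end are fine and agree with the paper.
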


The proof of Theorem \ref{th:conditionalindependence-infinitegraph} is given in
Section \ref{subs-pf-infingraph}, with preparatory results established in Sections \ref{se:2MRFs}
and \ref{subs-tightness}.  
The factorization result used in the finite graph case  is no longer applicable in the infinite graph case,
and thus the proof employs  a completely different approach, involving a rather
subtle approximation argument, which is outlined in Section \ref{subs-approxmeas}.

\subsection{Gibbs measures on path space} \label{se:gibbs}

Our final results interpret our SDE system in the spirit of Gibbs measures, 
for which we introduce the following notation. 
Given a Polish space $\X$, a graph $G=(V,E)$, a random  $\X^V$-valued element $(\gX_v)_{v \in V}$  with
law $\gmu \in \P(\X^V)$, and two disjoint sets $A,B \subset V$, we write 
$\gmu[A \, | \, B]$ to denote a version of the regular conditional law of $\gX_A$ given $\gX_B$. Precisely, we view $\gmu[A \, | \, B]$ as a measurable map (kernel) from $\X^B$ to $\P(\X^A)$. Note that $\gmu$ is a \mrft\ if and only if $\gmu[A \, | \, V \backslash A](x_{V \backslash A}) = \gmu[A \, | \, \partial^2A](x_{\partial^2A})$ for $\gmu$-almost every $x \in \X^V$ and every finite set $A$.
We make use of the following terminology of Gibbs measures (see \cite{georgii2011gibbs} or \cite{rassoul2015course} for further discussion of this classical framework).

  \begin{definition}
    \label{def-Gibbs}
   Given a Polish space $\X$,  graph $G=(V,E)$,  and $\gamma \in \P(\X^V)$, define
  $\G_2(\gamma)$ as the set of 
   \mrft s $\nu \in \P(\X^V)$ such that, for each finite set $A \subset V$, we have $\nu[A] \sim \gamma[A]$ and also $\nu [A \, | \, \partial^2 A] = \gamma[A \, | \, \partial^2 A]$, almost everywhere with respect to $\gamma[\partial^2 A]$.
  \end{definition}
  
  In other words,  ${\mathcal G}_2(\gamma)$ is 
    the set of (second-order, infinite volume) Gibbs  measures  corresponding to the \emph{specification} $\{\gamma[A \, | \, \partial^2A] : A \subset V \ \rm{finite}\}$.
  Note that if $\gamma$ is itself a \mrft\ then $\G_2(\gamma)$ is nonempty, as it contains $\gamma$ itself. Moreover, it is straightforward to check that, if $\gamma$ and $\nu$ are \mrft s, then $\nu \in \G_2(\gamma)$ if and only if $\gamma \in \G_2(\nu)$. 
   Recall that, by Assumption (\ref{assumption:B}.4), the SDE system \eqref{fingraph-SDE} is well-posed starting from any initial distribution. 
 The following bijection result 
 is proved in Section \ref{se:gibbsuniqueness}.

\begin{theorem} \label{th:gibbsuniqueness}
  Suppose $(G, b, \sigma, \mu_0)$ satisfy Assumption \ref{assumption:B}.  Let $P^{\mu_0} \in \P(\C^V)$ 
  be the law of the solution of the SDE
  system  \eqref{fingraph-SDE} with initial law $\mu_0$ 
  and define
  \[  \MLset (\mu_0) := \Big\{ \nu_0 \in \G_2(\mu_0) : \sup_{v \in V}\int_{\R^d}|x_v|^2\,\nu_0(dx) < \infty\Big\}, \]
  and
  \[   \MRset (\mu_0) := \left\{ Q \in \P(\C^V) : Q_t \in \G_2(P^{\mu_0}_t) \ \forall t \ge 0, \ \sup_{v \in V}\int_{\C^V}|x_v(0)|^2Q(dx) < \infty\right\}.
  \]
Then it holds that
\begin{align}
  \MLset(\mu_0)  = \Big\{Q \circ (X_V(0))^{-1} : Q \in \MRset(\mu_0) \}. 
  \label{claim} 
\end{align} 
Moreover, the map $Q \mapsto Q \circ (X_V(0))^{-1}$ defines a bijection between $\MRset(\mu_0)$ and
$\MLset(\mu_0)$.  In particular, if $Q \in \P(\C^V)$ satisfies $Q_t \in \G_2(P^{\mu_0}_t)$ for all $t \ge 0$ and also $Q \circ (X_V(0))^{-1} = \mu_0$, then $Q=P^{\mu_0}$. 
\end{theorem}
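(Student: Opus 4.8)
The plan is to exhibit explicit inverse maps and verify they are well-defined. Write $\Phi(\nu_0) := P^{\nu_0}$ for the law of the solution of \eqref{fingraph-SDE} started from $\nu_0$ (which is well-posed from any initial law, as noted before the theorem via Assumption (\ref{assumption:B}.4)), and $\Psi(Q) := Q\circ(X_V(0))^{-1} = Q_0$ for the time-zero marginal. Since $P^{\mu_0}_0 = \mu_0$, taking $t = 0$ in the definition of $\MRset(\mu_0)$ shows immediately that $\Psi(Q) = Q_0 \in \G_2(\mu_0)$ for every $Q \in \MRset(\mu_0)$, and the finite second moment is built into the definition of $\MRset(\mu_0)$; hence $\Psi$ maps $\MRset(\mu_0)$ into $\MLset(\mu_0)$, and $\Psi\circ\Phi = \mathrm{id}$ on $\MLset(\mu_0)$ trivially. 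Thus the theorem reduces to two assertions: (i) $\Phi$ maps $\MLset(\mu_0)$ into $\MRset(\mu_0)$, which together with the above yields the set equality \eqref{claim} and surjectivity of $\Psi$ onto $\MLset(\mu_0)$; and (ii) $\Psi$ is injective, equivalently every $Q \in \MRset(\mu_0)$ equals $P^{Q_0}$, which is precisely the ``in particular'' Gibbs-uniqueness statement upon specializing to $Q_0 = \mu_0$.

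For (i), fix $\nu_0 \in \MLset(\mu_0)$. Because $\nu_0 \in \G_2(\mu_0)$, for every finite $A$ we have $\nu_0[A] \sim \mu_0[A] \sim \mu_0^*[A]$ and $\nu_0$ is a \mrft, so $(G,b,\sigma,\nu_0)$ satisfies Assumption \ref{assumption:B}; Theorem \ref{th:conditionalindependence-infinitegraph} then gives that $P^{\nu_0}_t$ is a \mrft\ on $\C_t^V$ for each $t$. It remains to show that the specifications of $P^{\nu_0}_t$ and $P^{\mu_0}_t$ coincide, i.e. $P^{\nu_0}_t[A\,|\,\partial^2 A] = P^{\mu_0}_t[A\,|\,\partial^2 A]$ and $P^{\nu_0}_t[A] \sim P^{\mu_0}_t[A]$ for finite $A$. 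The key structural input is a local Girsanov representation: on any finite vertex set the density of $P^{\gamma}_t$ relative to the driftless law $P^{*,\gamma}_t$ factorizes, through the exponential martingale, into a product of terms indexed by the sets $\{v\}\cup N_v(G)$ (each a $2$-clique), while $dP^{*,\gamma}_t$ differs from $dP^{*,\mu_0^*}_t$ only through the initial density $d\gamma/d\mu_0^*$ evaluated at time $0$, which itself factorizes over $2$-cliques by Proposition \ref{th:hammersleyclifford2}. Conditioning on $\partial^2 A$ cancels every factor indexed by a $2$-clique disjoint from $A$, and each surviving factor is indexed by a $2$-clique meeting $A$, hence contained in $A\cup\partial^2 A$. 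Thus $P^\gamma_t[A\,|\,\partial^2 A]$ depends on $\gamma$ only through the initial clique factors touching $A$, i.e. through the initial specification of $\gamma$ near $A$; since $\nu_0$ and $\mu_0$ share this specification, the path-space specifications agree. I would make this rigorous through the finite-volume approximation of Section \ref{se:2MRFs}, transferring the exact finite-graph factorization to the local conditionals in the infinite-volume limit.

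For (ii), let $Q \in \MRset(\mu_0)$ and put $\nu_0 := Q_0$. By (i), $\G_2(P^{\mu_0}_t) = \G_2(P^{\nu_0}_t)$, so $Q_t \in \G_2(P^{\nu_0}_t)$ for all $t$ with $Q_0 = \nu_0 = P^{\nu_0}_0$; the goal is $Q = P^{\nu_0}$. The strategy is to show that $Q$ solves the martingale problem associated with \eqref{fingraph-SDE} and then to invoke uniqueness in law (Assumption (\ref{assumption:B}.4)) together with $Q_0 = \nu_0$. For each finite $A$, the identity $Q_t[A\,|\,\partial^2 A] = P^{\nu_0}_t[A\,|\,\partial^2 A]$ together with the explicit local density from part (i) identifies, for the interior coordinates $(X_v)_{v\in A}$, the correct drift $b_v$ and diffusion $\sigma_v\sigma_v^\top$ in the generator; one then lets $A\uparrow V$ and uses the uniform second-moment bound \eqref{assumption:initialsecondmoment} and the tightness estimates of Section \ref{subs-tightness} to pass to the limit and conclude that the canonical drift-compensated coordinates are genuine $Q$-martingales.

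I expect the heart of the difficulty to lie in part (ii). The subtlety is that the space-time Gibbs property only furnishes conditional laws given the \emph{entire} boundary trajectories $(X_v[t])_{v\in\partial^2 A}$, and conditioning on these future-inclusive paths does not a priori preserve adaptedness or the martingale property --- this is exactly why only a second-order (rather than first-order) field appears. One therefore cannot read off the dynamics by naive martingale conditioning; instead the drift must be extracted from the explicit local densities, and the passage $A\uparrow V$ must be controlled uniformly, which is where the finite second moments and the approximation machinery of Sections \ref{se:2MRFs} and \ref{subs-tightness} are essential. Once (i) and (ii) are established, \eqref{claim} is the set equality $\Psi(\MRset(\mu_0)) = \MLset(\mu_0)$, injectivity of $\Psi$ is (ii), so $\Psi$ is a bijection with inverse $\Phi$, and the final uniqueness assertion follows by taking $\nu_0 = \mu_0$.
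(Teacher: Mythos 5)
Your overall architecture coincides with the paper's: the reduction to (i) $\nu_0\mapsto P^{\nu_0}$ mapping $\MLset(\mu_0)$ into $\MRset(\mu_0)$ and (ii) the uniqueness statement, with the bijection then following formally, is exactly the paper's decomposition into its Claims 1--5. Your part (i) is the paper's argument in sketch form (Girsanov factorization over the $2$-cliques $\{v\}\cup N_v(G)$, cancellation of factors disjoint from $A$ under conditioning, transfer to the infinite graph via Proposition \ref{th:specification-infinitegraph}); the one point you gloss over is that the $2$-clique factorizations of $\mu_0[V_n]$ and $\nu_0[V_n]$ must be chosen \emph{consistently} near $A$, which is not automatic from $\nu_0\in\G_2(\mu_0)$ --- the paper derives it by comparing $\nu_0[V_{n-2}\,|\,U_n]=\mu_0[V_{n-2}\,|\,U_n]$ and absorbing the discrepancy into the single clique factor indexed by $U_n$. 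That is a fixable omission.

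The genuine gap is in part (ii). You correctly diagnose the obstruction --- the Gibbs hypothesis only pins down the conditional law of the interior given the full boundary \emph{trajectories}, and this future-inclusive conditioning is incompatible with naive martingale arguments --- but you do not supply the idea that overcomes it. Knowing $Q_t[A\,|\,\partial^2A]$ does not identify any generator or drift for $Q$, because the marginal law $Q_t[\partial^2 A]$ of the boundary paths is completely unknown; "extracting the drift from the explicit local densities" founders precisely there. The paper's resolution is to take $A=V_{n-2}$ (so $\partial^2A=U_n$) and invoke the \emph{martingale representation theorem} twice: once to write the unknown boundary density $dQ_T[U_n]/dP^{*,\mu_0}_T[U_n]$ as a product of stochastic exponentials $\EE_t(R^n_v)$ of martingales adapted to $X_{U_n}$, and once more for the conditional-expectation denominator appearing in the conditional density formula. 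Combining these with an exponential-algebra computation (and the observation that the finite-variation part $A(t)$ in $Z(t)=\EE_t(N)e^{A(t)}$ must vanish because $Z$ is a martingale) exhibits $Q_t[V_n]$ as the law of an explicit SDE whose drift is the \emph{correct} $b_v$ on $V_{n-2}$ and some uncontrolled but square-integrable perturbation on $V_n\setminus V_{n-2}$. Only then does the limit $n\to\infty$, tightness, and uniqueness in law (Assumption (\ref{assumption:B}.4)) yield $Q=P^{\mu_0}$. Without this representation of the boundary marginal your plan has no mechanism to convert the conditional-law identity into a statement about the unconditional dynamics of $Q$, so as written part (ii) does not go through.
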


In fact, we will show in the proof of Theorem \ref{th:gibbsuniqueness} that the bijection $Q \mapsto Q \circ (X_V(0))^{-1}$ between the sets $\MRset(\mu_0)$ and $\MLset(\mu_0)$
has inverse given by $\nu_0 \mapsto P^{\nu_0}$, where $P^{\nu_0}$ denotes the law of the solution of the SDE \eqref{fingraph-SDE} with initial law $\nu_0$, and we note that this SDE is unique in law by Assumption (\ref{assumption:B}.4).
Additionally,  if $\mu_0(K^V) =1$ for some compact set $K \subset \R^d$, then (recalling that membership in $\G_2(\cdot)$ requires absolute continuity) \eqref{claim} can be rewritten  as
\begin{align*}
\G_2(\mu_0) = \Big\{Q \circ (X_V(0))^{-1} : Q \in \P(\C^V), \, Q_t \in \G_2(P^{\mu_0}_t) \ \forall t \ge 0\Big\}.
\end{align*}

We conclude this section with the following
  simple corollary of Theorems \ref{th:conditionalindependence-infinitegraph} and \ref{th:gibbsuniqueness}.
  \begin{corollary}
    \label{cor:gibbs}
    Suppose $(G, b, \sigma, \mu_0)$ satisfy Assumption \ref{assumption:B} and $P^{\mu_0}$ represents the unique
    law of the SDE \eqref{fingraph-SDE}.  If $\G_2(\mu_0)$ is a singleton, then  the set $\MRset(\mu_0)$ defined in Theorem \ref{th:gibbsuniqueness} is equal to the singleton $\{P^{\mu_0}\}$,  
    and hence, $P^{\mu_0}$ is completely characterized by its specifications $P_t^{\mu_0}[A| \partial^2 A], t \geq 0,$ for finite $A \subset V$. 
  \end{corollary}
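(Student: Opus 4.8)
The plan is to derive the corollary directly from the bijection in Theorem~\ref{th:gibbsuniqueness} together with the second-order Markov random field property established in Theorem~\ref{th:conditionalindependence-infinitegraph}; no new analytic estimates are needed, since the substantive work is already carried out in those two theorems.

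First I would pin down the set $\G_2(\mu_0)$. By Assumption~(\ref{assumption:B}.1) the initial law $\mu_0$ is a \mrft, and as noted immediately after Definition~\ref{def-Gibbs} any \mrft\ belongs to its own Gibbs class; hence $\mu_0 \in \G_2(\mu_0)$. If $\G_2(\mu_0)$ is a singleton it must therefore equal $\{\mu_0\}$. Since $\mu_0$ also satisfies the uniform second-moment bound \eqref{assumption:initialsecondmoment}, it qualifies for membership in $\MLset(\mu_0)$, and because $\MLset(\mu_0) \subseteq \G_2(\mu_0) = \{\mu_0\}$ by definition, I conclude $\MLset(\mu_0) = \{\mu_0\}$.

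Next I would invoke Theorem~\ref{th:gibbsuniqueness}, which asserts that $Q \mapsto Q \circ (X_V(0))^{-1}$ is a bijection from $\MRset(\mu_0)$ onto $\MLset(\mu_0)$ with inverse $\nu_0 \mapsto P^{\nu_0}$. As the codomain $\MLset(\mu_0) = \{\mu_0\}$ is a singleton, the domain $\MRset(\mu_0)$ is a singleton as well, and its unique element is the image of $\mu_0$ under the inverse map, namely $P^{\mu_0}$. (That $P^{\mu_0}$ genuinely lies in $\MRset(\mu_0)$ is already guaranteed by the theorem, but it can also be seen directly: Theorem~\ref{th:conditionalindependence-infinitegraph} shows each $P_t^{\mu_0}$ is a \mrft, so $P_t^{\mu_0} \in \G_2(P_t^{\mu_0})$, while the initial second moment of $P^{\mu_0}$ coincides with that of $\mu_0$ and is finite by Assumption~(\ref{assumption:B}.1).) This gives $\MRset(\mu_0) = \{P^{\mu_0}\}$.

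For the final characterization statement I would observe that the defining conditions of $\MRset(\mu_0)$, namely $Q_t \in \G_2(P_t^{\mu_0})$ for all $t \ge 0$ together with the second-moment bound, depend on $P^{\mu_0}$ only through the family of specifications $\{P_t^{\mu_0}[A \mid \partial^2 A] : A \subset V \text{ finite}\}$, $t \ge 0$, since by Definition~\ref{def-Gibbs} these conditional kernels are precisely what determine the Gibbs sets $\G_2(P_t^{\mu_0})$. Having shown $\MRset(\mu_0) = \{P^{\mu_0}\}$, it follows that $P^{\mu_0}$ is the only law with finite initial second moment consistent with these specifications, which is exactly the asserted Gibbs uniqueness. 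I do not expect a genuine obstacle in this corollary; the only step demanding care is the bookkeeping required to apply the two theorems, in particular confirming that the moment bound transfers from $\mu_0$ to the time-zero marginal of $P^{\mu_0}$ and that the \mrft\ property of $P_t^{\mu_0}$ places it in its own Gibbs class.
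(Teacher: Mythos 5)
Your proposal is correct and follows essentially the same route as the paper: identify $\MLset(\mu_0)=\{\mu_0\}$ from the singleton hypothesis and the fact that $\mu_0$ is a \mrft\ with finite second moment, transfer the singleton property to $\MRset(\mu_0)$ via the bijection of Theorem~\ref{th:gibbsuniqueness}, and pin down the unique element as $P^{\mu_0}$ using the \mrft\ property from Theorem~\ref{th:conditionalindependence-infinitegraph}. The paper's own proof is a condensed version of exactly this argument.
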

  \begin{proof}
Since Assumption (\ref{assumption:B}.1) ensures that $\mu_0$ is a \mrft \,  with finite second moment,
the set on the left-hand side of \eqref{claim} always contains $\mu_0$.  Thus, if $\G_2(\mu_0)$ is a singleton,
then by Theorem \ref{th:gibbsuniqueness} the
set $\MRset(\mu_0)$
is also a singleton.  On the other hand, by Theorem \ref{th:conditionalindependence-infinitegraph}, for each $t \ge 0$ it holds that
$P_t^{\mu_0}$ is a \mrft \ and thus $P_t^{\mu_0} \in \G_2(P^{\mu_0}_t)$.  Hence, $P^{\mu_0} \in \MRset(\mu_0)$. 
\end{proof}

 \section{Interacting diffusions on a finite graph}
 \label{sec-finitegraph}

In Section \ref{subs-clique} (specifically Proposition \ref{th:hammersleyclifford2}) 
 a useful characterization of a (positive) \mrft \  is derived in an abstract setting.  
This is then used in Section \ref{ap-finite} to prove Theorem  \ref{pr:conditionalindependence-finitegraph};
along  the way   Proposition \ref{prop-unique-finite} is also established. 
 In Sections \ref{se:example-firstorder} and \ref{se:example-marginals} 
 this characterization is applied to  demonstrate via  explicit  examples
 that the space-time \mrft \  property established in Theorem \ref{pr:conditionalindependence-finitegraph} 
 (and hence, Theorem \ref{th:conditionalindependence-infinitegraph})  
cannot in general be substantially improved.

\subsection{Clique factorizations}
\label{subs-clique}

We start by studying the relationship between random fields and factorization properties of their joint density with respect to a given reference measure.
Throughout this section, we work with a fixed finite graph $G=(V,E)$,  as well as a fixed Polish space $\X$, the state space.  
Recall the definition of the diameter  $\mathrm{diam}(A)$ of a set $A \subset V$,
$1$-cliques and $2$-cliques of a graph, and 1st-order and 2nd-order MRFs given
in Section \ref{subs-not}.

First, we recall a well-known theorem often attributed to Hammersley-Clifford, which can be found in various forms in \cite[Theorem 2.30]{georgii2011gibbs} and \cite[Proposition 3.8 and Theorem 3.9]{lauritzen1996graphical}, the latter covering our precise setting.

\begin{proposition}[Hammersley-Clifford] \label{th:hammersleyclifford}
Assume the graph $G=(V,E)$ is finite.  
Assume  $\gmu \in \P(\X^V)$ is absolutely continuous with respect to a product measure $\gmu^* = \prod_{v \in V} \glambda_v \in \P(\X^V)$ for
some $\glambda_v \in \P(\X)$, $v \in V$. 
Consider the following statements: 
\begin{enumerate}[(1)]
\item $\gmu$ is a \mrfo. 
\item The density of $\gmu$ with respect to $\gmu^*$ factorizes in the form
\[
\frac{d\gmu}{d\gmu^*}(x) = \prod_{K \in \rm{cl}_1(G)}f_K(x_K), \qquad x \in \X^V,
\]
for some measurable functions $f_K : \X^K \rightarrow \R_+$, for $K \in \rm{cl}_1(G)$.
\end{enumerate}
Then (2) implies (1). If also $d\gmu/d\gmu^*$ is strictly positive, then (1) implies (2).
\end{proposition}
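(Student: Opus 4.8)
The plan is to prove the two implications separately, with $(2)\Rightarrow(1)$ being routine and $(1)\Rightarrow(2)$ carrying the real content. I write $p=d\gmu/d\gmu^*$ throughout.

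For $(2)\Rightarrow(1)$ I would argue directly from the factorization. Fix any $A\subseteq V$ (automatically finite, as $V$ is finite). The key combinatorial observation is that every clique $K$ with $K\cap A\neq\emptyset$ is contained in $A\cup\partial A$: all vertices of a clique are pairwise adjacent, so if one of them lies in $A$, then each of the others lies in $A$ or is a neighbor of a vertex of $A$. Hence $\prod_{K\cap A\neq\emptyset}f_K(x_K)$ is a function of $x_{A\cup\partial A}$ alone, while $\prod_{K\cap A=\emptyset}f_K(x_K)$ is a function of $x_{A^c}$. Since $\gmu^*$ is a product measure, a version of the conditional law of $X_A$ given $X_{A^c}$ has density proportional to $\prod_{K\cap A\neq\emptyset}f_K$, which depends on $x_{A^c}$ only through $x_{\partial A}$. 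This is exactly the statement that $X_A$ is conditionally independent of $X_{(A\cup\partial A)^c}$ given $X_{\partial A}$, so $\gmu$ is a \mrfo.

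For $(1)\Rightarrow(2)$ I would use the classical M\"obius-inversion argument. Since $p>0$, the function $H:=\log p$ is well defined, and the $A=\{v\}$ case of the \mrfo\ property (the local Markov property) lets me choose a version of the conditional $\gmu$-density of $X_w$ given $X_{V\setminus\{w\}}$ that depends only on $(x_w,x_{N_w(G)})$. I would fix a reference configuration $x^*$ from a $\gmu^*$-full-measure set on which all the (finitely many) relevant identities hold; for $B\subseteq V$, let $x^B$ denote the configuration agreeing with $x$ on $B$ and with $x^*$ off $B$, and set
\[ \phi_A(x)=\sum_{B\subseteq A}(-1)^{|A\setminus B|}H(x^B),\qquad A\subseteq V. \]
Inclusion--exclusion gives $H(x)=\sum_{A\subseteq V}\phi_A(x)$, and by construction $\phi_A$ depends only on $x_A$. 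The heart of the proof is to show $\phi_A\equiv 0$ whenever $A$ is not a clique. Choosing a non-adjacent pair $u,w\in A$ and pairing $B$ with $B\cup\{w\}$, I would rewrite
\[ \phi_A(x)=\sum_{B\subseteq A\setminus\{w\}}(-1)^{|A\setminus B|}\big(H(x^B)-H(x^{B\cup\{w\}})\big). \]
Because $x^B$ and $x^{B\cup\{w\}}$ differ only at $w$ and $\gmu^*$ is a product measure, each increment equals a log-ratio of the conditional density of $X_w$ and hence, by the local Markov property, is a function of the coordinates in $\{w\}\cup N_w(G)$ only. As $u\neq w$ and $u\notin N_w(G)$, this increment is unchanged when $u$ is added to $B$; pairing $B$ with $B\cup\{u\}$ (whose sign is opposite) then cancels all terms, so $\phi_A\equiv 0$. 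Consequently $H=\sum_{K\in\mathrm{cl}_1(G)}\phi_K$, and exponentiating with $f_K:=e^{\phi_K}$ (absorbing the constant $\phi_\emptyset$ into a singleton factor) yields the claimed factorization.

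The step I expect to be the main obstacle is the measure-theoretic bookkeeping underlying the cancellation. The \mrfo\ property is only an almost-everywhere statement, so I must first upgrade the conditional-independence formulation to an honest one-dimensional conditional density for $X_w$ that is a genuine function of $(x_w,x_{N_w(G)})$ --- here positivity of $p$ and the product structure of $\gmu^*$ are essential --- and then choose $x^*$ so that all finitely many such identities hold simultaneously; finiteness of $V$ guarantees that the corresponding full-measure sets have full-measure intersection, so a valid $x^*$ exists. Once the increments are bona fide functions of $(x_w,x_{N_w(G)})$, the combinatorial cancellation is purely formal.
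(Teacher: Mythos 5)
Your proof is correct, but note that the paper does not actually prove Proposition \ref{th:hammersleyclifford}: it is quoted as a known result, with the proof outsourced to \cite[Theorem 2.30]{georgii2011gibbs} and \cite[Proposition 3.8 and Theorem 3.9]{lauritzen1996graphical}. What you have written is essentially the classical argument contained in those references. The direction $(2)\Rightarrow(1)$ via the observation that every clique meeting $A$ lies in $A\cup\partial A$, so that the conditional density of $X_A$ given $X_{A^c}$ depends on $x_{A^c}$ only through $x_{\partial A}$, is exactly the standard ``factorization implies local/pairwise Markov'' step, and it is fine. The direction $(1)\Rightarrow(2)$ via M\"obius inversion of $H=\log p$ against a reference configuration $x^*$, with the cancellation $\phi_A\equiv 0$ for non-cliques obtained by pairing $B$ with $B\cup\{w\}$ and then with $B\cup\{u\}$ for a non-adjacent pair $u,w\in A$, is the canonical-potential proof of Hammersley--Clifford and uses positivity exactly where it must. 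The one place where your write-up is thinner than a complete argument is the measure-theoretic selection of $x^*$: the local Markov property only yields, for each $w$, a $\gmu^*$-full-measure set on which the conditional density of $X_w$ is a function of $(x_w,x_{N_w(G)})$, and you need these identities to hold at all the hybrid configurations $x^B$, which mix coordinates of $x$ and $x^*$. This requires a Fubini argument (for $\gmu^*$-a.e.\ $x^*$, the set of $x$ for which all $2^{|V|}$ hybrids land in the good set has full measure), not merely the intersection of finitely many full-measure sets in the variable $x^*$ alone. You correctly flag this as the delicate step and describe the right resolution, so I would not count it as a gap, but a fully rigorous version should spell out the Fubini step; the resulting factorization then holds $\gmu^*$-a.e., which is all that a statement about a density can mean.
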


We next formulate an analogue for a \mrft. 

\begin{proposition}[Second-order Hammersley-Clifford] \label{th:hammersleyclifford2}
Assume the graph $G=(V,E)$ is finite.
Assume $\gmu \in \P(\X^V)$ is absolutely continuous with respect to a product measure $\gmu^* = \prod_{v \in V}\glambda_v \in \P(\X^V)$ for
some $\glambda_v \in \P(\X)$, $v \in V$.  
Consider the following statements:
\begin{enumerate}[(1)]
\item $\gmu$ is a \mrft. 
\item The density of $\mu$ with respect to $\gmu^*$ factorizes in the form
  \begin{equation}
    \label{eq-mufactor}
\frac{d\gmu}{d\gmu^*}(x) = \prod_{K \in \rm{cl}_2(G)}f_K(x_K), \qquad x \in \X^V,
\end{equation}
for some measurable functions $f_K : \X^K \rightarrow \R_+$, for $K \in \rm{cl}_2(G)$.
\end{enumerate}
Then (2) implies (1). If also $d\gmu/d\gmu^*$ is strictly positive, then (1) implies (2).
\end{proposition}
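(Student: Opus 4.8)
The plan is to reduce this second-order statement to the already-established first-order Hammersley–Clifford theorem (Proposition \ref{th:hammersleyclifford}) applied to an auxiliary graph. First I would introduce the \emph{square graph} $\Gtil = (V,\Etil)$ on the same vertex set, declaring $(u,v) \in \Etil$ precisely when $0 < d(u,v) \le 2$ in $G$. Since $G$ is finite, $\Gtil$ is again a finite simple graph, and the product reference measure $\gmu^*$ is still available and unchanged. The crux is that the two notions appearing in the proposition --- being a \mrft\ on $G$ and having a density that factorizes over $\mathrm{cl}_2(G)$ --- coincide \emph{exactly} with the corresponding first-order notions on $\Gtil$, after which the result is immediate.

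To make this precise I would verify two elementary combinatorial identities (writing $d(u,A) := \min_{v \in A} d(u,v)$). (i) For every $A \subset V$ one has $\partial^2_G A = \partial_{\Gtil} A$: unwinding \eqref{bdaries}, $\partial_G A = \{u \notin A : d(u,A) = 1\}$ and $\partial_G(A \cup \partial_G A) = \{u \in V : d(u,A) = 2\}$, so $\partial^2_G A = \{u \in V \setminus A : 1 \le d(u,A) \le 2\}$, which is exactly $\partial_{\Gtil} A = \{u \in V \setminus A : \exists\, v \in A,\ d(u,v) \le 2\}$; in particular $(A \cup \partial^2_G A)^c = (A \cup \partial_{\Gtil} A)^c$ as well. (ii) $\mathrm{cl}_2(G) = \mathrm{cl}_1(\Gtil)$, since a set has $G$-diameter at most $2$ if and only if every pair of its distinct vertices is joined by an edge of $\Gtil$, i.e.\ the set is a clique of $\Gtil$.

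With these identities in hand the argument closes quickly. By (i), comparing the two requirements in Definition \ref{def-MRFs}, the measure $\gmu$ is a \mrft\ on $G$ if and only if it is a \mrfo\ on $\Gtil$, because in both cases the condition is that $\gX_A$ be conditionally independent of $\gX_{(A \cup \partial^2_G A)^c}$ given $\gX_{\partial^2_G A}$ for every (necessarily finite) $A \subset V$. By (ii), the factorization \eqref{eq-mufactor} over $\mathrm{cl}_2(G)$ is literally the first-order factorization over $\mathrm{cl}_1(\Gtil)$, with the same family of potentials $(f_K)$. It then suffices to invoke Proposition \ref{th:hammersleyclifford} for the finite graph $\Gtil$ and the pair $\gmu \ll \gmu^*$: the implication (2)$\Rightarrow$(1) there yields ``(2) implies (1)'' here, and, when $d\gmu/d\gmu^*$ is strictly positive, the implication (1)$\Rightarrow$(2) there yields ``(1) implies (2)'' here. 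I expect the only real obstacle to be conceptual rather than technical --- correctly recognizing $\Gtil$ as the right auxiliary object and checking that the two-step second boundary $\partial^2_G$ of \eqref{bdaries} reproduces the full distance-$\le 2$ shell, so that it coincides with the ordinary $\Gtil$-neighborhood; once (i) and (ii) are established, no further probabilistic work is required.
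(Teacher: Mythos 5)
Your proposal is correct and is essentially identical to the paper's own proof: the paper likewise introduces the square graph $G^2$ with edge set $\{(u,v): 1 \le d(u,v) \le 2\}$, observes that $\mathrm{cl}_2(G)=\mathrm{cl}_1(G^2)$ and $\partial_{G^2}A=\partial^2_G A$, and then invokes Proposition \ref{th:hammersleyclifford}. Your verification of the boundary identity is slightly more detailed than the paper's, but the argument is the same.
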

\begin{proof}
Define the \emph{square graph} $G^2 = (V,E')$ by connecting any two vertices of distance $2$. That is, let
\[
E' := \{(u,v) \in V^2 : 1 \le d(u,v) \le 2\},
\]
where $d$ is the graph distance on $G$.
It is straightforward to check the following properties:
\begin{enumerate}[(i)]
\item The $1$-cliques of $G^2$ are precisely the $2$-cliques of $G$. That is, $\mathrm{cl}_2(G)=\mathrm{cl}_1(G^2)$.
\item We have $\partial_{G^2}A = \partial^2_G A$ for any set $A \subset V$.
\end{enumerate}
It follows from (ii) that the statement (1) is equivalent to
\begin{enumerate}
\item[(1')] $\nu$ is a \mrfo \ relative to the graph $G^2$.
\end{enumerate}
On the other hand, it follows from (i) that (2) is equivalent to
\begin{enumerate}
\item[(2')] The density of $\gmu$ with respect to $\gmu^*$ factorizes in the form
\[
\frac{d\gmu}{d\gmu^*}(x) = \prod_{K \in \rm{cl}_1(G^2)}f_K(x_K), \qquad x \in \X^V,
\]
for some measurable functions $f_K : \X^K \rightarrow \R_+$, $K \in \rm{cl}_1(G^2)$. 
\end{enumerate}
The equivalence of (1') and (2') follows from Proposition \ref{th:hammersleyclifford}.
\end{proof}

The \mrft \  property is the more intuitive, but the second property of Proposition \ref{th:hammersleyclifford2} will be quite useful in the analysis as well. Hence, we give it a name.

\begin{definition} \label{def:2cliquefactorization}
  We say that $\gmu \in \P(\X^V)$ \emph{has a $2$-clique factorization with respect to $\gmu^*$} if the density $d\gmu/d\gmu^*$ can be written in the form \eqref{eq-mufactor}. 
\end{definition}

\begin{remark}   
  \label{rem-cutset} 
  For a finite graph $G=(V,E)$ and Polish space $\X$, the following \emph{cutset} characterization of \mrfo's on $\X^V$ is well known: An $\X^V$-valued  random element $(\gX_v)_{v \in V}$ is a \mrfo \   if and only if $\gX_A$ is conditionally independent of $\gX_B$ given $\gX_S$ for any disjoint sets $A,B,S \subset V$ with the property that every path starting in $A$ and ending in $B$ contains at least one vertex of $S$. 
Given  the correspondence between a \mrft \ on a graph and a \mrfo \ on the square graph
(established in the proof of Proposition \ref{th:hammersleyclifford2}), this is easily seen to  imply the following \emph{cutset} characterization of \mrft s: An $\X^V$-valued  random element $(\gX_v)_{v \in V}$ is a \mrft \   if and only if $\gX_A$ is conditionally independent of $\gX_B$ given $\gX_S$ for any disjoint sets $A,B,S \subset V$ with the property that every path starting in $A$ and ending in $B$ contains at least two adjacent vertices of $S$. 
\end{remark}

\subsection{Proof of the second-order Markov random field property for a finite graph}
 \label{ap-finite}

 We now present the proofs of Proposition \ref{prop-unique-finite}  and Theorem  \ref{pr:conditionalindependence-finitegraph}.
Throughout this section, we work with a fixed finite graph $G=(V,E)$ and 
consider the canonical measurable space $\C^V = (\C^V,\text{Borel})$, and 
let $(X_v)_{v \in V} : \C^V \rightarrow \C^V$ denote the canonical processes, that is, 
$X_v((x_u)_{u \in V}) = x_v$ for $x = (x_u)_{u \in V} \in \C^V$, for $v \in V$. 
Let $\mu_0, \mu_0^* \in \P((\R_d)^V)$ be as in Assumption (\ref{assumption:A}.1), and let
  $P^* = P^{*, \mu_0^*} \in \P(\C^V)$ 
 denote the law of the unique solution of the driftless SDE system \eqref{eq:canonical_law} starting from initial law $\mu_0^*$ (the well-posedness of which is given by Assumption (\ref{assumption:A}.3b)).  Recall that $\mu_0^*$ and thus $P^*$ are both product measures. 
 Then, recalling that $dX_v(t) = \sigma_v(t,X_v)\,dW_v(t)$ for $v \in V$, define the following
local martingale (under $P^*$):  
\begin{equation} 
  \label{martv}
M_v(t) := \int_0^t (\sigma_v\sigma_v^\top)^{-1}b_v(s,X_v,X_{N_v(G)}) \cdot dX_v(s), \qquad t \geq 0, 
\end{equation}
 where we use the shorthand notation $(\sigma_v\sigma_v^\top)^{-1}b_v(s, x_v, x_{N_v(G)})$ to denote the map
 \begin{equation}
   \label{sigmamap}
   \R_+ \times \C^V \ni (s,x) \mapsto (\sigma_v(s,x_v)\sigma_v^\top(s,x_v))^{-1}b_v(s, x_v, x_{N_v(G)}) \in \R^d.
   \end{equation}
Also,  given any continuous local martingale $M$, we let 
$\EE(M)$ denote the Doleans exponential: 
\begin{align}
\EE_t(M) = \exp\left( M (t)  - \frac12 [M] (t) \right), \qquad  t \geq 0,  \label{def:doleans-exponential}
\end{align}
where $[M]$ denotes the (optional) quadratic variation process of $M$.

Let  $(f_K)_{K \in \mathrm{cl}_2 (G)}$ be  as in Assumption (\ref{assumption:A}.1).
For each $t > 0$, define the measure $P_t \in \P(\C_t^V)$ by
\begin{align}
  \frac{dP_t}{dP^*_t} & := \dfrac{d\mu_0}{d\mu_0^*} (X_V(0)) \EE_t\left(\sum_{v \in V}M_v\right), \nonumber \\
 	&= \prod_{K \in \mathrm{cl}_2(G)}f_K(X_K(0))\,\prod_{v \in V}\EE_t(M_v), \label{P-factorization}
\end{align}
with $\EE(M)$ and  $P^*$ as defined in the previous paragraph.
Note that $W_v := \int_0^\cdot \sigma^{-1}_v(s,X_v)\,dX_v(s)$,  $v \in V,$ are independent $d$-dimensional Brownian motions  under $P^*$ by Remark \ref{rem-driftless}.
Therefore the stochastic exponentials appearing in \eqref{P-factorization} are true $P^*$-martingales due to the form
of $M_v$ in \eqref{martv}, the linear growth assumption (\ref{assumption:A}.2) on the drifts and the non-degeneracy of $\sigma_v$; 
see Lemma \ref{lem:Girsanov-justification} with $\QQ= P^{*}$, $X=(X_v)_{v \in V}$ and $f_v(t,x) = \sigma_v^{-1}b_v(t,x_v,x_{N_v(G)})$, $v \in V$.   
Further, observe that  $(M_v)_{v \in V}$ are orthogonal under  $P^*$.  
So Girsanov's theorem \cite[Corollary 3.5.2]{karatzas-shreve} implies that under $P_t$,
$\tilde{W}_v := W_v - \int_0^\cdot \sigma_v^{-1}(s, X_v)b_v (s, X_v, X_{N_v(G)})\,ds$, $v \in V$,  are independent $d$-dimensional standard Brownian motions  on $[0,t]$.  From this it follows that under $P_t$, 
  $X$ solves the SDE \eqref{fingraph-SDE} on $[0,t]$, and the same argument also shows that
the restriction to $[0,t]$ of any solution to \eqref{fingraph-SDE} must have law $P_t$ on $\C_t^V$.
Thus, we have uniqueness in law.  
Weak existence follows from Kolmogorov's extension theorem  on observing that $\{P_t, t \geq 0\}$ form a consistent family in the sense that  $P_s$ is the restriction of $P_t$ to $\C_s^V$
for each $t > s > 0$ (due to the martingale property of  $\frac{dP_t}{dP^*_t}$). 
  This completes the proof of Proposition \ref{prop-unique-finite}.

  On the other hand, the fact that for each $t > 0$, $(X_v[t])_{v \in V}$ is a 2MRF on $C_t^V$ 
  follows from \eqref{P-factorization} on applying  
  Proposition \ref{th:hammersleyclifford2} with ${\mathcal X} = \C_t$ and $\mu^* = P^*_t$, noting that $P^*_t$ is a product measure on $\C_t^V$ and that, for each $v \in V$,  $\{v\} \cup N_v(G)$ is a $2$-clique and $M_v$ of \eqref{martv} is $X_{\{v\} \cup N_v(G)}$-measurable.
This proves the first assertion of Theorem \ref{pr:conditionalindependence-finitegraph}.
For the second assertion of Theorem \ref{pr:conditionalindependence-finitegraph}, denote by $P = P^{\mu_0} \in \P(\C^V)$ the law of the unique solution of the SDE system \eqref{fingraph-SDE} with initial law $\mu_0$. 
Fix a finite set $A \subset V$ and bounded continuous functions $f,g,h$ on $\C^A,\C^{\partial^2 A}, \C^{V \setminus (A \cup \partial^2 A)}$, respectively.  
Fix $t > 0$ and let $\Gmc_t := \sigma\{ X_{\partial^2A}[t] \}$ and $\Gmc_\infty := \sigma\{ X_{\partial^2A} \}$.
Below, with some abuse of notation,  for any $B \subset V$, we will also interpret elements  $y \in \C_t^B$  as elements of  $\C^B$  by simply 
  setting $y(s) = y(t)$ for $s \geq t$. 
   Note that with this identification, for any $x \in \C^V$  and $B \subset V$, 
  $x_B[t] \rightarrow x_B$ in $\C^B$ as $t \rightarrow \infty$.
   Then,  noting that $\sigma(\cup_{t > 0}\G_t) = \G_\infty$, 
   invoking the  martingale convergence theorem (in the third equality below),  and using the fact that $P_t=P \circ (X_V[t])^{-1}$ is a \mrft\ on $\C_t^V$ for each $t$ (in the second equality below),  we have
\begin{align}
	& \Emb^P \left[ f(X_A) g(X_{\partial^2A}) h(X_{V \setminus (A\cup\partial^2A)}) \right] \notag \\
	& = \lim_{s \to \infty} \lim_{t \to \infty} \Emb^P \left[ f(X_A[s]) g(X_{\partial^2A}[t]) h(X_{V \setminus (A\cup\partial^2A)}[s]) \right] \notag \\
	& = \lim_{s \to \infty} \lim_{t \to \infty} \Emb^P \left[ \Emb^{P} \left[ f(X_A[s]) \mid \Gmc_t \right] g(X_{\partial^2A}[t]) \Emb^{P} \left[ h(X_{V \setminus (A\cup\partial^2A)}[s]) \mid \Gmc_t \right] \right] \notag \\
	& = \lim_{s \to \infty} \Emb^P \left[ \Emb^P \left[ f(X_A[s]) \mid \Gmc_\infty \right] g(X_{\partial^2A}) \Emb^P \left[ h(X_{V \setminus (A\cup\partial^2A)}[s]) \mid \Gmc_\infty \right] \right] \notag \\
	& = \Emb^P \left[ \Emb^P \left[ f(X_A) \mid \Gmc_\infty \right] g(X_{\partial^2A}) \Emb^P \left[ h(X_{V \setminus (A\cup\partial^2A)}) \mid \Gmc_\infty \right] \right], \label{eq:infinitetime}
\end{align}
where we have also made  repeated use of the boundedness and continuity of $f, g, h$ and the bounded convergence theorem. 
This shows that $X_A$ and $X_{V \setminus (A \cup \partial^2A)}$ are conditionally independent given $X_{\partial^2A}$ under $P$, that is, $P$ is a \mrft \ on $\C^V$.
This completes the proof.
\hfill \qedsymbol

\subsection{Illustrative examples}
\label{subs-ceg}

We now provide examples to show that the \mrft \  property cannot in general be strengthened. 

\subsubsection{The failure of the first-order MRF property for trajectories} \label{se:example-firstorder}

In general, $P_t$  fails to be a first-order Markov random field on $\C^V_t$ for any $t > 0$, even if the initial states are i.i.d. To see why, notice that the density $dP_t/dP^*_t$ given by \eqref{P-factorization} does not in general admit a clique factorization. Indeed, for $v \in V$ and $t > 0$, we recall the definition of $M_v$ from \eqref{martv} and $\EE_t(M_v)$ from
\eqref{def:doleans-exponential}, which we write in full as
\begin{align*}
\EE_t(M_v) = \exp\Bigg(&\int_0^t(\sigma_v\sigma_v^\top)^{-1}b_v(s,X_v,X_{N_v(G)})\cdot dX_v(s)  \\
	&- \frac12\int_0^t\left\langle b_v(s,X_v,X_{N_v(G)}),\,(\sigma_v\sigma_v^\top)^{-1}b_v(s,X_v,X_{N_v(G)}) \right\rangle ds\Bigg).
\end{align*}
Noting that $\{v\} \cup N_v(G)$ is a $2$-clique but not a $1$-clique, this reveals why one cannot hope for a factorization over $1$-cliques. For example, consider the ``nice" case where $\sigma_v \equiv I$ and $b_v(s,x_v,x_{N_v(G)}) = \sum_{u \in N_v(G)}(x_u(s)-x_v(s))$.
(Equivalently, $b_v(s,x_v,x_{N_v(G)}) = \nabla_{x_v}h(x)$ is of gradient-type with potential $h(x) = -\frac12\sum_{(u,v) \in E}|x_u-x_v|^2$, where $E$ is the edge set of $G$.)  
Then the first term in the above exponential splits nicely into a sum of pairwise interactions $\sum_{u \in N_v(G)}\int_0^t(X_u(s)-X_v(s)) \cdot dX_v(s)$, but the second term becomes
\[
-\frac12\sum_{u,w \in N_v(G)}\int_0^t\lan X_u(s)-X_v(s), \, X_w(s)-X_v(s)\ran \,ds.
\]
It is this term which fails to factorize further over $1$-cliques as opposed to $2$-cliques and thus precludes the first-order Markov property whenever $d\mu_0/d\mu_0^*$ is strictly positive due to Proposition \ref{th:hammersleyclifford}.

To informally provide a different  (but arguably more intuitive)  perspective on why the first-order Markov property for past histories fails,
consider the case when $G$ is a line segment of
length $\ell = 3$, labelling the vertices $-1, 0, 1$.
Then, although the driving Brownian motions are all independent and
the dynamics of each of the two extreme vertices only depend on its own state and the state of the center vertex, at any
time $t$, conditioning on the past history of the states of the center vertex, does not make  $X_{-1}(t)$
independent of $X_1(t)$ because the conditioning correlates the Brownian motions $W_{-1}$ and $W_1$ on the interval
$[0,t]$.  This happens because the past history of $X_0$ is influenced by both $W_{-1}$ and $W_1$ via $X_{-1}$ and $X_1$.
On the other hand, to see why the \mrft \ property nevertheless does hold,
note that if $G$ were a line segment of length $4$, labeling the vertices
$\{-2,-1,1,2\}$, then conditioning on the history of the states of the two center vertices $-1$ and $1$ no longer correlates the 
Brownian motions $W_{-2}$ and $W_2$ 
since the dynamics of each of  the conditioned vertices depends  on a different
driving Brownian motion.   Thus, although the conditioning changes the distribution of $W_{-2}$ and $W_2$ (for instance, they need no
longer be Brownian motions), 
they remain independent, and hence $X_{-2}(t)$ is conditionally independent of $X_2(t)$ in this case.

\begin{remark}
There are certain situations in which $P_t$ is, in fact, a \mrfo for each $t > 0$ (even though we know from the above examples that this is not in general the case). For example, suppose that for every $v \in V$, 
there exists a clique $K_v$ of $G$ with $v \in K_v \subset N_v(G)$ such that $b_v(t,x_v,x_{N_v(G)})=\widetilde{b}_v(t,x_v,x_{K_v})$ depends on $x_{N_v(G)}$ only through $x_{K_v}$. Suppose also that $d\mu_0/d\mu_0^*$ admits a $1$-clique factorization. Then, recalling \eqref{P-factorization}, note that for each $v$ the martingale $M_v$ is measurable with respect to $X_{K_v}$, and deduce from Proposition \ref{th:hammersleyclifford} that $P_t$ is a first-order Markov random field.
For a concrete example that has the above form,
consider the case when 
$G$ is a triangular lattice with $V = \{\o, 0, 1, \ldots, m\}$, for some $m \in \N$,  
with the central vertex ${\o}$ having the neighborhood $N_{\o}(G) = \{0, \ldots, m\}$ and 
for each $v \in V \setminus \{\o\}$,  $N_v(G) := \{{\o}, v+1, v-1\}$, where the vertices are to be interpreted
mod $m+1$.  
Further, suppose the initial conditions are i.i.d.  and  that for some $c \in \R$,
$b_v (t, x_v, x_{N_v(G)}) = c(x_{{\o}} + x_{v+1})$ for $v \in V \setminus \{\o\}$ and $b_{\o} (t, x_{\o}, x_{N_{\o}(G)}) = cx_{{\o}}$. 
Then  this provides a specific example with $K_v = \{\o,v+1\} \subset N_v(G)$ for $v \in V \setminus \{\o\}$ and $K_{\o}=\emptyset$.
In a similar spirit,  the directed cycle graph  model of \cite{DetFouIch18} provides another example.
\end{remark}

\subsubsection{The failure of MRF properties for time-$t$ marginals} \label{se:example-marginals}

It is natural to wonder if and when the time-$t$ marginals $P_t \circ X(t)^{-1} \in \P((\R^d)^V)$ remain a first- or second-order
Markov random field, given that this property is true at time $0$, or even given i.i.d.\ initial conditions.
 This question is related to propagation of Gibbsianness and 
 Gibbs-non-Gibbs transitions that have been studied in the literature,  which is discussed in greater detail in
Remark \ref{rem-gibbsprop}.

  Here,  we provide a simple example where  both the first-order and  second-order Markov property  fail for time-$t$ marginals. In fact, in this simple model we will see that there is no non-trivial conditional independence structure. 
Consider the segment with $5$ vertices:  $G=(V,E)$ given by $V=\{1,2,3,4,5\}$ and $E = \{(i,i+1) : i=1,2,3,4\}$, and consider the SDE system
\begin{align}
\begin{split}
	dX_1(t) & = (X_2(t)-2X_1(t))\,dt+dW_1(t), \\
	dX_i(t) & = (X_{i-1}(t)+X_{i+1}(t) - 2X_i(t))\,dt+dW_i(t), \qquad i=2,3,4, \\
	dX_5(t) & = (X_4(t)-2X_5(t))\,dt+dW_5(t),
\end{split}
\label{def:ex:SDE}
\end{align}
with $X_i(0)=0$ for each $i$. Once again, note that the drift here is of gradient type with potential $h(x) =\sum_{i=1}^4 x_i x_{i+1}-\sum_{i=1}^5 x_i^2$. 
Letting $\bm{X}(t)$ denote the column vector $(X_1(t),\ldots,X_5(t))$ and similarly for $\bm{W}(t)$, we may write this in vector form as
\begin{align}
d\bm{X}(t) = L\bm{X}(t)\,dt + d\bm{W}(t), \label{def:multi-OU}
\end{align}
where $L = A - 2I$ is the adjacency matrix $A$ of the graph minus twice the identity $I$: 
\begin{equation*}
	L = \begin{pmatrix}
	-2 & 1 & 0 & 0 & 0 \\
	1 & -2 & 1 & 0 & 0 \\
	0 & 1 & -2 & 1 & 0 \\
	0 & 0 & 1 & -2 & 1 \\
	0 & 0 & 0 & 1 & -2 
	\end{pmatrix}.
\end{equation*}
The solution of the SDE \eqref{def:multi-OU} is given by
\begin{align*}
\bm{X}(t) = e^{Lt} \int_0^t e^{-Ls} \,d\bm{W}(s), \qquad t > 0.
\end{align*}
Noting that $L$ is symmetric and invertible, we deduce that $\bm{X}(t)$ is jointly Gaussian with mean zero and covariance matrix 
\begin{align}
\E[\bm{X}(t)\bm{X}(t)^\top] = \int_0^t e^{2Ls} \,ds = \frac12 L^{-1}(e^{2Lt}-I). \label{def:multi-OU-cov}
\end{align}
This covariance matrix can easily be computed explicitly by noting that the tridiagonal Toeplitz matrix $A$ is explicitly diagonalizable. To spare the reader any tedium, we provide only some pertinent snapshots. At time $t=2$ the covariance matrix is
\begin{align}
\E[\bm{X}(2)\bm{X}(2)^\top] = \begin{pmatrix}
0.3611 & 0.2388 & 0.1435 & 0.0767 & 0.0324 \\
0.2388 & 0.5046 & 0.3156 & 0.1759 & 0.0767 \\
0.1435 & 0.3156 & 0.5370 & 0.3156 & 0.1435 \\
0.0767 & 0.1759 & 0.3156 & 0.5046 & 0.2388 \\
0.0324 & 0.0767 & 0.1435 & 0.2388 & 0.3611
	\end{pmatrix}. \label{def:ex-covmatrix}
\end{align}
Using the well known formula for conditional measures of joint Gaussians, we compute from this that
\begin{align*}
\mathrm{Cov}(X_1(t),X_3(t)|X_2(t))=
\begin{pmatrix}
0.2481 & -0.0058 \\
-0.0058 & 0.3397
\end{pmatrix},
\end{align*}
which reveals that $X_1(t)$ and $X_3(t)$ are not conditionally independent given $X_2(t)$. Hence, $(X_i(t))_{i \in G}$ is not a first-order Markov random field.
Similarly, by computing
\begin{align*}
\mathrm{Cov}(X_1(t),X_4(t)|X_2(t),X_3(t))=
\begin{pmatrix}
0.2480 & -0.0030 \\
-0.0030 & 0.3189
\end{pmatrix},
\end{align*}
we see that $X_1(t)$ and $X_4(t)$ are not conditionally independent given $(X_2(t),X_3(t))$. Hence, $(X_i(t))_{i \in G}$ is not a second-order Markov random field.

In fact, in this example, there is no non-trivial conditional independence structure, in the sense that there are no two vertices $i,j$ such that $X_i(t)$ and $X_j(t)$ are conditionally independent given $\{X_k(t) : k \in G\setminus\{i,j\}\}$ for some $t > 0$.
This can be read off from the the so-called \emph{precision matrix}, which is simply the inverse of the covariance matrix, $Q(t) := (\E[\bm{X}(t)\bm{X}(t)^{\top}])^{-1}$.
As is well known and can easily be seen from the form of the multivariate Gaussian density, the precision matrix reveals the conditional independence structure (see, e.g., \cite[Proposition 5.2]{lauritzen1996graphical}), in the following sense: For $t > 0$ define the graph $\widetilde{G}(t) = (V,\widetilde{E}(t))$ with the same vertex set $V$ but with $(i,j) \in \widetilde{E}(t)$ if and only if $Q_{i,j}(t)  \neq 0$. Then $\bm{X}(t)$ is a (first-order) Markov random field with respect to the graph $\widetilde{G}(t)$. In our example, $\widetilde{G}(t)$ is the complete graph for each $t > 0$, and this Markov property is vacuous. (Note, however, that $Q(t) \to 2L$ as $t\to\infty$ because $L$ is negative definite, and the unique invariant measure of this diffusion is a first-order Markov random field with respect to the original graph $G$.)

A variation on this example gives rise to another interesting phenomenon. Suppose we modify the example by replacing the diagonal entries of $L$ with zeros, i.e., remove all the $-2X$ terms from the drifts in \eqref{def:ex:SDE}. Then the covariance matrix is again invertible, and now $Q_{1,4}(t)=Q_{2,5}(t)=0$ for all $t > 0$, where we continue with the notation of the previous paragraph. That is, $\widetilde{G}(t)$ is not the complete graph, but rather the complete graph with the edges $(1,4)$ and $(2,5)$ removed, for each $t > 0$. In particular, $X_1(t)$ and $X_4(t)$ are conditionally independent given $(X_2(t),X_3(t),X_5(t))$, for each $t > 0$.

\begin{remark}
  \label{rem-gibbsprop}  
    As mentioned in the introduction, one motivation for studying such conditional independence questions is that (a stronger version of) the   
  MRF structure of interacting SDEs  can lead to a more succinct autonomous ``local characterization" of 
  the dynamics at a vertex and its neighborhood, as developed in the quite different setting of unimodular
  Galton-Watson trees in  \cite{LacRamWu-original,LacRamWu20diffusion}.   From this perspective,
  it would be of interest to investigate if there are  non-trivial special cases   when
  the  first-order or second-order MRF property for time-$t$ marginals propagates.
  A different but related question that has been studied in the literature is  propagation of Gibbsianness for 
  an infinite system of interacting real-valued  diffusions indexed by $\Z^d$. 
  Specifically, the work  \cite{DerRoe05} considers a collection of interacting diffusions, 
  indexed by $\Z^d$, with
  identity covariance and  a  drift that  is the gradient of a Hamiltonian function
  associated with a certain interaction
  potential $\Phi$, and with an initial distribution that is also a
  Gibbs measure (as in Section \ref{se:gibbs})
  with respect to a Gibbsian specification (in the sense of \cite[Chapter 2]{georgii2011gibbs})
  associated with another  interaction potential $\Phi_0$, where
  both interaction potentials $\Phi$ and $\Phi_0$   are  assumed to be of finite range and 
  satisfy certain smoothness conditions. 
  It is shown in \cite{DerRoe05} that when  either $t$ or the interaction strength is sufficiently small,
  the time-$t$ marginals are strongly Gibbsian, that is, associated with Gibbsian specifications that have 
  an absolutely summable, though not necessarily  finite range, interaction potential. 
  Extensions of these results to the case of interacting real-valued diffusions on $\Z^d$ 
  with non-Markovian drifts with finite memory (again with finite range interactions and identity covariance)
  were later  obtained in \cite{RedRoeRus10} and \cite{RoeRus14}.
  The restrictions on the time and interaction strength in these works arise from the fact that
  perturbative arguments are used. 
  However, in general for moderate interaction strengths and moderate times, the time-$t$ marginals  
can fail to be Gibbsian   (see, e.g., \cite{van2009gibbsianness}, as well as the survey \cite{van2010gibbs}, which also discusses related results for spin systems). 
\end{remark}

\section{Finite-graph approximations for Markov random fields} \label{se:2MRFs}

In this section we establish some important preparatory results that are used in the proof of Theorem 
\ref{th:conditionalindependence-infinitegraph}, which extends the finite graph results of
Theorem \ref{pr:conditionalindependence-finitegraph} to the infinite graph setting. 
Fix $(G, b, \sigma, \mu_0)$ that satisfy Assumption \ref{assumption:B} and suppose $G = (V,E)$ is  countably infinite.
Recall  that
$P =  P^{\mu_0} \in \P(\C^V)$ and $P^{*,\mu_0} \in \P(\C^V)$ 
denote the unique law of the SDE systems \eqref{fingraph-SDE} and \eqref{eq:canonical_law}, respectively, 
both with initial laws $\mu_0$, which are well-posed by Assumptions (\ref{assumption:B}.4) and  (\ref{assumption:B}.3). 
To show that    $P_t = P^{\mu_0}_t$  forms a \mrft \  on  $\C_t^V$, we can no longer
apply the clique factorization arguments used for finite graphs because the formula \eqref{P-factorization} does
not extend to infinite graphs.  Even worse, the density $dP_t/dP^*_t$ therein does not exist, and it seems impossible to establish directly
that by projecting to a finite set $A \subset V$ we have a density $dP_t[A]/dP^*_t[A]$ that admits
a 2-clique factorization.  
Instead, we  approximate the measure on the infinite graph by \mrft s on a  growing sequence of finite graphs,
arguing that the desired \mrft \  property  passes to the limit.
To highlight some of the subtleties that arise in such an approximation argument,
and to better motivate the other results established in this section,  
we first desribe the approximating sequence of measures  in Section \ref{subs-approxmeas}.
Then in the subsequent two sections we establish some general properties of finite-graph \mrft s to be
 used in the proof of Theorem \ref{th:conditionalindependence-infinitegraph} in Section \ref{subs-pf-infingraph}, 
which  are also of independent interest.

\subsection{Construction of the approximating sequence of SDEs}
\label{subs-approxmeas}

We fix $(G,b,\sigma, \mu_0)$ that satisfy Assumption \ref{assumption:B}.  
As in Section \ref{ap-finite}, we will
work with the canonical measure space $\C^V = (\C^V,\text{Borel}, P^{*,\mu_0})$, and 
let $(X_v)_{v \in V} : \C^V \rightarrow \C^V$ again denote the canonical processes.
Also, recall from Section \ref{subs-not} that given any measurable
space $\X$,  measure $\mu \in \P( \X^V)$ and subset  $U \subset V$,
$\mu[U] \in \P(\X^U)$ denotes the restriction of $\mu$ to the set $\X^U$.

Let $V_n \subset V, n \in \N,$ be such that $\bigcup_{n} V_n = V$, and let  $G_n = (V_n, E_n)$, for some  edge set $E_n$ to be specified later.
Also, for each $n \in \N$ and $v \in V_n$, let 
 $b^n_v: \R_+ \times \C \times \C^{N_v(G_n)} \mapsto \R^d$ be any progressively measurable map
that    satisfies the same conditions as $b_v$ in Assumption (\ref{assumption:B}.2).    
Fix $t > 0$ and for each $n$, define $P_t^n = P_t^{\mu_0,n} \in \P(\C_t^V)$ by
\begin{align}
  \label{dPdPstar}
\frac{dP^n_t}{dP^{*,\mu_0}_t} = \frac{dP_t^{\mu_0,n}}{dP^{*,\mu_0}_t}= \prod_{v \in V_n}\EE_t\left(\int_0^\cdot (\sigma_v\sigma_v^\top)^{-1}b^n_v(s,X_v,X_{N_v(G_n)}) \cdot dX_v(s)\right), 
\end{align}
where, as before,  $(\sigma_v \sigma_v^\top)^{-1} b^n_v(s,x,x_{N_v(G_n)})$ denotes the map \eqref{sigmamap}. 
We can apply Lemma \ref{lem:Girsanov-justification} with $\QQ= P^{*,\mu_0}$, $X=(X_v)_{v \in V_n}$
and $f(t,x) = (\sigma_v^{-1}b^n_v(t,x_v,x_{N_v(G_n)}))_{v \in V_n}$,  to conclude that 
the stochastic exponential  in \eqref{dPdPstar} is a true  $P^{*,\mu_0}$-martingale, due to the linear
growth, non-degeneracy and  boundedness properties of $b^n_v$ and $\sigma_v$ in Assumptions (\ref{assumption:B}.2) and
(\ref{assumption:B}.3).   
Hence, the family $(P^n_t)_{t > 0}$ is consistent in the sense that the restriction of $P^n_t$ to $\C_s^V$ is precisely $P^n_s$ for each $t > s > 0$. Thus, by the Kolmogorov extension theorem, $(P^n_t)_{t > 0}$ uniquely determines a probability measure $P^n$ on $\C^V$.
Now, from \eqref{dPdPstar},  \eqref{eq:canonical_law} and Girsanov's theorem
 \cite[Corollary 3.5.2]{karatzas-shreve}, it follows that under $P^n$ the canonical process solves the SDE system 
\begin{equation}
  \label{fingraph-SDEn} 
\begin{array}{rcl}
dX_v(t) &=& b^n_v(t,X_v,X_{N_v(G_n)})\,dt + \sigma_v(t,X_v)\,dW_v(t), \quad v \in V_n,  \\
dX_v(t) &=& \sigma_v(t,X_v)\,dW_v(t), \quad v \in V \setminus V_n,
\end{array}
\end{equation}
with $(X_v(0))_{v \in V} \sim \mu_0$,  where $(W_v)_{v \in V}$ are independent Brownian motions under $P^n$. 
Note that for $v \in V_n$,  the third argument of $b^n_v$ 
looks only at the states in $N_v(G_n)$, and thus
 $b^n_v$ depends only on the states of vertices in $G_n$.
Thus, $P^n[V_n]$ is precisely the law of the finite-graph SDE system \eqref{fingraph-SDE} with inputs $(G,(b^n_v)_{v \in V},(\sigma_v)_{v \in V},\mu_0[V_n])$.

In order to implement our approximation argument we would like   
to choose $G_n$ and $(b^n_v)_{v \in V}$ such that both $P^n \to P$ and  each $P^n_t[V_n]$ is a \mrft. 
In order to have $P^n \to P$ we should naturally choose $V_n$ increasing to $V$ and $b^n_v$ to behave like $b_v$ for most $v$. 
But the \mrft \ property is more delicate. It would follow from the finite-graph result of Theorem \ref{pr:conditionalindependence-finitegraph} that $P^n_t[V_n]$ is a \mrft \ on $\C^{V_n}$ 
only if $\mu_0[V_n]$ were a \mrft \  on $(\R^d)^{V_n}$.   But  $\mu_0[V_n]$ is not necessarily
a \mrft \  for 
arbitrary  $V_n$ (e.g., with  $G_n$  the induced subgraph),
 even though  $\mu_0$ is a \mrft \ on the full graph $G$ by assumption; 
in other words, the \mrft \ property is not in general preserved under projections, as illustrated in Example \ref{eg-fingraph} below. 
 However, in  Section \ref{se:MRFs-projections}  
we show that for any Markov random field on an infinite
graph $G = (V,E)$, it is possible to identify  a suitable increasing sequence of vertices $(V_n)_{n \in \N}$ and
associated  graph $G_n = (V_n,E_n)$ for each $n \in \N$  that is a slight modification of the induced subgraph on $V_n$, such that
the desired projection property holds.
Then, in Section \ref{se:conditional distributions of 2MRFS} we prove some results on preservation of  a class of
conditional distributions of \mrft s  under restriction to induced subgraphs. 
The above results are 
combined with tightness and convergence estimates for the approximating sequence $\{P^n\}$ obtained 
in Section \ref{subs-tightness} to  complete the proof of Theorem \ref{th:conditionalindependence-infinitegraph}
in  Section \ref{subs-pf-infingraph}.

\subsection{Projections of Markov random fields} \label{se:MRFs-projections}

We first provide a simple example to illustrate that the restriction of an MRF to an induced subgraph need not remain an MRF.

\begin{example}
  \label{eg-fingraph}
  Suppose $G$ is a finite two-dimensional lattice, with vertex set $V$ identified with $\{-n,\ldots,n\}^2$ and the usual nearest-neighbor edge set, and let $(\gX_v)_{v \in G}$ be a \mrfo \  on $\R^V$. 
  Consider the line subgraph  $H=\{(i,0)  : i=-n,\ldots,n\}$ in $G$, and consider the restriction  $(\gX_v)_{v \in H}$ of the \mrfo \   to $H$. 
  Note that every path in $H$ that starts in $A := \{(0,0)\}$ and ends in $B = \{ (2,0)\}$ must
  traverse through the vertex of $S:= \{(1,0)\}$.  Thus, by the cutset characterization of \mrfo's given in Remark \ref{rem-cutset},
  for $(\gX_v)_{v \in H}$ to be an  \mrfo \  on $H$,  $\gX_{(0,0)}$ must be conditionally independent of  $\gX_{(2,0)}$  given $\gX_{(1,0)}$.
  However, by the same cutset characterization,  it is clear that this conditional independence
  cannot be deduced from the \mrfo \  property of $(\gX_v)_{v \in G}$ on $G$ since   there are  paths in $G$ that start in $A$
  and end in $B$ that are disjoint from $S$. 
  Similarly, if we assume $(\gX_v)_{v \in G}$ is a \mrft, the configuration $(\gX_v)_{v \in H}$ can fail to be a \mrft.
\end{example}
    
This example does suggest, however, that we can restore the MRF property by enlarging the edge set of the induced subgraph to reflect the lost connectivity. The following lemma gives one way to do this which is certainly not the only way, but it serves our purpose.
For a random element $(\gX_v)_{v \in V}$ of $\X^V$ with law $\gmu \in \P(\X^V)$, and for a set $A \subset V$, recall that we write 
$\gmu[A]$ to denote the law of $\gX_A$, the coordinates in $A$.

\begin{lemma} \label{le:MRFprojections}
Fix a rooted graph $G=(V,E,\o)$ and $n \ge 4$. Define $V_n := \{v \in V : d(v,\o) \le n\}$ and $U_n := V_n \setminus V_{n-2}$, where $d$ denotes the graph distance.
Define a graph $G_n=(V_n,E_n)$, where
\[
E_n := \{(u,v) \in V_n \times V_n : (u,v) \in E\} \cup \{(u,v) \in U_n \times U_n, u \neq v \}.
\]
\begin{enumerate}[(i)]
\item For any $A \subset V_{n-3}$, it holds that $\partial^2_G A = \partial^2_{G_n} A$.
  Also, for any $A' \subset V_{n-2}$, $\partial^2_G A' \subset \partial^2_{G_n} A'$.
\item  If $K \in \mathrm{cl}_2(G)$ satisfies $K \subset V_n$, then $K \in \mathrm{cl}_2(G_n)$.
\item If a $\X^V$-valued random variable $(\gX_v)_{v \in V}$ is a \mrft \  with respect to  $G$, then $(\gX_v)_{v \in V_n}$ is a \mrft \  with respect to $G_n$.
\item Suppose $V$ is finite and the law $\gmu$ of $\gX^V$ admits the following $2$-clique factorization with respect to a product measure $\gmu^* = \prod_{v \in V}\glambda_v \in \P(\X^V)$ for some $\glambda_v \in \P(\X)$, 
\[
\frac{d\gmu}{d\gmu^*}(x_V) = \prod_{K \in \mathrm{cl}_2(G)}f_K(x_K),
\]
for some measurable functions $f_K : \X^K \rightarrow \R_+$, for $K \in \mathrm{cl}_2(G)$. Then $\gmu[V_n]$ admits a $2$-clique factorization
\[
\frac{d\gmu[V_n]}{d\gmu^*[V_n]}(x_{V_n}) = \prod_{K \in \mathrm{cl}_2(G_n)}f^0_K(x_K),
\]
for some measurable functions $f^0_K : \X^K \rightarrow \R_+$, for $K \in \mathrm{cl}_2(G_n)$, which additionally satisfy the consistency condition $f^0_K \equiv f_K$ for $K \in \mathrm{cl}_2(G)$ such that $K \subset V_{n-3}$. 
\end{enumerate}
\end{lemma}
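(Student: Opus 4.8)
The plan is to treat the four parts in order, with (iii) as the crux and (i), (ii), (iv) as supporting distance and integration computations. The organizing observation throughout is that the only edges of $G_n$ absent from $G$ are those \emph{added} inside $U_n = V_n \setminus V_{n-2}$, i.e.\ between vertices at root-distance $n-1$ or $n$.

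For (i) and (ii), I would argue by graph distance. Note that $\partial^2_G A$ (resp.\ $\partial^2_{G_n}A$) is exactly the set of vertices at $G$- (resp.\ $G_n$-) distance $1$ or $2$ from $A$. If $A \subset V_{n-3}$, any path of length $\le 2$ starting in $A$ visits only vertices of root-distance $\le n-1$, and its single interior vertex has root-distance $\le n-2$; since an added edge requires both endpoints in $U_n$ (root-distance $\ge n-1$), such a path can neither traverse an added edge nor be shortcut by one, so the two distance-$\le 2$ neighbourhoods coincide and $\partial^2_G A = \partial^2_{G_n}A$. For $A' \subset V_{n-2}$, every $G$-geodesic of length $\le 2$ out of $A'$ stays inside $V_n$ and hence persists in $G_n$, giving $\partial^2_G A' \subset \partial^2_{G_n}A'$. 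For (ii), take $u,v \in K$ with $d_G(u,v)\le 2$: if they are $G$-adjacent the edge survives in $G_n$; if $d_G(u,v)=2$ through some $w$, then either $w \in V_n$ and the length-two path survives, or $w \notin V_n$, which forces $u,v \in U_n$ and hence $(u,v)\in E_n$ by the added clique. In all cases $d_{G_n}(u,v)\le 2$, so $K \in \mathrm{cl}_2(G_n)$.

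For (iii), I would pass to the square graph as in the proof of Proposition \ref{th:hammersleyclifford2}: $\gmu$ is a \mrft\ on $G$ iff it is a \mrfo\ on $G^2$, and $\gmu[V_n]$ is a \mrft\ on $G_n$ iff it is a \mrfo\ on $(G_n)^2$. I would then invoke the standard marginalization property that the restriction of a \mrfo\ to a vertex subset $W$ is a \mrfo\ on the \emph{marginal graph} on $W$, in which two vertices are joined whenever they are adjacent or are connected by a path whose interior lies in $W^c$; since a \mrfo\ with respect to a graph is a fortiori a \mrfo\ with respect to any graph with more edges, it suffices to show that this marginal graph, for $W=V_n$ and $H=G^2$, is contained in $(G_n)^2$. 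This is the combinatorial heart: a direct $G^2$-edge inside $V_n$ lies in $(G_n)^2$ by the computation in (ii), while if $u,v \in V_n$ are joined by a $G^2$-path whose interior lies outside $V_n$, then the first interior vertex has root-distance $\ge n+1$ and is within $G$-distance $2$ of $u$, forcing $u \in U_n$, and likewise $v \in U_n$, so that $(u,v)\in E_n$. Monotonicity in the edge set then yields the claim. The analytic point in the infinite setting is to justify the restriction step using only \emph{finite} separators; this is ensured by local finiteness together with the separation $\gX_{V_{n-2}}$ conditionally independent of $\gX_{V\setminus V_n}$ given $\gX_{U_n}$, which follows by applying the \mrft\ definition to the finite ball $V_{n-2}$, whose second boundary $\partial^2_G V_{n-2}$ is precisely $U_n$.

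For (iv), with $V$ finite I would compute the marginal density by integrating out the coordinates outside $V_n$ against the product reference measure: since $\gmu^*$ is a product, $\frac{d\gmu[V_n]}{d\gmu^*[V_n]}(x_{V_n})=\int \prod_{K \in \mathrm{cl}_2(G)} f_K(x_K)\,\prod_{v\notin V_n}\glambda_v(dx_v)$. Splitting the product into cliques $K \subset V_n$ (which pull out of the integral and, by (ii), are $2$-cliques of $G_n$) and cliques meeting $V\setminus V_n$, the remaining integral is a single function $g$ of the coordinates $x_w$ with $w$ ranging over the $V_n$-vertices that share a $2$-clique with some outside vertex; any such $w$ has $d_G(w,z)\le 2$ for some $z$ of root-distance $\ge n+1$, hence root-distance $\ge n-1$, so $w \in U_n$. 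As $U_n$ is a clique of $G_n$, $g$ may be assigned to the $2$-clique $U_n$, giving the factorization with $f^0_K:=f_K$ for $K \in \mathrm{cl}_2(G)$ with $K\subset V_n$, $f^0_{U_n}:=g$ (combining with any coincident factor), and $f^0_K\equiv 1$ otherwise; the consistency $f^0_K\equiv f_K$ for $K\subset V_{n-3}$ holds since such cliques are retained and are disjoint from $U_n$. I expect the main obstacle to be (iii): the combinatorial embedding of the marginal graph into $(G_n)^2$ is clean, but rigorously reducing the conditional-independence statements on the infinite graph $G$ to the finite separators supplied by the \mrft\ property—where $V\setminus V_n$ and its components may be infinite—is where the care lies, whereas (i), (ii), (iv) are routine distance and integration computations.
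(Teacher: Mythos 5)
Your treatments of (i), (ii) and (iv) are correct and essentially the same as the paper's: (i) and (ii) are the same distance bookkeeping (the paper phrases (i) as two one-step boundary identities and iterates, and proves (ii) via $d_{G_n}\le d_G$ on $V_n\times V_n$), and (iv) is verbatim the paper's argument of integrating out the exterior coordinates, noting that every $2$-clique of $G$ meeting $V\setminus V_n$ intersects $V_n$ only in $U_n$, and absorbing the resulting integral into the clique $U_n\in\mathrm{cl}_2(G_n)$.

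Part (iii) is where you diverge, and there is a genuine gap. You reduce to showing that the restriction of a \mrfo\ (on $G^2$) to $V_n$ is a \mrfo\ on the \emph{marginal graph}, and you call this marginalization property ``standard.'' It is standard only when the exterior-connected closure of the set $A$ is finite (one applies the Markov property to $A\cup C$, where $C$ is the set of exterior vertices reachable from $A$ through the exterior); here $V\setminus V_n$ is infinite, $C$ can be infinite when $A$ meets $U_n$, and the paper's Definition \ref{def-MRFs} only quantifies over \emph{finite} sets. Moreover, the statement you are trying to establish is strictly stronger than what is needed: the marginal graph of $G^2$ on $V_n$ need not contain $U_n$ as a clique (two vertices of $U_n$ in distant ``branches'' of $G$ are not joined by any exterior path), so a \mrfo\ on the marginal graph asserts conditional independences, with both $A$ and its complementary block meeting $U_n$, that do not follow from the finite-set Markov property of the original field --- e.g.\ on $G=\Z$ it would require $\gX_n \perp \gX_{\{-n,\dots,n-3\}}$ given $\gX_{\{n-1,n-2\}}$, a one-sided statement that the two-sided finite-set property does not deliver. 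The separation $\gX_{V_{n-2}} \perp \gX_{V\setminus V_n}$ given $\gX_{U_n}$ that you invoke is true (indeed $\partial^2_G V_{n-2}=U_n$), but you do not show, and it is not apparent, how it yields the marginalization onto the marginal graph.

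The paper avoids all of this by aiming only at the \mrft\ property with respect to $G_n$ and exploiting that $U_n$ \emph{is} a clique of $G_n$: for $A\subset V_n$ with $S=\partial^2_{G_n}A$ and $B=V_n\setminus(A\cup S)$, the sets $A$ and $B$ cannot both meet $U_n$ (else $d_{G_n}(A,B)\le 1$), so at least one of them lies in $V_{n-2}$; one then applies the original \mrft\ property of $(\gX_v)_{v\in V}$ to that (finite) set, using part (i) to see its $G$-double-boundary is contained in $S$, and concludes by the elementary weakening rule that $Z_1\perp(Z_2,Z_3)\mid Z_4$ implies $Z_1\perp Z_2\mid(Z_3,Z_4)$. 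Your combinatorial embedding of the marginal graph into $(G_n)^2$ is correct but does not repair the missing marginalization step; replacing that step with the two-case argument above is what closes the proof.
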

\begin{proof} {\ }

\begin{enumerate}[(i)]
\item From the definition of $E_n$ it follows quickly that (a) for $A' \subset V_{n-2}$, $\partial_G A' = \partial_{G_n} A'$, and
  (b) for $A'' \subset V_{n-1}$, $\partial_G A'' \subset \partial_{G_n} A''$.
  Iterate these observations to prove the claims. 
\item Let $d_G$ and $d_{G_n}$ denote the graph distance in $G$ and $G_n$, repsectively. From the definition of $E_n$, it is straightforward to argue that $d_{G_n} \le d_G$ on $V_n \times V_n$. Indeed, for any $u,v \in V_n$ and any path from $u$ to $v$ in $G$, there is a path from $u$ to $v$ in $G_n$ which is not longer. 
  This implies for every $u, v \in V_n$, $d_G(u,v) \leq 2$ implies $d_{G_n}(u,v) \leq 2$, which proves property (ii).
 \item Let $(\gX_v)_{v \in V}$ be a \mrft \  with respect to $G$.
   Let $A \subset V_n$, $B = V_n \setminus (A \cup \partial^2_{G_n} A)$, and $S := \partial^2_{G_n} A$. 
   Assuming without loss of generality that $A$ and $B$ are nonempty, we must prove that
   $\gX_A$ and $\gX_B$ are conditionally independent given $\gX_S$.  
First notice that one cannot have both $A \cap U_n \ne \emptyset$ and $B \cap U_n \ne \emptyset$, as this would imply $d_{G_n}(A,B) \le 1$, contradicting the definition of $B$.
Therefore we must have either $A \cap U_n = \emptyset$, $B \cap U_n = \emptyset$, or both.

\textit{Case 1:} Suppose $A \cap U_n = \emptyset$. This means $A \subset V_{n-2}$ and hence $\partial^2_G A \subset S$ by (i). Since $\gX_A$ and $\gX_{V \setminus (A \cup \partial^2_G A)}$ are conditionally independent given $\gX_{\partial^2_G A}$, we then have conditional independence of $\gX_A$ and $\gX_B$ given $\gX_S$. Indeed, this uses the elementary fact that if $(Z_1,Z_2,Z_3,Z_4)$ are random variables with $Z_1$ conditionally independent of $(Z_2,Z_3)$ given $Z_4$, then $Z_1$ is conditionally independent of $Z_2$ given $(Z_3,Z_4)$. 

\textit{Case 2:} Suppose $B \cap U_n = \emptyset$. This means $B \subset V_{n-2}$ and hence, again by (i),  $\partial^2_G B \subset \partial^2_{G_n} B$. Also note that $\partial^2_{G_n} B \subset S$ (since otherwise $A \cap \partial^2_{G_n} B \ne \emptyset$, which contradicts the definition of $B$). Since the \mrft \  property with respect to $G$ implies $\gX_B$ and $\gX_{V \setminus (B \cup \partial^2_G B)}$ are independent conditioned on $\gX_{\partial^2_G B}$, we then have conditional independence of $\gX_B$ and $\gX_A$ given $\gX_S$. Since $A \subset V_n$ was arbitrary,
this  proves that $(\gX_v)_{v \in V_n}$ is a \mrft \  with respect to $G_n$.    
\item Let $\K_n$ denote the set of $K \in \mathrm{cl}_2(G)$ such that $K \subset V_n$. Recalling that $\gmu^*$ is a product measure, using the assumed clique factorization of $\gmu$, we can then write  
\begin{align*}
\frac{d\gmu[V_n]}{d\gmu^*[V_n]}(x_{V_n}) &= \int_{\X^{V \backslash V_n}} \prod_{K \in \mathrm{cl}_2(G)}f_K(x_K) \,\gmu^*[V \backslash V_n](dx_{V \backslash V_n}) \\
	&= \prod_{K \in \K_n}f_K(x_K)\int_{\X^{V \backslash V_n}} \prod_{K \in \mathrm{cl}_2(G) \backslash \K_n}f_K(x_K) \,\gmu^*[V \backslash V_n](dx_{V \backslash V_n}).
\end{align*}
Now note that any $K \in \mathrm{cl}_2(G) \backslash \K_n$ is not contained in $V_n$, and as a $2$-clique it can have no neighbors in $V_{n-2}$. Recalling that $U_n = V_n \backslash V_{n-2}$, we see that the integral expression is $x_{U_n}$-measurable; that is, there is a measurable function $g_n : \X^{U_n} \rightarrow \R_+$ such that
\[
g_n(x_{U_n}) = \int_{\X^{V \backslash V_n}} \prod_{K \in \mathrm{cl}_2(G) \backslash \K_n}f_K(x_K) \,\gmu^*[V \backslash V_n](dx_{V \backslash V_n}).
\]
Note that $U_n \in \mathrm{cl}_2(G_n)$ by definition of $G_n$. Since clearly $\K_n \subset \mathrm{cl}_2(G_n)$, we find that the expression
\begin{align*}
\frac{d\gmu[V_n]}{d\gmu^*[V_n]}(x_{V_n}) &= \prod_{K \in \K_n}f_K(x_K)g_n(x_{U_n})
\end{align*}
exhibits a $2$-clique factorization of $\gmu[V_n]$ over the graph $G_n$ satisfying the desired consistency condition.
\end{enumerate}
\end{proof}

\subsection{Conditional distributions of second-order Markov random fields} \label{se:conditional distributions of 2MRFS}
First, in Lemma \ref{le:specification-abstract}, given a \mrft \  with respect to a graph, and another \mrft \  on a subgraph, or
 more generally given MRFs on two  overlapping graphs, we identify  conditions under which the 
conditional distributions of a subset  in the intersection (given its complement) coincide for both  \mrft s. 
This will be used  to establish, for a suitable choice of $b^n$, a certain consistency condition for
the sequence of approximating measures  $\{P^n\}$  used in the proof of Theorem \ref{th:conditionalindependence-infinitegraph}.
Let us briefly recall a notation we introduced more carefully just before Theorem \ref{th:gibbsuniqueness}: For $\nu \in \P(\X^V)$ and $A,B \subset V$ we write $\nu[A \, | \, B]$ for the conditional law of the $A$-coordinates given the $B$-coordinates.

\begin{lemma} \label{le:specification-abstract}
Let $G=(V_G,E_G)$ and $H=(V_H,E_H)$ be finite graphs, and assume $V^* \subset V_G \cap V_H$ satisfies
\begin{align}
E_G \cap (V^* \times V^*) = E_H \cap (V^* \times V^*). \label{asmp:edgesets}
\end{align}
Moreover, let $A \subset V^*$ satisfy $\partial^2_G A \subset V^*$ and $\partial^2_H A \subset V^*$. Then $\partial^2_H A = \partial^2_G A =: \partial^2A$, and it holds that
\begin{align}
\{K \in \mathrm{cl}_2(G) : K \cap A \neq \emptyset\} = \{K \in \mathrm{cl}_2(H) : K \cap A \neq \emptyset\} =: \mathcal{K}_A. \label{eq:cliques}
\end{align}
Next, let $\gmu^H \in \P(\X^{V_H})$ and $\gmu^G \in \P(\X^{V_G})$, and suppose there exists a product measure
$\gmu^* = \prod_{v \in V_G \cup V_H}\glambda_v \in \P(\X^{V_G \cup V_H})$ for some $\glambda_v \in \P(\X)$,  $v \in V_G \cup V_H$, such that the densities factorize as 
\begin{align*}
\frac{d\gmu^H}{d\gmu^*[V_H]}(x_{V_H}) =  \prod_{K \in \mathrm{cl}_2(H)}f^H_K(x_K), \quad\quad \frac{d\gmu^G}{d\gmu^*[V_G]}(x_{V_G}) =  \prod_{K \in \mathrm{cl}_2(G)}f^G_K(x_K), 
\end{align*} 
for measurable functions $(f_K^H:  \X^K \mapsto \R_+)_{K \in \mathrm{cl}_2(H)}$ and $(f_K^G:  \X^K \mapsto \R_+)_{K \in \mathrm{cl}_2(G)}$ satisfying $f_K^H \equiv f_K^G$ for all $K \in \mathcal{K}_A$.
Then  $\gmu^H[ A\, | \, \partial^2A] = \gmu^G[ A\, | \, \partial^2A]$, almost surely with respect to $\gmu^*[\partial^2 A]$. 
\end{lemma}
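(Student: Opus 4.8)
The plan is to prove the three assertions in sequence: the boundary identity $\partial^2_G A = \partial^2_H A$, then the coincidence of the $2$-cliques meeting $A$, and finally the equality of conditional laws, which will follow by inspecting the two factorizations. For the boundary identity I would argue layer by layer, using only the edge-compatibility hypothesis \eqref{asmp:edgesets}. Since $A \subset V^*$ and $\partial_G A \subset \partial^2_G A \subset V^*$, every $u \in \partial_G A$ lies in $V^*$, and for such $u$ there is $v \in A \subset V^*$ with $(u,v) \in E_G \cap (V^* \times V^*) = E_H \cap (V^* \times V^*)$, so $u \in \partial_H A$; the reverse inclusion is symmetric (using $\partial^2_H A \subset V^*$), giving $\partial_G A = \partial_H A =: \partial A$. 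Now $A \cup \partial A \subset V^*$, and repeating the argument on $A \cup \partial A$ — every vertex of $\partial_G(A\cup\partial A) \subset \partial^2_G A \subset V^*$ is captured, and its edges to $A\cup\partial A \subset V^*$ are common to both graphs — yields $\partial_G(A\cup\partial A) = \partial_H(A\cup\partial A)$, whence $\partial^2_G A = \partial^2_H A =: \partial^2A$.

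For the clique coincidence \eqref{eq:cliques}, take $K \in \mathrm{cl}_2(G)$ with $a \in K \cap A$. Every $u \in K$ has $d_G(u,a) \le 2$, so $u \in A \cup \partial^2_G A \subset V^*$; hence $K \subset V^*$ and in fact $K \subset A \cup \partial^2A$. It remains to see $K \in \mathrm{cl}_2(H)$, i.e. $d_H(u,w) \le 2$ for all $u,w \in K$. If $d_G(u,w)\le 1$ then the edge (if present) is shared by \eqref{asmp:edgesets}, so $d_H(u,w)\le 1$; if $d_G(u,w)=2$, choose a common $G$-neighbor $z$, and the whole point is to place $z \in V^*$, for then $(u,z),(z,w) \in E_G\cap(V^*\times V^*) = E_H\cap(V^*\times V^*)$ and $d_H(u,w)\le 2$. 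By symmetry this gives both inclusions. I expect this placement of the distance-$2$ witness $z$ to be the \emph{main obstacle}: edge-compatibility of $G$ and $H$ on $V^*$ does not automatically upgrade to their square graphs $G^2,H^2$ (to which \eqref{eq:cliques} is equivalent via Proposition \ref{th:hammersleyclifford2}), because a distance-$2$ link may be witnessed only by a vertex outside $V^*$. Controlling $z$ is exactly where the hypotheses $\partial^2_G A,\partial^2_H A\subset V^*$ must be exploited, to force $z$ to lie within distance two of $A$ and hence in $V^*$; this is the delicate step and the one I would spend the most care on.

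Granting \eqref{eq:cliques}, the conditional-law identity is a direct density computation. Writing the joint density of $\gmu^G$ with respect to $\gmu^*[V_G]$ as $\prod_{K\in\mathrm{cl}_2(G)}f^G_K(x_K)$, the conditional law of $x_A$ given $x_{V_G\setminus A}$ is this expression normalized in the $A$-coordinates against $\prod_{v\in A}\glambda_v$. Every factor $f^G_K$ with $K\cap A=\emptyset$ is constant in $x_A$ and cancels against the normalization, so only the factors indexed by $\mathcal{K}_A$ survive; since each such $K\subset A\cup\partial^2A$, the result depends on $x_{V_G\setminus A}$ only through $x_{\partial^2A}$, giving
\[
\gmu^G[A \mid \partial^2 A](x_{\partial^2A})(dx_A) = \frac{\prod_{K\in\mathcal{K}_A}f^G_K(x_K)}{\int \prod_{K\in\mathcal{K}_A}f^G_K(x'_K)\,\prod_{v\in A}\glambda_v(dx'_v)}\,\prod_{v\in A}\glambda_v(dx_v),
\]
and the analogous formula for $\gmu^H$. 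By \eqref{eq:cliques} the index set $\mathcal{K}_A$ is common to both graphs, and by hypothesis $f^G_K\equiv f^H_K$ on $\mathcal{K}_A$, so the two right-hand sides coincide wherever the denominator is positive; this establishes $\gmu^H[A\mid\partial^2A]=\gmu^G[A\mid\partial^2A]$ almost surely with respect to $\gmu^*[\partial^2A]$, as claimed.
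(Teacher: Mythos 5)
Your handling of the boundary identity and of the final conditional-law computation is correct and essentially the paper's own argument: the paper likewise cancels every factor $f_K$ with $K\cap A=\emptyset$ from the normalizing conditional expectation and then uses independence of the coordinates outside $V^*$ to replace conditioning on $\id_{V_H\setminus A}$ (resp.\ $\id_{V_G\setminus A}$) by conditioning on $\id_{V^*\setminus A}$, after which the two expressions agree term by term. The problem is the step you yourself single out: placing the distance-$2$ witness $z$ in the proof of \eqref{eq:cliques}. The repair you propose --- ``force $z$ to lie within distance two of $A$ and hence in $V^*$'' --- cannot work. If $a\in K\cap A$ and $u,w\in K$ with $d_G(u,w)=2$, the witness $z$ is only guaranteed to satisfy $d_G(z,a)\le 3$, and nothing in the stated hypotheses places it in $V^*$. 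Concretely, take $V_G=\{a,p,q,u,w,z\}$ with edges $(a,p),(p,u),(a,q),(q,w),(u,z),(z,w)$, let $A=\{a\}$, $V^*=\{a,p,q,u,w\}$, and let $H$ be the induced subgraph of $G$ on $V^*$. Then \eqref{asmp:edgesets} holds and $\partial^2_GA=\partial^2_HA=\{p,q,u,w\}\subset V^*$, yet $K=\{a,u,w\}$ has diameter $2$ in $G$ and diameter $4$ in $H$, so $K\in\mathrm{cl}_2(G)\setminus\mathrm{cl}_2(H)$. So this step is not merely delicate: the argument you sketch for it fails, and your proof is incomplete at exactly this point.

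You have, however, put your finger on a genuine soft spot. The paper disposes of \eqref{eq:cliques} in one line (``$K\subset V^*$ and the edge sets agree on $V^*$, hence $K\in\mathrm{cl}_2(H)$''), which ignores precisely the possibility that a distance-$2$ relation inside $K$ is witnessed only outside $V^*$. In the settings where the lemma is actually invoked (via Proposition \ref{pr:specification-finitegraph}, with $G=G_n$, $H=G_{n_0}$, $V^*=V_{n_0-3}$ from Lemma \ref{le:MRFprojections}), the witness is a genuine $G$-neighbor of a vertex of $V_{n_0-3}$, hence lies in $V_{n_0-2}\subset V_G\cap V_H$ and the two edges through it belong to both edge sets, so the clique identity does hold there --- but this uses structure beyond the hypotheses as stated. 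To close the gap you would need either an additional hypothesis of this kind (agreement of the edge sets on a set large enough to contain all $G$- and $H$-neighbors of $A\cup\partial^2A$, together with containment of those neighbors in $V_G\cap V_H$) or a direct verification in the intended application; the containment $z\in V^*$ is not the right target.
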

\begin{proof}
  Let $A \subset V^*$ satisfy  $\partial^2_G A \subset V^*$ and $\partial^2_H A \subset V^*$. It is immediate from \eqref{asmp:edgesets} that $\partial^2_H A = \partial^2_G A$, and we write simply $\partial^2A$ for this set. To check \eqref{eq:cliques}, note that if $K \in \mathrm{cl}_2(G)$ intersects $A$, then $K \subset A \cup \partial^2A \subset V^*$. By \eqref{asmp:edgesets} the edge sets of $G$ and $H$ agree when restricted to $V^*$, and we deduce that $K \in \mathrm{cl}_2(H)$; this proves $\subset$ in \eqref{eq:cliques}, but the reverse inclusion follows by the same argument.
  Note that, with  $\mathcal{K}_A$ as defined in \eqref{eq:cliques}, we have also shown that
  \begin{equation}
    \label{prop-cliques}
    K \in \mathcal{K}_A \Rightarrow K \subset V^*. 
    \end{equation}

Let us work in the rest of the proof on the canonical probability space $(\X^{V_G \cup V_H},\mathrm{Borel}, \nu^*)$, with $\E$ denoting expectation on this space,  and all equations are understood to hold $\nu^*$-almost surely. 
Let $\id =(\id_v)_{v \in V_G\cup V_H}$ denote the identity map on $\X^{V_G\cup V_H}$.
By Proposition \ref{th:hammersleyclifford2}, $\nu^H$ is a \mrft, and so $\gmu^H[ A\, | \, \partial^2A] (\id_{\partial^2A}) = \gmu^H[ A \, | \, V_H \backslash A] (\id_{V_H \backslash A})$ a.s.  
Hence,
\begin{align*}
   \dfrac{ d\gmu^H[ A\, | \, \partial^2A] (\id_{\partial^2A})}{d\gmu^*[A]} (\id_A) 
 &=\dfrac{\frac{d\gmu^H}{d\gmu^*[V_H]}(\id_{V_H})}{\E\left[\frac{d\gmu^H}{d\gmu^*[V_H]} (\id_{V_H})\,| \,\id_{V_H \backslash A}\right]} \\
	&= \dfrac{\prod_{K \in \mathrm{cl}_2(H)}f^H_K(\id_K)}{\E[\prod_{K \in \mathrm{cl}_2(H)}f^H_K(\id_K) \, | \, \id_{V_H \backslash A}]},
\end{align*}
where we emphasize that the expectation in the denominator is with respect to independent random variables $(\id_v)_{v \in V_G \cup V_H}$.

The key observation is that if $K \in \mathrm{cl}_2(H)$ does not intersect $A$,
then the term $f_K^{H}(\id_K)$ factors out of the conditional expectation and cancels. 
Hence,  with $\mathcal{K}_A$ as in  \eqref{eq:cliques}, we see that 
\begin{align} 
 \dfrac{ d\gmu^H[ A\, | \, \partial^2A] (\id_{\partial^2A})}{d\gmu^*[A]} (\id_A) 
  &=  \frac{\prod_{K \in \mathcal{K}_A}f^H_K(\id_K)}{\E[\prod_{K \in \mathcal{K}_A}f^H_K(\id_K) \, | \, \id_{V_H \backslash A}]}. \label{pf:specification-abstract-1}
\end{align}
Since $\id_{V_H \setminus V^*}$ is independent of $\id_{V^*}$,  in view of \eqref{prop-cliques},
we may equivalently condition on $\id_{V^* \backslash A}$  in the denominator of the term on the right-hand side of  \eqref{pf:specification-abstract-1} to obtain 
\begin{align*}
   \dfrac{ d\gmu^H[ A\, | \, \partial^2A] (\id_{\partial^2A})}{d\gmu^*[A]} (\id_A) 
 &= \frac{\prod_{K \in \mathcal{K}_A}f_K^H(\id_K)}{\E[\prod_{K \in \mathcal{K}_A}f_K^H(\id_K) \, | \, \id_{V^* \backslash A}]}.
\end{align*}
Repeating the same arguments that led us to this point, we also find that   
\begin{align*}
   \dfrac{ d\gmu^G[ A\, | \, \partial^2A] (\id_{\partial^2A})}{d \gmu^*[A]} (\id_A) 
 &=  \frac{\prod_{K \in \mathcal{K}_A}f^G_K(\id_K)}{\E[\prod_{K \in \mathcal{K}_A}f^G_K(\id_K) \, | \, \id_{V^* \backslash A}]}.
\end{align*}
Recalling that $f^H_K \equiv f^G_K$ for $K \in \mathcal{K}_A$ by assumption, the proof is complete. 
\end{proof}

The last lemma allows us to deduce the following  insensitivity result that shows that given a finite graph $G = (V,E)$ and  associated  SDE \eqref{fingraph-SDE},
the conditional law of trajectories of particles
in a set $A \subset V$ given the trajectories of
particles at the double-boundary $\partial^2 A$ of the set does not depend on the graph structure outside of $A \cup \partial^2A$.

\begin{proposition} \label{pr:specification-finitegraph} 
Let $G=(V_G,E_G)$ and $H=(V_H,E_H)$ be finite graphs, and assume $V^* \subset V_G \cap V_H$ satisfies \eqref{asmp:edgesets}.
    Let $A \subset V^*$ satisfy $\partial^2_G A \subset V^*$ and $\partial^2_H A \subset V^*$, so that Lemma \ref{le:specification-abstract} ensures that $\partial^2_H A = \partial^2_G A =: \partial^2A$ and that \eqref{eq:cliques} holds (defining $\mathcal{K}_A$ as therein).
    Suppose   $(G, b^G, \sigma^G, \mu_0^G)$ and
    $(H, b^H, \sigma^H, \mu_0^H)$ both satisfy Assumption \ref{assumption:A}, and  let
    $P^G \in \P(\C^{V_G})$ and $P^H \in \P(\C^{V_H})$ be the corresponding unique laws of the  SDE described in \eqref{fingraph-SDE}. 
    Further, suppose the following consistency
    conditions hold:
\begin{enumerate}[(i)]
    \item We have
\begin{align}
b^H_v \equiv b^G_v, \quad \text{ for } v \in A \cup \partial^2 A,  \label{def:b-consistency} \\
\sigma^H_v \equiv \sigma^G_v, \quad \text{ for } v \in V_G \cap V_H. \label{def:sigma-consistency}
\end{align}
\item There is a product measure $\mu_0^* = \prod_{v \in V_G \cup V_H}\lambda_v \in \P((\R^d)^{V_G \cup V_H})$ for some $\lambda_v \in \P(\R^d), v \in V_G \cup V_H,$ such that $\mu^G_0$ and $\mu^H_0$ admit $2$-clique factorizations: 
  \begin{align}
    \label{mu0-factorization}
\frac{d\mu_0^G}{d\mu_0^*[V_G]}(x_{V_G}) = \prod_{K \in \mathrm{cl}_2(G)}f^G_K(x_K), \quad\quad \quad \frac{d\mu_0^H}{d\mu_0^*[V_H]}(x_{V_H}) = \prod_{K \in \mathrm{cl}_2(H)}f^H_K(x_K), 
  \end{align}
 for some measurable functions $(f^G_K: (\R^d)^K \mapsto \R_+)_{K \in \mathrm{cl}_2(G)}$ and  $(f^H_K: (\R^d)^K \mapsto \R_+)_{K \in \mathrm{cl}_2(H)}$
that satisfy the consistency condition $f^G_K \equiv f^H_K$ for every $K \in \mathcal{K}_A$.
\end{enumerate}
Then $P^G_t[A \, | \, \partial^2A] = P^H_t[A \, | \, \partial^2A]$ for each $t > 0$, both in the sense of $P^H_t[\partial^2A]$-almost sure and $P^G_t[\partial^2A]$-almost sure equality. 
\end{proposition}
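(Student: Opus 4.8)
The plan is to reduce the claim to Lemma \ref{le:specification-abstract}, applied with state space $\X = \C_t$, using the explicit path-space density computed in Section \ref{ap-finite}. Recall from \eqref{P-factorization} that for a finite graph satisfying Assumption \ref{assumption:A}, the time-$t$ law $P_t$ of the SDE system satisfies
\[
\frac{dP_t}{dP^*_t}(x) = \prod_{K \in \mathrm{cl}_2(G)} f_K(x_K(0)) \, \prod_{v \in V} \EE_t(M_v),
\]
where $P^*_t$ is a product measure and each Dol\'eans exponential $\EE_t(M_v)$ is measurable with respect to $x_{K^G_v}$, with $K^G_v := \{v\} \cup N_v(G)$ a $2$-clique. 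First I would repackage this as a genuine $2$-clique factorization on path space: setting
\[
F^G_K(x_K) := f^G_K(x_K(0)) \prod_{v \,:\, K^G_v = K} \EE_t(M^G_v), \qquad K \in \mathrm{cl}_2(G),
\]
and defining $F^H_K$ analogously for the $H$-system (with $K^H_v := \{v\} \cup N_v(H)$), we obtain $dP^G_t/dP^{*,G}_t = \prod_{K \in \mathrm{cl}_2(G)} F^G_K$ and $dP^H_t/dP^{*,H}_t = \prod_{K \in \mathrm{cl}_2(H)} F^H_K$, where $P^{*,G}_t$ and $P^{*,H}_t$ denote the product reference laws of the respective driftless systems \eqref{eq:canonical_law}.

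Next I would construct the common product reference measure required by Lemma \ref{le:specification-abstract}. Since the driftless SDE \eqref{eq:canonical_law} decouples across vertices, $P^{*,G}_t = \prod_{v \in V_G} \Lambda_v$ and $P^{*,H}_t = \prod_{v \in V_H} \Lambda_v$, where $\Lambda_v \in \P(\C_t)$ is the law of the single-vertex driftless diffusion started from $\lambda_v$. By \eqref{def:sigma-consistency} the coefficient $\sigma_v$ agrees on $V_G \cap V_H$, so $\Lambda_v$ is well-defined there and $\gmu^* := \prod_{v \in V_G \cup V_H} \Lambda_v$ is a common product reference with $\gmu^*[V_G] = P^{*,G}_t$ and $\gmu^*[V_H] = P^{*,H}_t$. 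With $\gmu^G = P^G_t$ and $\gmu^H = P^H_t$, all structural hypotheses of Lemma \ref{le:specification-abstract} are in place, and it remains only to verify the matching condition $F^G_K \equiv F^H_K$ for every $K \in \mathcal{K}_A$.

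This matching is the crux. I would first establish that the local graph structure near $A$ agrees: because $A \subset V^*$ and $\partial^2_G A = \partial^2_H A \subset V^*$, any vertex at graph-distance at most $2$ from $A$ lies in $A \cup \partial^2 A \subset V^*$, so every edge incident to a vertex of $A \cup \partial A$ is internal to $V^*$; by \eqref{asmp:edgesets} this forces $\partial_G A = \partial_H A =: \partial A$ and $N_v(G) = N_v(H)$ for all $v \in A \cup \partial A$, hence $K^G_v = K^H_v$ for such $v$. Second, for any $K \in \mathcal{K}_A$ every vertex $v$ with $K^G_v = K$ satisfies $K^G_v \cap A = K \cap A \neq \emptyset$ and hence $v \in A \cup \partial A$; thus only exponentials indexed by $v \in A \cup \partial A$ enter $F^G_K$, and likewise $F^H_K$, and these are assigned to the same clique $K^G_v = K^H_v$ in both factorizations. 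For each such $v$ we have $b^G_v = b^H_v$ (by \eqref{def:b-consistency}, since $A \cup \partial A \subset A \cup \partial^2 A$), $\sigma^G_v = \sigma^H_v$, and $N_v(G) = N_v(H)$, whence $M^G_v = M^H_v$ and $\EE_t(M^G_v) = \EE_t(M^H_v)$; together with $f^G_K \equiv f^H_K$ on $\mathcal{K}_A$ from hypothesis (ii), this gives $F^G_K \equiv F^H_K$ for all $K \in \mathcal{K}_A$. Lemma \ref{le:specification-abstract} then yields $P^G_t[A \,|\, \partial^2 A] = P^H_t[A \,|\, \partial^2 A]$, $\gmu^*[\partial^2 A]$-almost surely; since $P^G_t, P^H_t \ll \gmu^*$ and projection preserves absolute continuity, the equality also holds $P^G_t[\partial^2 A]$- and $P^H_t[\partial^2 A]$-almost surely. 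The main obstacle is precisely this combinatorial bookkeeping—matching the exponential factors clique-by-clique and verifying the neighborhood and boundary identities near $A$—rather than any analytic difficulty, which is entirely inherited from the finite-graph density formula.
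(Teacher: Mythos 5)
Your proposal is correct and follows essentially the same route as the paper's proof: both invoke the Girsanov density \eqref{P-factorization} with the common product reference law of the driftless system, observe that $N_v(G)=N_v(H)$ and hence $\EE_t(M^G_v)=\EE_t(M^H_v)$ for $v \in A \cup \partial A$, and then apply Lemma \ref{le:specification-abstract} together with absolute continuity of $P^G_t[\partial^2A]$ and $P^H_t[\partial^2A]$ with respect to the reference law. Your explicit regrouping of the Dol\'eans exponentials into clique functions $F^G_K, F^H_K$ indexed by $K^G_v = \{v\}\cup N_v(G)$ just makes precise the clique-matching step that the paper leaves implicit.
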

\begin{proof}
As in \eqref{eq:canonical_law}, let $P^* \in \P(\C^{V_G \cup V_H})$ be the unique law of the solution $X=(X_v)_{v \in V_G \cup V_H}$ of the driftless SDE
\begin{align*}
dX_v(t) = \sigma_v^G(t,X_v)\,dW_v(t), \ \ v \in V_G, \qquad dX_v(t) = \sigma_v^H(t,X_v)\,dW_v(t), \ \ v \in V_H \setminus V_G,
\end{align*}
initialized with $X(0) \sim \mu_0^*$. 
Again working on the canonical probability space $(\C^{V_G \cup V_H},\text{Borel},P^*)$, define the martingales
$M^H = (M^H_v)_{v \in H}$: 
\[
M^H_v(t) := \int_0^t (\sigma_v^H(\sigma_v^H)^\top)^{-1}b^H_v(s,X_v,X_{N_v(H)})\cdot dX_v(s),  \quad v \in V_H,
\]
with $M^G = (M^G_v)_{v \in V_G}$, defined analogously, as in \eqref{martv}. 
Using \eqref{P-factorization} and  \eqref{mu0-factorization} we can write
\begin{align*}
  \frac{dP^H_t}{dP^*_t[V_H]}  &= \prod_{K \in \mathrm{cl}_2(H)}f^H_K(X_K(0))\,\prod_{v \in V_H}\EE_t(M^H_v),  \\
  \frac{dP_t^G}{dP^*_t[V_G]}  &= \prod_{K \in \mathrm{cl}_2(G)}f^G_K(X_K(0))\,\prod_{v \in V_G}\EE_t(M^G_v).
\end{align*}
with $\EE_t$ defined as in \eqref{def:doleans-exponential}.  
Note that if $v \in A \cup \partial A$, then we have $N_v(H)=N_v(G)$; 
indeed, this is due to \eqref{asmp:edgesets} and the inclusions 
$\partial_G A \subset V^*$ and $\partial_H A \subset V^*$.
Thus, by the consistency conditions \eqref{def:b-consistency} and \eqref{def:sigma-consistency}
along with the expressions above for $M^H$ and $M^G$,  we have 
$\EE_t(M^H_v) = \EE_t(M^G_v)$ for $v \in A \cup \partial A$.
Applying 
Lemma \ref{le:specification-abstract} with ${\mathcal X} = {\mathcal C}_t$,
$\nu^* = P^*$, $\nu^H = P^H_t$ and $\nu^G = P^G_t$, it follows from the consistency conditions
(i) and (ii) that 
 $P^G_t[A \, | \, \partial^2A] = P^H_t[A \, | \, \partial^2A]$
  holds in the sense of $P^*_t[\partial^2A]$-almost sure equality.
  Since both $P^H_t[\partial^2A]$ and $P^G_t[\partial^2A]$ are absolutely continuous with
  respect to $P^*_t[\partial^2A]$, the claim follows. 
\end{proof}

\section{Markov random field property for infinite-dimensional diffusions}
\label{sec-pf-infinitegraph} 

Fix a countably infinite connected graph $G = (V,E)$, and let
$(G,b,\sigma,\mu_0)$ be as in Assumption \ref{assumption:B}.  As usual, let
 $P = P^{\mu_0}$ and $P_t = P^{\mu_0}_t$ denote the unique law of the SDE \eqref{fingraph-SDE} and its projection, and let $P^{*,\mu_0}$ be the
 law of the canonical SDE system \eqref{eq:canonical_law} started from initial law $\mu_0$. 
 In this section we will prove Theorem \ref{th:conditionalindependence-infinitegraph}, that is, the \mrft \  property for $P_t$ and $P$.
 We will also use the same canonical space $(\C^V, \text{Borel}, P^{*,\mu_0})$ and 
canonical processes $(X_v)_{v \in V}$ as in  Section \ref{subs-approxmeas}.

 Throughout, choose an arbitrary vertex $\o$ in $V$ to be the root, and  
 let $G_n = (V_n, E_n)$ and $U_n = V_n\setminus V_{n-2}$  be as defined in Lemma \ref{le:MRFprojections}. 
 Also,  set 
 \begin{equation}
   \label{driftn}
   b_v^n = b_v,  \quad v \in V_{n-2}, \qquad  b_v^n = 0,  \quad v \in U_n,
   \end{equation}
(The family $\{b_v^n: n \geq 3, v \in U_n\}$ is arbitrary and set to zero for convenience, but more generally must merely be measurable and uniformly bounded.)
Let $\{P^n\}_{n \in \N}$ and
$\{P^n_t\}_{n \in \N}$ 
be the corresponding approximating sequence of measures and its projections, as defined in Section \ref{subs-approxmeas}.  
We first establish  
tightness and convergence results for $\{P^n_t\}_{n \in \N}$ in Section \ref{subs-tightness} and finally 
present the proof of Theorem \ref{th:conditionalindependence-infinitegraph} in Section \ref{subs-pf-infingraph}.

\subsection{Tightness and convergence results}
\label{subs-tightness}

In the following, let $H(\cdot \, | \, \cdot)$ denote relative entropy, defined for $\nu \ll \mu$ by
\[
H(\nu | \mu) = \int \frac{d\nu}{d\mu}\log \frac{d\nu}{d\mu}\,d\mu, 
\]
and $H(\nu|\mu) = \infty$ for $\nu \not\ll \mu$. Recall also our notation $\|x\|_{*,t} := \sup_{0 \le s \le t}|x_s|$ for the truncated supremum norm.

\begin{lemma} \label{le:tightness}
Suppose Assumption \ref{assumption:B} holds.  
For each $t > 0$ and each finite set $A \subset V$, we have  
\begin{align}
	\sup_n\sup_{v \in V_n}\E^{P^n}\left[\|X_v\|_{*,t}^2 \right] &< \infty, \label{def:secondmomentbound} \\
	\sup_n H\left(\left. P^n_t[A] \, \right| \, P^{*,\mu_0}_t[A]\right) &< \infty, \label{def:entropybound} \\
	\sup_n H\left(\left. P^{*,\mu_0}_t[A] \, \right| \, P^n_t[A]\right) &< \infty. \label{def:entropybound-2}
\end{align} 
\end{lemma}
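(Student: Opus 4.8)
The plan is to establish the three bounds separately: the moment bound \eqref{def:secondmomentbound} by a direct Gronwall estimate, and the two entropy bounds \eqref{def:entropybound}--\eqref{def:entropybound-2} by localizing the relative entropy of the fixed finite marginal $A$ to a bounded, $n$-independent neighborhood of $A$. For \eqref{def:secondmomentbound}, recall that under $P^n$ the canonical process solves \eqref{fingraph-SDEn}, so the drift vanishes off $V_{n-2}$ and equals $b_v$ on $V_{n-2}$. For $v\in U_n\cup(V\setminus V_n)$ the coordinate is driftless, $X_v=X_v(0)+\int_0^\cdot\sigma_v\,dW_v$, and boundedness of $\sigma$ (Assumption (\ref{assumption:B}.3)), Doob's inequality, and the uniform initial bound \eqref{assumption:initialsecondmoment} give $\sup_n\E^{P^n}[\|X_v\|_{*,t}^2]<\infty$; in particular the clique edges added inside $U_n$ are irrelevant since those vertices carry no drift. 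For $v\in V_{n-2}$, where $N_v(G_n)=N_v(G)$, I would pass to the mild formulation of \eqref{fingraph-SDEn}, apply the Burkholder--Davis--Gundy and Cauchy--Schwarz inequalities, insert the linear growth bound of Assumption (\ref{assumption:B}.2), and close a Gronwall estimate for $\psi_v(t):=\E^{P^n}[\|X_v\|_{*,t}^2]$ over the finitely many graph-neighbors of $v$, using local finiteness and the uniform initial moment.

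For the entropy bounds, fix a finite $A\subset V$ and, for $n$ large, let $W$ be the graph ball of a fixed radius $R=R(A)$ about $A$ (contained in $V_{n-2}$, so that its $G_n$-structure equals its $G$-structure and $W$ stabilizes with $n$). Writing the full density $D_n:=dP^n_t/dP^{*,\mu_0}_t=\prod_{v\in V_n}\EE_t(M^n_v)$ from \eqref{dPdPstar}, I would split off the factors whose drift inputs lie in $W$, setting $D_{\mathrm{in}}:=\prod_{v\in W_0}\EE_t(M^n_v)$ with $W_0:=\{v:\{v\}\cup N_v(G_n)\subset W\}\supset A$, and $Q_{\mathrm{in}}:=D_{\mathrm{in}}\,P^{*,\mu_0}_t$. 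The point of this localized measure is that it is the law of the SDE carrying drift only on the finite set $W_0$, so its relative entropy against the reference is an honest finite sum: by Girsanov and the mutual orthogonality of the $M^n_v$ (distinct driving Brownian motions) one gets $H(Q_{\mathrm{in}}\,|\,P^{*,\mu_0}_t)=\E^{Q_{\mathrm{in}}}[\log D_{\mathrm{in}}]=\tfrac12\E^{Q_{\mathrm{in}}}\sum_{v\in W_0}[M^n_v](t)$ and $-\E^{P^{*,\mu_0}_t}[\log D_{\mathrm{in}}]=\tfrac12\E^{P^{*,\mu_0}_t}\sum_{v\in W_0}[M^n_v](t)$, with $[M^n_v](t)=\int_0^t (b^n_v)^\top(\sigma_v\sigma_v^\top)^{-1}b^n_v\,ds$ controlled by $|b^n_v|^2$ through the bounded inverse of $\sigma_v$, hence by the linear growth of Assumption (\ref{assumption:B}.2) together with the second moments of $Q_{\mathrm{in}}$ (a finite local system) and of $P^{*,\mu_0}_t$ respectively. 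Both quantities are then bounded by a constant depending only on $|W_0|$, uniformly in $n$.

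Granting that the $A$-marginals coincide, $P^n_t[A]=Q_{\mathrm{in}}[A]$, the two entropy bounds follow from the data-processing (monotonicity under marginalization) inequality: \eqref{def:entropybound} from $H(P^n_t[A]\,|\,P^{*,\mu_0}_t[A])=H(Q_{\mathrm{in}}[A]\,|\,P^{*,\mu_0}_t[A])\le H(Q_{\mathrm{in}}\,|\,P^{*,\mu_0}_t)$, and \eqref{def:entropybound-2} (noting $P^{*,\mu_0}_t$ trivially has the correct $A$-marginal) from $H(P^{*,\mu_0}_t[A]\,|\,P^n_t[A])=H(P^{*,\mu_0}_t[A]\,|\,Q_{\mathrm{in}}[A])\le H(P^{*,\mu_0}_t\,|\,Q_{\mathrm{in}})=-\E^{P^{*,\mu_0}_t}[\log D_{\mathrm{in}}]$. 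The main obstacle is precisely the marginal identity $P^n_t[A]=Q_{\mathrm{in}}[A]$: one would like to integrate out the distant factors $\prod_{v\notin W_0}\EE_t(M^n_v)$ using that, conditionally on the initial data $\sigma(X_V(0))$ under $P^{*,\mu_0}$, the coordinate paths are independent and each such factor is a mean-one martingale. The difficulty is that the stochastic-exponential footprints overlap at the interface of $W$ (a boundary coordinate's path feeds the drift of factors on both sides), so this is not a clean conditional product and the marginal of $P^n_t$ on $A$ genuinely feels the distant drift. Reconciling this is where I expect the hard work to lie: it should require the careful relative-entropy estimates anticipated in the introduction, choosing the buffer radius $R$ large and quantifying the residual influence of the distant drift on the window by means of the uniform second-moment bound \eqref{def:secondmomentbound}.
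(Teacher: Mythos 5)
Your treatment of \eqref{def:secondmomentbound} is essentially the paper's: an It\^o/Gronwall estimate using linear growth of the drift, boundedness of $\sigma$, and the uniform initial second moment. One caution: a per-vertex Gronwall ``over the finitely many graph-neighbors of $v$'' does not close, because the neighbors' moments involve their own neighbors; the paper closes the loop by bounding the neighbor term by $\sup_{u\in V_n}\E^{P^n}[\|X_u\|_{*,s}^2]$ and running Gronwall on that single scalar quantity. This is a presentational fix, not a conceptual one.

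For the entropy bounds there is a genuine gap, which you have in fact diagnosed yourself: your argument requires the marginal identity $P^n_t[A]=Q_{\mathrm{in}}[A]$, and this is false. The law of $X_A$ under $P^n$ depends on the drift at \emph{every} vertex of $V_n$, since influence propagates through the chain of interactions, so no finite buffer radius $R$ makes the distant drift invisible to the window, and quantifying the ``residual influence'' would amount to reproving the lemma. The paper's resolution is to localize in the opposite direction: define $Q^n_t$ by deleting the drift \emph{on $A$} and keeping it everywhere else, $dQ^n_t/dP^{*,\mu_0}_t=\prod_{v\in V_n\setminus A}\EE_t(M^n_v)$. This fixes both problems at once. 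First, $dP^n_t/dQ^n_t=\prod_{v\in A\cap V_n}\EE_t(M^n_v)$ is a product over the fixed finite set $A$ only, so $H(P^n_t\,|\,Q^n_t)$ is an explicit finite sum of expected quadratic variations $\tfrac12\sum_{v\in A\cap V_n}\E^{P^n}\int_0^t|\sigma_v^{-1}b^n_v|^2\,ds$, bounded uniformly in $n$ by \eqref{def:secondmomentbound}. Second, the needed marginal identity now holds exactly on the \emph{reference} side rather than on the $P^n$ side: under $Q^n$ the coordinates in $A$ are driftless and each solves the autonomous equation \eqref{eq:canonical_law-individual} (there is no interaction in $\sigma$), so uniqueness in law gives $Q^n_t[A]=P^{*,\mu_0}_t[A]$. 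Data processing then yields $H(P^n_t[A]\,|\,P^{*,\mu_0}_t[A])=H(P^n_t[A]\,|\,Q^n_t[A])\le H(P^n_t\,|\,Q^n_t)$, and \eqref{def:entropybound-2} follows symmetrically from $H(P^{*,\mu_0}_t[A]\,|\,P^n_t[A])=H(Q^n_t[A]\,|\,P^n_t[A])\le H(Q^n_t\,|\,P^n_t)$, after checking that the Gronwall moment bound also holds under $Q^n$. Your data-processing and quadratic-variation computations are the right ingredients; the missing idea is this reversed localization.
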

\begin{proof}
Fix $t > 0$. 
We begin with  a standard estimate.  Recall the definition of
$b^n_v$ from \eqref{driftn},  
apply It\^{o}'s formula to the SDE \eqref{fingraph-SDEn}, and use the linear growth of $b_v$ from Assumption (\ref{assumption:B}.2) along with the uniform boundedness of $\sigma_v$ from Assumption 
(\ref{assumption:B}.3)  
to conclude that, for each $n \in \N$ and $v \in V_n$,  
\begin{align*}
  \E^{P^n} \left[ \|X_v\|_{*,t}^2 \right] & \le C\E^{P^n}
  \left[|X_v(0)|^2 +  \int_0^t |b^n_v(s,X_v,X_{N_v(G)})|^2 \,ds + \int_0^t |\sigma_v(s,X_v(s))|^2 \,ds \right] \\
	&\le C \E^{P^n} \left[1 + |X_v(0)|^2 + \int_0^t\left( \|X_v\|_{*,s}^2 + \frac{1}{|N_v(G)|}\sum_{u \in N_v(G)}\|X_u\|_{*,s}^2\right)ds\right],
\end{align*}
where $C < \infty$ is a constant that can change from line to line but does not depend on $n$ or $v$.
This implies that 
\begin{align*}
\sup_{v \in V_n}\E^{P^n}\left[ \|X_v\|_{*,t}^2 \right] &\le C\left( 1 + \sup_{v \in V} \E^{P^n}\left[|X_v(0)|^2\right] + \int_0^t\sup_{v \in V_n}\E^{P^n}\left[\|X_v\|_{*,s}^2 \right] ds\right), 
\end{align*}
where we have used the inclusion $V_n \subset V$. 
Apply Gronwall's inequality 
to find 
\begin{align}
\sup_{v \in V_n}\E^{P^n}\left[\|X_v\|_{*,t}^2 \right]\le C\left(1 + \sup_{v \in V}\int_{(\R^d)^V} |x_v|^2\,\mu_0(dx_V)\right). \label{pf:quadraticestimate}
\end{align}
The right-hand side is finite by Assumption (\ref{assumption:B}.1), and so    \eqref{def:secondmomentbound} follows.

Given a finite subset $A \subset V$, 
define $Q^n_t \in \P(\C_t^V)$ by
\begin{align*}
  \frac{dQ^n_t}{dP^{*,\mu_0}_t} = \prod_{v \in V_n \setminus A}\EE_t(  M_v^n ),
  \ \ \mbox{ where }  \ \ 
         M_v^n :=  \int_0^\cdot (\sigma_v\sigma_v^\top)^{-1}b^n_v(s,X_v,X_{N_v(G_n)}) \cdot dX_v(s), \ v \in V. 
\end{align*}
Due to Assumptions (\ref{assumption:B}.2) and (\ref{assumption:B}.3), and the definition of $P^{*,\mu_0}$ from Remark \ref{rem-driftless}, 
we can apply Lemma \ref{lem:Girsanov-justification} with $\QQ= P^{*,\mu_0}$, $X=(X_v)_{v \in V_n}$ and $f(t,x) = (\one_{\{v \in V_n \setminus A\}} \sigma_v^{-1}b^n_v(t,x_v,x_{N_v(G_n)}))_{v \in V_n}$, to conclude that $\frac{dQ^n_t}{dP^{*,\mu_0}_t}$ is a true $P^{*,\mu_0}$-martingale. 
It then follows from Girsanov's theorem \cite[Corollary 3.5.2]{karatzas-shreve}  and the uniqueness in law of the driftless SDE \eqref{eq:canonical_law-individual} that $Q^n_t[A] = P^{*,\mu_0}_t[A]$.  By a similar argument, 
\begin{align*}
 \frac{dP^n_t}{dP^{*,\mu_0}_t} = \prod_{v \in V_n} \EE_t( M_v^n ), \quad \ \text{and thus}  \ \quad   \frac{dP^n_t}{dQ^n_t} = \prod_{v \in A \cap V_n} \EE_t( M_v^n ), 
\end{align*}
where, in case $A \cap V_n = \emptyset$, we interpret the empty product as $1$. 
Once again invoking the linear growth of $b$, the boundedness of $\sigma_v$, 
   \eqref{pf:quadraticestimate}, the fact that
   $Q_t^n[A] = P^{*,\mu}[A]$ and Remark \ref{rem-driftless},  note that 
   Girsanov's theorem
   also shows that for every $v \in A \cap V_n$, under $P^n$, $M_v^n - [M_v^n]$  
   is a martingale and
   $[M_v^n](t) = 
 \int_0^t |\sigma_v^{-1} b_v^n (s, X_v, X_{N_v(G_n)})|^2 \, ds$.
Then, use the data processing inequality of relative entropy 
to obtain
\begin{align*}
	H(P^n_t[A] \, | \, P^{*,\mu_0}_t[A]) &= H(P^n_t[A] \, | \, Q^n_t[A]) \\
	&\le H(P^n_t \, | \, Q^n_t) \\
        & =   \sum_{v \in A \cap V_n} \E^{P^n} \left[ M_v^n(t) - \frac{1}{2} [M_v^n](t) \right] \\  
	&= \frac12 \sum_{v \in A \cap V_n} \E^{P^n}  \left[\int_0^t\, | \sigma_v^{-1}b^n_v(s,X_v,X_{N_v(G_n)})|^2\,ds \right]\\
	&\le C\sum_{v \in A \cap V_n}\E^{P^n}\left[1 + \|X_v\|_{*,t}^2 + \frac{1}{|N_v(G_n)|} \sum_{u \in N_v(G_n)} \|X_u\|_{*,t}^2\right] \\
	&\le C|A|\left(1 + 2 \sup_{v \in V_n}\E^{P^n} \left[\|X_v\|_{*,t}^2 \right]\right).
\end{align*}
Therefore \eqref{def:entropybound} follows from \eqref{def:secondmomentbound}.

Noting that due to the identity $Q_t^n[A] = P^{*,\mu}[A]$ and Remark \ref{rem-driftless}, 
 under $Q^n$,  $(X_v)_{v \in A}$ is driftless and 
  $M_v^n$ is a martingale. 
Therefore,  noting that
\begin{align*}
  \frac{dQ^n_t}{dP^n_t} = \prod_{v \in A \cap V_n}
  \exp \left( -M_v^n (t) 
  + \half \int_0^t |\sigma_v^{-1}b^n_v(s,X_v,X_{N_v(G_n)})|^2\,ds \right), 
\end{align*}
another application of the  the data processing inequality of relative entropy yields
\begin{align*}
	H(P^{*,\mu_0}_t[A] \, | \, P^n_t[A]) &= H(Q^n_t[A] \, | \, P^n_t[A]) \\
	&\le H(Q^n_t \, | \, P^n_t) \\
	&= \frac12 \sum_{v \in A \cap V_n} \E^{Q^n}  \left[\int_0^t| \sigma_v^{-1}b^n_v(s,X_v,X_{N_v(G_n)})|^2\,ds \right]\\
	&\le C\sum_{v \in A \cap V_n}\E^{Q^n}\left[1 + \|X_v\|_{*,t}^2 + \frac{1}{|N_v(G_n)|} \sum_{u \in N_v(G_n)} \|X_u\|_{*,t}^2\right] \\
	&\le C|A|\left(1 + 2 \sup_{v \in V_n}\E^{Q^n} \left[\|X_v\|_{*,t}^2 \right]\right).
\end{align*}
 The same argument that was used to obtain   \eqref{pf:quadraticestimate} 
     can  also be used to show that \eqref{pf:quadraticestimate} holds with $P^n$ replaced by $Q^n$. 
Therefore $\sup_n\sup_{v \in V_n}\E^{Q^n}\left[\|X_v\|_{*,t}^2 \right] < \infty$ by Assumption (\ref{assumption:B}.1), and hence the last display implies  \eqref{def:entropybound-2}. 
\end{proof}

The next lemma  will be used to show both that the \emph{existence} of a weak solution to the infinite SDE system \eqref{fingraph-SDE} holds automatically and also that it arises as the limit of finite-graph systems. Recall that $P \in \P(\C^V)$ denotes the law of the solution of \eqref{fingraph-SDE}.

\begin{lemma} \label{le:infinitegraphlimit}
Suppose Assumption \ref{assumption:B} holds.
Then $P^n \rightarrow P$ weakly on $\C^V$. Moreover, for any finite set $A' \subset V$, any $t > 0$, and any bounded measurable function $\psi : \C_t^{A'} \rightarrow \R$, we have 
\begin{align*}
\lim_{n\rightarrow\infty}\E^{P^n}[\psi(X_{A'}[t])] = \E^{P}[\psi(X_{A'}[t])].
\end{align*}
\end{lemma}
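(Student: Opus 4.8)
The plan is to prove the weak convergence $P^n \to P$ by a tightness-and-identification argument, and then to upgrade the resulting weak convergence of finite-coordinate projections to convergence against bounded \emph{measurable} test functions, using the relative entropy bound of Lemma \ref{le:tightness}.

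First I would establish tightness of $\{P^n\}$ on $\C^V$. Since $\C^V$ carries the product topology over a countable index set, it suffices to show that for each finite $A \subset V$ the laws $\{P^n[A]\}_n$ are tight on $\C^A$, and for this it is enough to treat each coordinate $\C = C(\R_+;\R^d)$ separately. Under $P^n$, each $X_v$ with $v \in V_n$ is an It\^o process whose diffusion coefficient $\sigma_v$ is uniformly bounded by Assumption (\ref{assumption:B}.3) and whose drift $b^n_v$ obeys the linear growth bound of Assumption (\ref{assumption:B}.2). Combined with the uniform second-moment bound \eqref{def:secondmomentbound}, a standard Aldous-type criterion applies: for stopping times $\tau_n \le t$ and $\delta_n \downarrow 0$, the drift increment is bounded by $\delta_n$ times a random variable of uniformly bounded $L^2$-norm, while the martingale increment has second moment at most $\|\sigma\|_\infty^2\delta_n$, so both tend to $0$ in probability; together with the compact containment provided by \eqref{def:secondmomentbound} this yields tightness.

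Next I would identify every subsequential limit as a weak solution of \eqref{fingraph-SDE}. By construction $b^n_v = b_v$ and $N_v(G_n) = N_v(G)$ for all $v \in V_{n-3}$, and $V_{n-3} \uparrow V$, so for each fixed $v$ the $v$-coordinate dynamics under $P^n$ coincide with those of \eqref{fingraph-SDE} once $n$ is large. Passing to a convergent subsequence $P^{n_k} \to P^\infty$, I would verify that under $P^\infty$, for each $v$ the process $X_v(\cdot) - X_v(0) - \int_0^\cdot b_v(r,X_v,X_{N_v(G)})\,dr$ is a continuous martingale with quadratic covariation $\int_0^\cdot \sigma_v\sigma_v^\top\,dr$, and that these martingales are orthogonal across distinct vertices. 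This is the usual closedness of the martingale problem under weak limits: one tests the martingale identity against bounded continuous functionals of the past and passes to the limit, using continuity of the coefficients in the path variables together with the uniform second-moment bound \eqref{def:secondmomentbound}, which furnishes the uniform integrability needed to handle the unbounded, linear-growth drift. Since a weak solution of \eqref{fingraph-SDE} is unique in law by Assumption (\ref{assumption:B}.4), every subsequential limit equals $P$, and tightness then forces the full sequence to converge, $P^n \to P$ weakly on $\C^V$ (this simultaneously exhibits $P$ as a limit of finite-graph systems, giving existence). For the ``moreover'' statement, note that the map $\C^V \ni x \mapsto (x_v[t])_{v \in A'} \in \C_t^{A'}$ is continuous, so $P^n_t[A'] \to P_t[A']$ weakly. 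Writing $\rho := P^{*,\mu_0}_t[A']$, $g_n := dP^n_t[A']/d\rho$ and $g := dP_t[A']/d\rho$, the density $g$ exists since \eqref{def:entropybound} and lower semicontinuity of relative entropy give $H(P_t[A']\,|\,\rho) < \infty$; moreover \eqref{def:entropybound}, via the superlinearity of $x\log x$ and de la Vall\'ee--Poussin, shows $\{g_n\}$ is uniformly integrable with respect to $\rho$, hence relatively weakly compact in $L^1(\rho)$ by Dunford--Pettis. Any weak-$L^1$ limit point $h$ satisfies $\int \varphi h\,d\rho = \lim_n \int \varphi\,dP^n_t[A'] = \int \varphi\,dP_t[A'] = \int \varphi g\,d\rho$ for all bounded continuous $\varphi$, forcing $h = g$, so $g_n \to g$ weakly in $L^1(\rho)$; since a bounded measurable $\psi$ lies in $L^\infty(\rho) = (L^1(\rho))^*$, this gives exactly $\E^{P^n}[\psi(X_{A'}[t])] \to \E^{P}[\psi(X_{A'}[t])]$.

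The main obstacle is the identification step: because the drift is only of linear growth, path-dependent and possibly non-Markovian, passing to the weak limit in the martingale problem requires carefully combining the uniform second-moment estimate \eqref{def:secondmomentbound} with continuity of the coefficients to justify convergence of the drift integral and of the diffusion bracket. The entropy-based upgrade to bounded measurable test functions, by contrast, is routine once weak convergence is in hand.
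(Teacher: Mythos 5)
Your tightness argument and your entropy/Dunford--Pettis upgrade to bounded measurable test functions are both sound, and the latter is essentially equivalent to the paper's invocation of precompactness in the $\tau$-topology (\cite[Lemma 6.2.16]{dembozeitouni}). The genuine gap is in your identification step. You pass to the limit in the martingale problem ``using continuity of the coefficients in the path variables,'' but Assumptions (\ref{assumption:B}.2) and (\ref{assumption:B}.3) only require $b_v$ and $\sigma_v$ to be \emph{progressively measurable} (with linear growth, resp.\ bounded and invertible); no continuity in the path variable is assumed. For merely measurable, path-dependent coefficients, weak convergence $P^{n_k}\to Q$ does not let you pass to the limit in expressions such as $\E^{P^{n_k}}\bigl[\Phi(X)\int_s^t b_v(r,X_v,X_{N_v(G)})\,dr\bigr]$, so the closedness of the martingale problem under weak limits fails as you have stated it.

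The repair is available inside your own write-up, but the order of the argument must be inverted. As written, your Dunford--Pettis step identifies the weak-$L^1$ limit of the densities $g_n$ by testing against bounded continuous $\varphi$ and using $\int\varphi\,dP^n_t[A']\to\int\varphi\,dP_t[A']$ --- i.e.\ it presupposes that the weak limit is already known to be $P$, so it cannot be fed back into the identification. Instead, extract a subsequence along which, for every finite $A'$ and every $t$, the densities converge weakly in $L^1(\rho)$ (uniform integrability from \eqref{def:entropybound}); this yields convergence of $\E^{P^{n_k}}[\psi(X_{A'}[t])]$ for all bounded \emph{measurable} $\psi$ to the corresponding expectation under the subsequential weak limit $Q$. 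With convergence against bounded measurable functionals in hand, one can pass to the limit in the statement that the processes $\int_0^s\sigma_v^{-1}(r,X_v)\,dX_v(r)-\int_0^s\sigma_v^{-1}(r,X_v)b_v(r,X_v,X_{N_v(G)})\,dr$, $v\in V_{n-2}$, are independent Wiener processes under $P^n$ (valid for large $n$ since $b^n_v=b_v$ and $N_v(G_n)=N_v(G)$ there), because this characterization only involves measurable functionals of the path. This identifies $Q$ as a weak solution of \eqref{fingraph-SDE}, and Assumption (\ref{assumption:B}.4) gives $Q=P$. That is precisely the route the paper takes; your proposal reaches the same destination but the continuity-based shortcut in the middle does not hold under the stated hypotheses.
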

\begin{proof}
Fix $t > 0$.
The entropy bound of \eqref{def:entropybound} shows that $(P^n_t[A'])_{n \in \N}$ are precompact in the weak$^*$ topology induced on $\P(\C^{A'}_t)$ by the bounded measurable functions on $\C_t^{A'}$ \cite[Lemma 6.2.16]{dembozeitouni}.
In particular, this sequence is tight, and since this holds for every finite set $A'$ and every $t > 0$ we deduce that the entire sequence $(P^n)_{n \in \N}$ is tight in $\C^V$.
Note also that for sufficiently large $n$ it holds under $P^n$ that the processes
\begin{align}
\int_0^s \sigma_v^{-1}(r,X_v)\,dX_v(r) - \int_0^s\sigma_v^{-1}(r,X_v)b_v(r,X_v,X_{N_v(G)})\,dr ,   \ \ \ s \ge 0, \ \ \ v \in V_{n-2}, \label{pf:infinitegraphlimit1}
\end{align}
are independent standard Wiener processes, due to the consistency condition for the $b^n_v$'s and the identity $N_v(G_n)=N_v(G)$ valid for $v \in V_{n-2}$.

Now let $Q \in \P(\C^V)$ be any weak (in the usual sense) subsequential limit of $(P^n)_{n \in \N}$, with $P^{n_k} \rightarrow Q$ weakly. The aforementioned precompactness in the weak$^*$ topology implies that
\[
\lim_{k\rightarrow\infty}\E^{P^{n_k}}[\psi(X_{A'}[t])] = \E^{Q}[\psi(X_{A'}[t])],
\]
for any finite set $A' \subset V$, any $t > 0$, and any bounded measurable function $\psi$ on $\C_t^{A'}$.
We conclude that, under $Q$, the processes in \eqref{pf:infinitegraphlimit1} are independent Wiener processes, for $v \in V$. This shows that $Q$ is the law of a weak solution of the SDE system \eqref{fingraph-SDE}, which we know to be unique by assumption (\ref{assumption:B}.5). Hence, $Q=P$.
\end{proof}

\subsection{Proof of the second-order Markov random field property on the infinite graph}
\label{subs-pf-infingraph}

\begin{proof}[Proof of Theorem \ref{th:conditionalindependence-infinitegraph}] 
Fix $(G = (V, E), b, \sigma, \mu_0)$ and $X = (X_v)_{v \in V}$   as in the statement of the theorem.
For $n \geq 4$, consider the sequence of graphs $G_n = (V_n, E_n), n \in \N$ constructed from $G$ as
in Lemma  \ref{le:MRFprojections}.  
We first note that due to the fact that $\mu_0$ is a \mrft \  by Assumption (\ref{assumption:B}.1), 
part (iii) of Lemma \ref{le:MRFprojections}, with $\X = \R^d, \gmu = \mu_0, \gmu^* = \mu_0^*$,
ensures that $\mu_0[V_n]$ is a \mrft \   with respect to the graph $G_n$. Moreover, since $d\mu_0[V_n]/d\mu_0^*[V_n]$ is strictly positive by Assumption (\ref{assumption:B}.1),  Proposition \ref{th:hammersleyclifford2}  
 shows that $\mu_0[V_n]$ admits a $2$-clique factorization with respect to the product measure $\mu_0^*[V_n]$ for each $n$.
 Hence, $\mu_0[V_n]$ satisfies Assumption (\ref{assumption:A}.1), which when combined with the definition of $b^n=(b^n_v)_{v \in V_n}$ in \eqref{driftn} and
   the fact that $b, \sigma$ satisfy Assumptions (\ref{assumption:B}.2) and (\ref{assumption:B}.3), 
   shows that  $(G_n, b^n, (\sigma_v)_{v \in V_n}, \mu_0[V_n])$ satisfy Assumption \ref{assumption:A}.  
 Since $P^n[V_n]$ is the law of the SDE \eqref{fingraph-SDEn} on the finite graph $G_n$,  it is a \mrft \  by 
 Theorem \ref{pr:conditionalindependence-finitegraph}.

 Now, fix two finite sets $A, B \subset V$ with $B$ disjoint of $A \cup \partial^2A$, where throughout, we use $\partial^2$ to denote
$\partial_G^2$. 
Let $n_0$ denote the smallest integer greater than or equal to $4$ for which $A \cup \partial^2A \cup B \subset V_{n_0-3}$, and let $n \geq n_0$.
Then,  part (iv) of Lemma \ref{le:MRFprojections}, again with 
${\mathcal X} = \R^d$, $\nu  = \mu_0$, and $\nu^*  = \mu^*_0$, 
ensures that 
 $\mu_0[V_n]$ and $\mu_0[V_{n_0}]$ admit $2$-clique factorizations which are consistent 
 in the sense that the corresponding measurable functions $f_K^{G_n}$ and $f_K^{G_{n_0}}$
 agree for every $K \in \mathrm{cl}_2(G_{n_0})$ that intersects $A$ (equivalently, for
   every $K \in \mathrm{cl}_2(G)$ that intersects $A$).  
Since  $b_v^{n} = b_v^{n_0} = b_v$ for all $v \in A \cup \partial^2 A$ by \eqref{driftn}, and since $A \cup \partial^2_G A \subset V_{n_0-3}$,
we may apply Proposition \ref{pr:specification-finitegraph}, with $G = G_{n}$, $H = G_{n_0}$, $V^*=V_{n_0-3}$,  $\mu_0^{G_k} = \mu_0[V_k]$, and $(b^{G_k}_v,\sigma^{G_k}_v) = (b^k_v,\sigma_v)$ for $v \in G_k$ and $k \in \{n_0, n\}$, to deduce that  
$P^n_t[A \, | \, \partial^2A] = P_t^{n_0}[A \, | \, \partial^2A]$ for all $n \geq n_0$.  
 In other words, this implies that given a bounded continuous function  $f$,  there exists
 a measurable function $\varphi$ (that does not depend on $n$) such that
 \begin{equation}
   \label{phi-meas}
\varphi(X_{\partial^2A}[t]) = \E^{P^n}[f(X_A[t]) \, | \, X_{\partial^2A}[t]], \ \ \  P^n-a.s., \text{ for } n \ge n_0.
 \end{equation}

Now, fix additional bounded continuous functions $g,h$. For $t > 0$, taking the conditional expectation with respect to $X_{V_n \setminus A}[t]$ inside the expectation on the left-hand side below and using the \mrft \  property of $P^n$  we have
\begin{align*}
\E^{P^n}[f(X_A[t])g(X_{\partial^2A}[t])h(X_B[t])] = \E^{P^n}[\E^{P^n}[f(X_A[t]) \, | \, X_{\partial^2A}]g(X_{\partial^2A}[t])h(X_B[t])].
\end{align*}
When combined with \eqref{phi-meas}, this implies 
\begin{align*}
\E^{P^n}[f(X_A[t])g(X_{\partial^2A}[t])h(X_B[t])] = \E^{P^n}[\varphi(X_{\partial^2A}[t])g(X_{\partial^2A}[t])h(X_B[t])].
\end{align*}
Using the second part of Lemma \ref{le:infinitegraphlimit} for the finite set $A' = A \cup \partial^2 A \cup B$, and for both 
$\psi (y_{A'}) := f(y_A) g(y_{\partial^2A}) h(y_B)$,
and $\psi( y_{A'}) = \varphi(y_{\partial^2A}) g(y_{\partial^2 A}) h(y_B)$ 
for $y_{A'} \in \C_t^{A'}$, 
we may pass to the limit $n\rightarrow\infty$ and denote $P = P^{\mu_0}$ to get
\begin{align*}
\E^{P}[f(X_A[t])g(X_{\partial^2A}[t])h(X_B[t])] = \E^{P}[\varphi(X_{\partial^2A}[t])g(X_{\partial^2A}[t])h(X_B[t])].
\end{align*}
This at once shows both that
\[
\E^P[f(X_A[t]) \, | \, X_{\partial^2A}[t]] = \varphi(X_{\partial^2A}[t]) = \E^{P^n}[f(X_A[t]) \, | \, X_{\partial^2A}[t]],
\]
for all bounded continuous $f$ and $n \ge n_0$, which proves Proposition \ref{th:specification-infinitegraph} below,
and also that $X_A[t]$ and $X_B[t]$ are conditionally independent given $X_{\partial^2A}[t]$ under $P$. The latter proves the first statement in Theorem \ref{th:conditionalindependence-infinitegraph}, except for the fact that we have only proven this conditional independence when $A$ and $B$ are finite. 
Because $B \subset V \setminus (A \cup \partial^2A)$ was an arbitrary finite set and $\{X_B[t] : B \subset A \cup \partial^2A\}$ generates the same $\sigma$-field as $X_{A \cup \partial^2A}[t]$, we deduce that that $P_t$ is a \mrft. 
The second statement follows from the same argument as in \eqref{eq:infinitetime} using the \mrft \ property of $P_t = P_t^{\mu_0}$.
This completes the proof.
\end{proof}

We recapitulate two results that were established in the course of the proof, which may be of independent interest,  and which are used in the proof of Theorem \ref{th:gibbsuniqueness} in the next section. 

\begin{remark}
  \label{rem-pnmrf}
  Note that the first paragraph of the proof above shows that if $(G, b, \sigma, \mu_0)$ satisfy Assumption \ref{assumption:B} and
for $G_n = (V_n, E_n)$,   $n \in \N$, is as in Lemma  \ref{le:MRFprojections}, and 
  $b^n$, $P^n$, $P^n_t$, $n \in \N$, $t > 0$, are as defined at the beginning of Section \ref{sec-pf-infinitegraph},
  then $P^n_t[V_n]$ is a \mrft\  for each $n \in \N$ and $t > 0$. 
  \end{remark}

\begin{proposition} \label{th:specification-infinitegraph}
  Suppose $(G, b, \sigma, \mu_0)$ satisfy Assumption \ref{assumption:B}, and let $G_n = (V_n, E_n), n \in \N,$ be the sequence of
    graphs constructed from $G$ as
in Lemma  \ref{le:MRFprojections}. 
  Fix $t > 0$, and let $P_t = P_t^{\mu_0}$ be the law of
    the unique weak solution to the
    SDE \eqref{fingraph-SDE}  with initial law $\mu_0$.
  Then, for  $n \ge 3$, and $A \subset V_{n-3}$,
  $\partial^2_GA \subset V_n$ and, $P_t$-almost surely,
\[
P_t[A \, | \, \partial^2_G A] = P^n_t[A \, | \, \partial^2_G A].
\]
\end{proposition}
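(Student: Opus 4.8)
The plan is to compute the conditional kernel $P^n_t[A\,|\,\partial^2_G A]$ explicitly on the canonical space, show that it is the \emph{same} kernel for every $n$ with $A\subset V_{n-3}$, and then identify this common kernel with $P_t[A\,|\,\partial^2_G A]$ by invoking the limit already performed in the proof of Theorem~\ref{th:conditionalindependence-infinitegraph}. The inclusion $\partial^2_G A\subset V_n$ is the easy part: since each application of $\partial_G$ raises the distance to the root by at most one, $A\subset V_{n-3}$ forces $\partial^2_G A\subset V_{n-1}\subset V_n$; Lemma~\ref{le:MRFprojections}(i) moreover gives $\partial^2_{G_n}A=\partial^2_G A=:\partial^2 A$.

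For the main equality I would work on $(\C^V,\mathrm{Borel},P^{*,\mu_0})$ and factor the density in \eqref{dPdPstar}. By \eqref{driftn}, $b^n_v=0$ on $U_n$ and $b^n_v=b_v$ with $N_v(G_n)=N_v(G)$ on $V_{n-2}$, so the associated martingale $M^n_v$ vanishes for $v\in U_n$ and equals the infinite-graph martingale $M_v$ of \eqref{martv} for $v\in V_{n-2}$. Feeding in the $2$-clique factorization of $\mu_0[V_n]$ from Lemma~\ref{le:MRFprojections}(iv), I would write
\[
\frac{dP^n_t[V_n]}{dP^{*,\mu_0}_t[V_n]}=\Phi_A\cdot\Psi^n,\qquad \Phi_A:=\prod_{K\in\mathcal K_A}f_K(X_K(0))\prod_{v\in A\cup\partial_G A}\EE_t(M_v),
\]
where $\mathcal K_A:=\{K\in\mathrm{cl}_2(G):K\cap A\neq\emptyset\}$ and $\Psi^n$ gathers all remaining factors. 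The two facts to check are that $\Phi_A$ collects exactly the factors depending on $X_A$, is $\sigma(X_{A\cup\partial^2 A}[t])$-measurable, and is \emph{independent of $n$}, while $\Psi^n$ is free of $X_A$. For the first, $A\cup\partial_G A\subset V_{n-2}$ ensures the martingales in $\Phi_A$ carry the true drift $b_v$ and neighborhood $N_v(G)$, and since $K\subset A\cup\partial^2 A$ for each $K\in\mathcal K_A$ (as in Lemma~\ref{le:specification-abstract}), part (iv) of Lemma~\ref{le:MRFprojections} identifies its clique function with the universal $f_K$. For the second, the $U_n$-factor $g_n$, the clique functions $f_K$ with $K\cap A=\emptyset$, and the martingales $M^n_v$ with $v\notin A\cup\partial_G A$ all avoid $X_A$, because $U_n\cap A=\emptyset$ and $\{v\}\cup N_v(G_n)$ misses $A$ when $v\notin A\cup\partial_G A$.

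The conclusion then follows from a cancellation that hinges on $P^{*,\mu_0}_t$ being a \emph{product} measure, exactly as in the computation inside Lemma~\ref{le:specification-abstract}. Since $\Psi^n$ is measurable with respect to the coordinates off $A$, it factors out of both the numerator and the normalizer in the Bayes formula for $P^n_t[A\,|\,\partial^2 A]$, leaving $dP^n_t[A\,|\,\partial^2 A]/dP^{*,\mu_0}_t[A]=\Phi_A\big/\int\Phi_A\,dP^{*,\mu_0}_t[A]$, a kernel manifestly independent of $n$. Taking $n_0$ to be the smallest integer $\ge 4$ with $A\cup\partial^2 A\subset V_{n_0-3}$, the proof of Theorem~\ref{th:conditionalindependence-infinitegraph} already gives $P_t[A\,|\,\partial^2 A]=P^{n_0}_t[A\,|\,\partial^2 A]$; combined with the $n$-independence just established this yields $P_t[A\,|\,\partial^2 A]=P^n_t[A\,|\,\partial^2 A]$ for every $n\ge 3$ with $A\subset V_{n-3}$. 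The almost-sure qualifier is with respect to $P^{*,\mu_0}_t[\partial^2 A]$, and hence to $P_t[\partial^2 A]$, since the latter is absolutely continuous with respect to the former (by the uniform entropy bound of Lemma~\ref{le:tightness} and lower semicontinuity of relative entropy).

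The step I expect to be the real obstacle is justifying the universality of $\Phi_A$, which is why one cannot simply invoke Proposition~\ref{pr:specification-finitegraph} to compare $P^n_t$ with $P^{n_0}_t$: when $n<n_0$ the set $\partial^2 A$ can meet the annulus $U_n$, on which $G_n$ carries artificial clique edges and zero drift, so the edge-agreement hypothesis \eqref{asmp:edgesets} fails on any $V^*$ containing $\partial^2 A$. The resolution is to argue at the level of the density itself: the artificial $U_n$-edges can only create $2$-cliques that are either genuine cliques of $G$ (carrying the universal $f_K$) or receive trivial clique functions, and the vanishing drift on $U_n$ annihilates any martingale contribution there, so none of the $n$-dependent data can leak into the factor $\Phi_A$ that governs the conditional law.
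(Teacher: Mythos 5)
Your argument follows essentially the same route as the paper's: the paper likewise obtains the $n$-independence of $P^n_t[A\,|\,\partial^2A]$ from the $2$-clique factorization of $dP^n_t[V_n]/dP^{*,\mu_0}_t[V_n]$ together with the cancellation afforded by the product structure of $P^{*,\mu_0}_t$ (this is exactly the content of Lemma \ref{le:specification-abstract} and Proposition \ref{pr:specification-finitegraph}, applied with $G=G_n$, $H=G_{n_0}$, $V^*=V_{n_0-3}$), and then identifies the common kernel with $P_t[A\,|\,\partial^2A]$ by the same limit passage you invoke (Lemma \ref{le:infinitegraphlimit} plus the \mrft\ property of $P^n$; this is not circular, since the proposition is a byproduct of that limit argument in the paper as well). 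Where you genuinely depart from the paper is in trying to cover every $n$ with $A\subset V_{n-3}$ rather than only $n\ge n_0$ with $A\cup\partial^2A\subset V_{n_0-3}$, and your diagnosis is correct: when $\partial^2A$ meets the annulus $U_n$, the edge-agreement hypothesis \eqref{asmp:edgesets} fails and Proposition \ref{pr:specification-finitegraph} cannot be invoked, so the paper's written proof only establishes the identity for the larger $n$. Spotting that discrepancy between the proposition's stated range and the proof is a real contribution.

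The gap is precisely in your resolution of those borderline $n$. First, on an infinite graph $\mu_0$ admits no global $2$-clique factorization, so "the universal $f_K$" appearing in $\Phi_A$ is not defined a priori; the $f_K$ exist only through Hammersley--Clifford on each finite $V_m$, made pairwise consistent via Lemma \ref{le:MRFprojections}(iv), and that lemma guarantees consistency only for cliques $K\subset V_{m-3}$ --- whereas the cliques in $\mathcal K_A$ can reach into $V_{n-1}\setminus V_{n-3}$ exactly when $\partial^2A\not\subset V_{n-3}$. Second, for such $n$ the collection of $X_A$-dependent initial-law factors is indexed by $\mathrm{cl}_2(G_n)$, not $\mathrm{cl}_2(G)$: a $2$-clique of $G_n$ meeting $A$ may contain two vertices of $U_n\cap\partial^2_GA$ joined only by an artificial edge and hence fail to be a $2$-clique of $G$, so your $\Phi_A$, defined over $\mathrm{cl}_2(G)$, does not obviously exhaust the factors depending on $X_A$. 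Your closing sketch (genuine cliques carry the universal $f_K$, artificial ones get trivial factors, the drift vanishes on $U_n$) is the right idea, but making it rigorous requires running the explicit construction in the proof of Lemma \ref{le:MRFprojections}(iv) inside a larger ambient ball $G_m$ with $A\cup\partial^2A\subset V_{m-3}$ and checking that the projection to $V_n$ assigns the consistent $f_K$ to every $G$-clique meeting $A$ and the constant $1$ to the extra $G_n$-cliques, with the lone new factor $g_n$ supported on $U_n$ (which misses $A$ and therefore cancels). For $n\ge n_0$ your argument coincides with the paper's and is correct, and that is the only range used downstream; the unproved part is confined to the extension you volunteered.
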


\section{Proof of  Gibbs measure properties} \label{se:gibbsuniqueness}

In this section we prove the Gibbs uniqueness property of Theorem \ref{th:gibbsuniqueness}.  
Recall the definition of $P^{*,\mu_0}$ as the law of the solution of \eqref{eq:canonical_law} initialized at $\mu_0$. \\

\noindent\textit{Proof of Theorem \ref{th:gibbsuniqueness}.}
Let $(G, b, \sigma, \mu_0)$ satisfy Assumption \ref{assumption:B}, and  let
    $P^{\mu_0}$ be the unique solution of the SDE system \eqref{fingraph-SDE} with initial law $\mu_0$. 
        We work again on the canonical space $(\C^V,\mathrm{Borel},P^{*,\mu_0})$, with
    $X_V=(X_v)_{v \in V}$ denoting the canonical process.
    Define the sets 
    $\MLset = \MLset (\mu_0)$ and $\MRset = \MRset (\mu_0)$ as in the
    statement of the theorem.
For any $\nu_0 \in \MLset$, 
 the SDE system \eqref{fingraph-SDE} is well-posed starting from $\nu_0$, 
and we let $P^{\nu_0} \in \P(\C^V)$ denote the law of this solution. 
The proof of the theorem is broken down into five claims. \\

\noindent
    {\em Claim 1. } Suppose $(G,b,\sigma,\mu_0)$   satisfies Assumption \ref{assumption:B}.  If 
      $\nu_0 \in \MLset$, then $(G,b,\sigma,\nu_0)$  also satisfies Assumption \ref{assumption:B}, and for every finite set $A \subset V$ and $t > 0$ we have $P^{\nu_0}_t [A] \sim P^{\mu_0}_t[A]$.
    \begin{proof}[Proof of Claim 1.]  
		First, suppose  $\nu_0 \in \MLset$.  
		By the definition of $\MLset$, $\nu_0$ has a finite  second moment.  Moreover, for each finite set $A \subset V$, we have $\nu_0[A] \sim \mu_0^*[A]$
		since $\nu_0 \in \G_2(\mu_0)$ implies (by Definition \ref{def-Gibbs}) that  $\nu_0[A] \sim \mu_0[A]$,  and Assumption (\ref{assumption:B}.1) ensures that 
		$\mu_0[A] \sim \mu_0^*[A]$.   Thus $\nu_0$ satisfies
		Assumption (\ref{assumption:B}.1).
		
		Now let $t > 0$, and let $A \subset V$ be finite. 
		From Lemma \ref{le:infinitegraphlimit} it follows that  $P^{n,\mu_0} \to P^{\mu_0}$ weakly.
		It then follows from \eqref{def:entropybound}, \eqref{def:entropybound-2} and the lower semicontinuity of relative entropy that $P^{\mu_0}_t[A] \ll  P^{*,\mu_0}_t[A]$ and $P^{*,\mu_0}_t[A] \ll P^{\mu_0}_t[A]$.
		Therefore $P^{\mu_0}_t[A] \sim  P^{*,\mu_0}_t[A]$, and similarly $P^{\nu_0}_t[A] \sim P^{*,\nu_0}_t[A]$. 
		Finally, note that $P^{*,\mu_0}[A]$ (resp.\ $P^{*,\nu_0}[A]$) is the law of the solution of the SDE system
		\[
		dX_v(t) = \sigma_v(t,X_v)\,dW_v(t), \qquad v \in A,
		\]
		with initial law $\mu_0[A]$ (resp.\ $\nu_0[A]$), and it follows from $\nu_0[A] \sim \mu_0[A]$ that $P_t^{*,\mu_0}[A] \sim P_t^{*,\nu_0}[A]$. Putting it together, we have 
		$P^{\nu_0}_t [A] \sim P^{*,\nu_0}_t[A] \sim P^{*,\mu_0}_t[A] \sim P^{\mu_0}_t[A]$. 
	\end{proof}

\noindent 
    {\em Claim 2. } For any $Q \in \MRset$, we have $Q_0 := Q\circ(X_V(0))^{-1} \in \MLset$.
\begin{proof}[Proof of Claim 2.]  The proof of this claim  is straightforward:
  fix  $Q  \in \MRset$, and set  
  $Q_0 := Q \circ (X_V(0))^{-1}$.
  Then by the definition of $\MRset$, we have
   $Q_t \in \G_2(P^{\mu_0}_t)$ for all $t \ge 0$ and
  $\sup_{v \in V} \int_{\R^d} |x_v|^2\,Q_0(dx) < \infty$.  Taking  
  $t=0$ gives $Q_0  \in \G_2(\mu_0)$,
  where we have used the elementary fact that $P^{\mu_0}_0= P^{\mu_0} \circ (X_V(0))^{-1}=\mu_0$. 
Thus $Q_0$ belongs to $\MLset$.
\end{proof}

\noindent 
    {\em Claim 3.} If $\nu_0 \in \MLset$ then
      $P^{\nu_0} \in \MRset$.
    \begin{proof}[Proof of Claim 3.] Fix $\nu_0 \in \MLset$.  Then
      by the first assertion of Claim 1, for every finite set $A \subset V$ and $t \geq 0$, $P^{\nu_0}_t[A] \sim P^{\mu_0}_t[A]$.     
  So it only remains to show that for every $t  > 0$, 
\begin{align}
  P^{\nu_0}_t[A \,|\, \partial^2 A] =  P^{\mu_0}_t[A \,|\, \partial^2 A],
  \quad \text{for finite } A \subset V. \label{pf:gibbsclaim0}
\end{align}

First, recall that Claim 1 also shows that  $(G,b,\sigma,\nu_0)$ satisfies Assumption \ref{assumption:B}. Next, let $G_n = (V_n, E_n)$ be the increasing sequence of finite graphs defined in 
  Lemma \ref{le:MRFprojections}, and  let $P^{\mu_0,n}, P^{\nu_0,n} \in \P(\C^V)$ denote the law of the solution of the corresponding SDE system \eqref{fingraph-SDEn} with initial laws
  $\mu_0[V_n]$ and $\nu_0[V_n]$, respectively.
Throughout this proof, the boundary operator $\partial$ is always with respect
to the infinite graph $G$.
  Fix $A \subset V$ finite, and fix $n$ large enough that $A \subset V_{n-3}$, recalling that $V_n$ was defined in Section \ref{subs-approxmeas}. 
By Proposition \ref{th:specification-infinitegraph}, we have both 
\begin{align}
\begin{split}
P^{\mu_0}_t[A | \partial^2 A] &=  P^{\mu_0,n}_t[A | \partial^2 A], \\
P^{\nu_0}_t[A | \partial^2 A] &=  P^{\nu_0,n}_t[A | \partial^2 A].
\end{split} \label{pf:gibbsclaim1}
\end{align}
By Lemma \ref{le:MRFprojections}(iii), $\nu_0[V_n]$ is a \mrft.
Also since $\nu_0[V_n] \sim \mu_0^*[V_n]$ implies $d\nu_0[V_n]/d\mu_0^*[V_n] > 0$,
by Proposition \ref{th:hammersleyclifford2} there is a 2-clique factorization
\begin{align}
\frac{d\nu_0[V_n]}{d\mu_0^*[V_n]}(x_{V_n}) = \prod_{K \in \mathrm{cl}_2(G_n)}g^n_K(x_K), \label{pf:gibbs1.5}
\end{align}
for some measurable functions $g^n_K : (\R^d)^K \to \R_+$. 
Similarly $\mu_0[V_n]$ admits a 2-clique factorization, 
\begin{align}
\frac{d\mu_0[V_n]}{d\mu_0^*[V_n]}(x_{V_n}) = \prod_{K \in \mathrm{cl}_2(G_n)}f^n_K(x_K), \label{pf:gibbs2}
\end{align}
for some measurable functions $f^n_K : (\R^d)^K \to \R_+$. We claim (and justify below) that $f^n_K$ and $g^n_K$ can be chosen to be consistent, i.e., so that
\begin{align}
f^n_K \equiv g^n_K, \quad \text{for all } K \in \mathrm{cl}_2(G_n) \text{ with } K \cap A \neq \emptyset. \label{pf:gibbsclaim3}
\end{align}
To see this, let $I_{V_n}=(I_v)_{v \in V_n}$ denote the canonical random variable on the probability space $(\R^d)^{V_n}$. Define
\[
\widehat{f}^n(I_{U_n}) = \E^{\mu_0^*}\left[\left.\frac{d\mu_0[V_n]}{d\mu_0^*[V_n]}\,\right|\,I_{U_n}\right], \qquad  \widehat{g}^n(I_{U_n}) = \E^{\mu_0^*}\left[\left.\frac{d\nu_0[V_n]}{d\mu_0^*[V_n]}\,\right|\,I_{U_n}\right].
\]
Recalling that $U_n = V_n \setminus V_{n-2}$, and using \eqref{pf:gibbs1.5}, we have
\begin{align*}
\frac{d\nu_0[V_{n-2} \, | \, U_n]}{d\mu_0^*[V_{n-2} \, | \, U_n]} &= \frac{ \frac{d\nu_0[V_n]}{d\mu_0^*[V_n]} }{ \widehat{g}^n(I_{U_n})} = \frac{1}{\widehat{g}^n(I_{U_n})}\prod_{K \in \mathrm{cl}_2(G_n)}g^n_K(I_K).
\end{align*}
Applying the same argument to $\mu_0$ rather than $\nu_0$ and using \eqref{pf:gibbs2}, we also obtain 
\begin{align*}
\frac{d\mu_0[V_{n-2} \, | \, U_n]}{d\mu_0^*[V_{n-2} \, | \, U_n]} &= \frac{ \frac{d\mu_0[V_n]}{d\mu_0^*[V_n]} }{ \widehat{f}^n(I_{U_n})} .
\end{align*}
Further, recognizing that $U_n = \partial^2 V_{n-2}$,
since $\nu_0 \in {\mathcal G}_2(\mu_0)$ we see that 
\[
\nu_0 [V_{n-2}| U_n] = \mu_0 [V_{n-2}| U_n].
\]
Combining the last three displays, we find
\begin{align*}
\frac{d\mu_0[V_n]}{d\mu_0^*[V_n]}(I_{V_n}) &= \frac{d\nu_0[V_{n-2} \, | \, U_n]}{d\mu_0^*[V_{n-2} \, | \, U_n]}\widehat{f}^n(I_{U_n}) = \frac{\widehat{f}^n(I_{U_n})}{\widehat{g}^n(I_{U_n})}\prod_{K \in \mathrm{cl}_2(G_n)}g^n_K(I_K).
\end{align*}
Comparing this with \eqref{pf:gibbs2} and noting that $U_n \in \mathrm{cl}_2(G_n)$, we can thus take $f^n_K \equiv g^n_K$ in \eqref{pf:gibbs2} for $K \in \mathrm{cl}_2(G_n) \setminus \{U_n\}$ and $f^n_{U_n} \equiv g^n_{U_n}\widehat{f}^n/\widehat{g}^n$. This proves the above consistency claim; indeed, since $A \subset V_{n-3}$, we know that $U_n$ does not intersect $A \cup \partial A$.

Let $\mathcal{K} = \{ K \in \mathrm{cl}_2(G_n) : K \cap A \neq \emptyset\}$. Using the consistency property \eqref{pf:gibbsclaim3}, we can finally conclude from Proposition \ref{pr:specification-finitegraph}  (with $V_G=V_H=V_n$, $b_v^H = b_v^G = b_v$, $\sigma_v^H = \sigma_v^H = \sigma_v$, $\mu_0^H = \mu_0$ and
$\mu_0^G = \nu_0$) that $P^{\nu_0,n}_t[A \,|\, \partial^2 A] =  P^{\mu_0,n}_t[A \,|\, \partial^2 A]$. Recalling \eqref{pf:gibbsclaim1}, this completes the proof of \eqref{pf:gibbsclaim0}.
\end{proof}

    Together, claims 2 and 3 prove \eqref{claim}. We now prove the last assertion of the theorem. \\

\noindent
    {\em Claim 4. } If $Q \in \P(\C^V)$ satisfies $Q_t \in \G_2(P^{\mu_0}_t)$ for all $t \ge 0$ and also $Q \circ (X_V(0))^{-1} = \mu_0$, then $Q=P^{\mu_0}$.
\begin{proof}[Proof of Claim 4.] 
      As in the proof of Claim 3, we  let $G_n = (V_n, E_n)$ be the increasing sequence of finite graphs defined in 
  Lemma \ref{le:MRFprojections} and  let the boundary operator $\partial$  always be with respect
to the infinite graph $G$.  Also, let $P^n = P^{\mu_0,n} \in \P(\C^V)$ denote the law of the solution of the corresponding SDE system \eqref{fingraph-SDEn} with initial law 
$\mu_0[V_n]$.
Now,  fix a finite set $A \subset V$ and $T \in (0,\infty)$. Let $n_0$ denote the smallest integer such that $A \cup \partial^2A \subset V_{n_0-3}$. Define the martingales
\begin{align*}
M^n_v(t) = \int_0^t  (\sigma_v\sigma_v^\top)^{-1}b^n_v(s,X_v,X_{N_v(G_n)}) \cdot dX_v(s),  \qquad n \in \N, v \in V_n.  
\end{align*}
Due to Assumptions (\ref{assumption:B}.2) and  (\ref{assumption:B}.3), 
it follows from  Lemma \ref{lem:Girsanov-justification} 
  (with $\QQ= P^{*,\mu_0}$, $X=(X_v)_{v \in V_n}$ and $f(t,x) = (\sigma_v^{-1}b^n_v(t,x_v,x_{N_v(G_n)}))_{v \in V_n}$) 
  that $\EE(M^n_v)$  is a $P^{*,\mu_0}$-martingale.  
Thus,  by Girsanov's theorem \cite[Corollary 3.5.2]{karatzas-shreve}, we may write
 \begin{align}
   \label{RNVn}
\frac{dP^n_t[V_n]}{dP^{*,\mu_0}_t[V_n]} &= \prod_{v \in V_n}\EE_t(M^n_v).  
\end{align}
Now, by applying Remark \ref{rem-pnmrf} to $(G,b,\sigma,\mu_0)$ and $(G,0,\sigma,\mu_0)$, respectively, it
follows that the measures $P^n_t[V_n]$ and  $P^{*,\mu_0}_t[V_n]$ are \mrft s  with respect to $G_n$.
Hence, for $n \ge n_0$, 
\begin{align*}
\frac{dP^n_t[A \, | \, \partial^2A]}{dP^{*,\mu_0}_t[A \, | \, \partial^2A]} &= \frac{dP^n_t[A \, | \, V_n \backslash A]}{dP^{*,\mu_0}_t[A \, | \, V_n \backslash A]}  \\
	&= \left.\frac{dP^n_t[V_n]}{dP^{*,\mu_0}_t[V_n]} \right/ \E^{P^{*,\mu_0}}\left[\left. \frac{dP^n_t[V_n]}{dP^{*,\mu_0}_t[V_n]} \right| X_{V_n \backslash A}[t]\right] \\
	&= \left.\prod_{v \in V_n}\EE_t(M^n_v)\right/ \E^{P^{*,\mu_0}}\left[\left.\prod_{v \in V_n}\EE_t(M^n_v)\right| X_{V_n \backslash A}[t]\right].
\end{align*}
For $v \in V_n \backslash (A \cup \partial A)$, $\EE_t(M^n_v)$ is measurable with respect to $X_{V_n \backslash A}[t]$ and thus factors out of the conditional expectation and cancels. Thus,
\begin{align}
  \label{RNcond}
\frac{dP^n_t[A \, | \, \partial^2A]}{dP^{*,\mu_0}_t[A \, | \, \partial^2A]} &= \left.\prod_{v \in A \cup \partial A}\EE_t(M^n_v)\right/ \E^{P^{*,\mu_0}}\left[\left.\prod_{v \in A \cup \partial A}\EE_t(M^n_v)\right| X_{V_n \backslash A}[t]\right].
\end{align}
Because $Q_t \in \G_2(P^{\mu_0}_t)$ by assumption, we have $Q_t[A \, | \, \partial^2A] = P^{\mu_0}_t[A \, | \, \partial^2A]$. By Proposition Proposition \ref{th:specification-infinitegraph}, we have $P^{\mu_0}_t[A \, | \, \partial^2A] = P^{\mu_0,n}_t[A \, | \, \partial^2A]$, and it follows that 
 the density $dQ_t[A \, | \, \partial^2A] / dP^{*,\mu_0}_t[A \, | \, \partial^2A]$ is given by the same expression \eqref{RNcond}.

Now take $A=V_{n-2}$, and note that $U_n := V_n \setminus V_{n-2} = \partial^2 V_{n-2}$.
Because $Q_t[U_n]  \sim P^{*,\mu_0}_t[U_n]$ by assumption, and because both $Q$ and $P^{*,\mu_0}$ start from the same initial state distribution $\mu_0$, we may use the martingale representation theorem (specifically, apply Remark \ref{re:mtgrep} below with $\xi= dQ_T[U_n]/dP^{*,\mu_0}_T[U_n]$,  which clearly satisfies $\E^{P^{*,\mu_0}}[\xi] = 1$) 
to find progressively measurable functions $r^n_v : [0,T] \times \C^{U_n} \to \R^d, v \in U_n,$ which are
  $dt\otimes dP^{*,\mu_0}$-square-integrable
  such that in terms of  the associated
   $X_{U_n}$-adapted continuous martingales 
  $R^n_v(t) = \int_0^t r^n_v(s,X_{U_n}) \cdot dX_v(s)$, $t \in [0,T]$, $v \in U_n$, we can write for $t \in [0,T]$,
  \begin{align*}
\frac{dQ_t[U_n]}{dP^{*,\mu_0}_t[U_n]} = \prod_{v \in U_n}\EE_t(R^n_v). 
\end{align*}
  Note that these martingales are orthogonal,   that is, the  covariation process $[R^n_v,R^n_u]$ is identically zero for $v \neq u$.
Thus, since $U_n \cap V_{n-2} = \emptyset$, $U_n \cup V_{n-2} = V_n$ and $V_{n-1} = V_{n-2} \cup \partial V_{n-2}$,  applying \eqref{RNcond} with $A = V_{n-2}$ we have
\begin{align*}
\frac{dQ_t[V_n]}{dP^{*,\mu_0}_t[V_n]} &=  \frac{dQ_t[V_{n-2} \, | \, U_n]}{dP^{*,\mu_0}_t[V_{n-2} \, | \, U_n]} \ \frac{dQ_t[U_n]}{dP^{*,\mu_0}_t[U_n]}  \\
	&= \prod_{v \in U_n}\EE_t(R^n_v)\left.\prod_{v \in V_{n-1}}\EE_t(M^n_v)\right/ \E^{P^{*,\mu_0}}\left[\left.\prod_{v \in V_{n-1}}\EE_t(M^n_v)\, \right| \, X_{U_n}[t]\right].
\end{align*}  
The process in the denominator is a positive martingale (as the optional projection of a martingale) adapted to $X_{U_n}$ and thus,
again using the  martingale representation theorem (this time applying Remark \ref{re:mtgrep} below with 
  $\xi = \E^{P^{*,\mu_0}}\left[\left.\prod_{v \in V_{n-1}}\EE_T(M^n_v) \, \right| \,  X_{U_n}[T]\right]$ and invoking \eqref{RNVn} to conclude that
  $\E^{P*,\mu_0}[\xi] = \E^{P*,\mu_0}[ dP_t^n[V_{n-1}]/dP_t^{*, \mu_0}[V_{n-1}]] = 1$),   
there exist square integrable, progressively measurable functions $\widetilde{r}^n_v$ (as above) and associated
$X_{U_n}$-adapted continuous martingales  $\widetilde{R}^n_v(t) = \int_0^t \widetilde{r}^n_v (s, X_{U_n})\cdot dX_v(s),$ 
$v \in U_n$, such that   
\begin{align*} 
\E^{P^{*,\mu_0}}\left[\left.\prod_{v \in V_{n-1}}\EE_t(M^n_v) \, \right| \,  X_{U_n}[t]\right] = \prod_{v \in U_n}\EE_t(\widetilde{R}^n_v),
\end{align*}
Now note that, for any continuous martingales $Z$ and $(Z_i)_{i \in I}$, with $I$ a finite index set, we have the identities $1/\EE(Z) = \EE(-Z)e^{[Z]}$ and
\begin{align*}
\prod_{i \in I} \EE(Z_i) &= \exp\left(\sum_{i \in I}Z_i - \frac12 \sum_{i \in I}[Z_i]\right) \\
	&= \exp\left(\sum_{i \in I}Z_i - \frac12 \Big[ \sum_{i \in I}Z_i \Big] + \frac12\sum_{i,j \in I, \, i\neq j}[Z_i,Z_j]\right) \\
	&= \EE\left(\sum_{i \in I}Z_i\right)\exp\left(\frac12\sum_{i,j \in I, \, i\neq j}[Z_i,Z_j]\right),
\end{align*}
where $[Z_i,Z_j]$ denotes the covariation process.
Hence,
\begin{align*}
\frac{dQ_t[V_n]}{dP^{*,\mu_0}_t[V_n]} &= \prod_{v \in U_n}\EE_t(R^n_v)\EE_t(-\widetilde{R}^n_v)\exp([\widetilde{R}^n_v](t))   \prod_{v \in V_{n-1}}\EE_t(M^n_v)  \\
	&= \prod_{v \in V_{n-2}}\EE_t(M^n_v)   \prod_{v \in V_{n-1} \setminus V_{n-2}}\EE_t(M^n_v + R^n_v - \widetilde{R}^n_v)\exp\left([M^n_v-\widetilde{R}^n_v,R^n_v-\widetilde{R}^n_v](t) \right)  \\
	&\quad \cdot \prod_{v \in V_n \setminus V_{n-1}}\EE_t(R^n_v - \widetilde{R}^n_v)\exp\left([\widetilde{R}^n_v,\widetilde{R}^n_v - R^n_v](t) \right).
\end{align*}
Recalling the orthogonality properties of $R^n_v$ and $\widetilde{R}^n_v$ mentioned above, we see that we can write $Z_t := dQ_t[V_n]/dP^{*,\mu_0}_t[V_n]$ in the form $Z(t) = \EE_t(N)e^{A(t)}$, where $N$ is a continuous square-integrable martingale and $A(t)$ is square-integrable and a.s.\ absolutely continuous with $A(0)=0$. Since $Z$ is a martingale, we necessarily have $A \equiv 0$; indeed, It\^o's formula gives $dZ(t)=Z(t)(dN(t) + dA(t))$, and for $Z$ to be a martingale we must have $dA(t)=0$.
It follows that 
\begin{align*}
\frac{dQ_t[V_n]}{dP^{*,\mu_0}_t[V_n]} &= \prod_{v \in V_{n-2}}\EE_t(M^n_v) \prod_{v \in V_{n-1} \setminus V_{n-2}}\EE_t(M^n_v + R^n_v - \widetilde{R}^n_v) \prod_{v \in V_n \setminus V_{n-1}}\EE_t(R^n_v - \widetilde{R}^n_v).
\end{align*} 
Since $Z$ is a $P^{*,\mu_0}$-martingale,
  Girsanov's theorem \cite[Corollary 3.5.2]{karatzas-shreve} can be applied, using
   the definition of $M_v^n$, to deduce that $Q_t[V_n]$ is the law of a solution $(X^n_v[t])_{v \in V_n}$ of the SDE system (perhaps on an auxiliary the probability space)
\begin{alignat*}{3}
dX^n_v(s) &= b_v(s,X^n_v,X^n_{N_v(G)})\,ds + \sigma(s,X^n_v)\,dB_v(s), &&\text{ for } v \in V_{n-2}, \\
dX^n_v(s) &= \left((r^n_v- \widetilde{r}^n_v)(s,X^n_{U_n}) + b^n_v(s,X^n_v,X^n_{N_v(G_n)})\right)ds + \sigma(s,X^n_v)\,dB_v(s), &&\text{ for } v \in V_{n-1} \backslash V_{n-2}, \\
dX^n_v(s) &= (r^n_v- \widetilde{r}^n_v)(s,X^n_{U_n})\,ds + \sigma(s,X^n_v)\,dB_v(s), &&\text{ for } v \in V_n \backslash V_{n-1},
\end{alignat*}
where $(B_v)_{v \in V_n}$ are independent Brownian motions.

Define $X^n_v \equiv 0$ for $v \notin V_n$. Since the sets $V_n$ increase to $V$, it is easily shown as in Lemma \ref{le:infinitegraphlimit} that, as $n \rightarrow \infty$, $(X^n_v[T])_{v \in V}$ converges in law in $\C_T^V$ to a solution of the infinite SDE system \eqref{fingraph-SDE} with initial distribution $\mu_0$, restricted to the interval $[0,T]$. Recalling that  $(X^n_v(0))_{v \in V} \sim \mu_0$ and that the solution to the infinite SDE system is  unique in law by Assumption (\ref{assumption:B}.4), we conclude that  $(X^n_v[T])_{v \in V}$ converges in law to $P_T^{\mu_0}$.  But $X^n_{V_n}[T]$ has law $Q_T[V_n]$ by construction, which implies $X^n_V[T]$ converges in law to $Q_T$. Therefore $Q_T = P_T^{\mu_0}$. 
Since $T \in (0,\infty)$ was arbitrary,  $Q= P^{\mu_0}$, which completes the proof of Claim 4.
\end{proof}

To complete the proof of the theorem, it only remains
to establish  the bijection between the two sets in \eqref{claim}.  However,
we now show that this is a simple consequence of the last claim. \\

    \noindent 
        {\em Claim 5. }
        The map $Q \mapsto Q \circ (X_V(0))^{-1}$ defines a bijection between  the sets 
    $\MRset$ and $\MLset$.
    \begin{proof}[Proof of Claim 5.] 
      Let $Q \in \MRset$, and set $\nu_0 := Q \circ (X_V(0))^{-1}$. By 
        Claim 2,  $\nu_0$ belongs to $\MLset$, 
        and by Claim 3,  $P^{\nu_0}$ lies in $\MRset$.  Since trivially 
        $P^{\nu_0} \circ (X_V(0))^{-1} = \nu_0$, to 
        prove the claim it suffices to prove that $Q = P^{\nu_0}$.
        By Claim 1,
        $(G,b,\sigma,\nu_0)$ satisfies Assumption \ref{assumption:B}, and thus Claim 4 applies with $\nu_0$ in place of $\mu_0$. That is, by applying Claim 4 to $\nu_0$ instead of $\mu_0$, we deduce that if $Q \in \P(\C^V)$ satisfies $Q_t \in \G_2(P^{\nu_0}_t)$ for all $t \ge 0$ and
        also $Q \circ (X_V(0))^{-1} = \nu_0$, then $Q=P^{\nu_0}$. By definition of $\MRset$ we have $Q_t \in \G_2(P^{\mu_0}_t)$ for all $t \ge 0$, and it follows from \eqref{pf:gibbsclaim0}, which was established in the proof of
        Claim 3, that $\G_2(P^{\nu_0}_t)=\G_2(P^{\mu_0}_t)$. This proves Claim 5, which completes the proof of Theorem \ref{th:gibbsuniqueness}.  
    \end{proof}

\begin{remark} \label{re:mtgrep} 
We sketch here the argument behind the use of the martingale representation theorem in the proof of Theorem \ref{th:gibbsuniqueness} above.
Recall that by Assumption (\ref{assumption:A}.3b) the SDE
system $dX_v(t) = \sigma_v(t,X_v)\,dW_v(t), \ v \in U_n$, with initial law $\mu_0$ is unique in law, with the law of the solution $X=(X_v)_{v \in U_n}$ given by $P^{*,\mu_0}[U_n]$.  
This implies uniqueness of the associated martingale problem (cf.\ \cite[Corollary 5.4.9]{karatzas-shreve}), which is known to imply that the solution has the predictable representation property (cf.\ \cite[Theorem V.25.1]{rogers-williams} or \cite[Theorem 2.7]{stroock1980extremal}), in the following sense: 
For $T < \infty$ and an $\F_T^X$-measurable random variable $\xi > 0$ with $\E[\xi]=1$, the martingale $Z(t)=\E[\xi \, | \, \F^X_t] > 0, t \in [0,T]$,  where $\F^X_t = \sigma(X(s):s \le t)$, can be represented as $Z(t) = 1 + \int_0^t\varphi(s,X) \cdot dX(s)$ for some predictable process $\varphi \colon [0,T] \times \C_T \to \Rmb$ satisfying $\int_0^T|\varphi(t,X)|^2\,dt < \infty$ a.s., recalling that $\sigma_v$ is uniformly bounded and nondegenerate. 
Then, for $t \in [0,T]$, setting $\psi(t,X)=\varphi(t,X)/Z(t)$, by It\^o's formula, we have $d\log Z(t) = \psi(t,X) \cdot dX(t) - \frac12\psi(t,X)^\top d[X](t)\psi(t,X)$.
Hence, $Z(t) = \EE_t(\int_0^\cdot \psi(s,X) \cdot dX(s))$, $t \in [0,T]$. 
\end{remark}

\appendix

\section{Proof of pathwise uniqueness under Lipschitz assumptions}
\label{ap:uniqueness-infSDE}

\begin{proof}[Proof of Proposition \ref{pr:uniqueness-infiniteSDE}]  
Let $(X_v)_{v \in V}$ and $(\widetilde{X}_v)_{v \in V}$ denote two solutions driven by the same Wiener processes and starting from the same initial states. 
Fix $T < \infty$.
For each $v \in V$ and $t \in [0,T]$,  by It\^{o}'s formula,  the boundedness
of $\sigma$ (see Assumption (\ref{assumption:B}.3)), the assumed Lipschitz condition on the drift and diffusion coefficients 
we have  
\begin{align*}
  \E \left[ \|X_v - \widetilde{X}_v\|_{*,t}^2 \right]& \le 2t
  \E \left[ \int_0^t \left| b_v(s,X_v(s),X_{N_v(G)}(s)) - b_v(s,\widetilde{X}_v(s),\widetilde{X}_{N_v(G)}(s))\right|^2 ds  \right] \\
	& \quad + 8\E \left[\int_0^t \left| \sigma_v(s,X_v(s)) - \sigma_v(s,\widetilde{X}_v(s))\right|^2 ds \right] \\
	& \le 4tK_T^2 \E \left[\int_0^t \left(\|X_v - \widetilde{X}_v\|_{*,s}^2 + \frac{1}{|N_v(G)|}\sum_{u \in N_v(G)} \|X_u - \widetilde{X}_u\|_{*,s}^2 \right) ds \right] \\
  & \quad + 8\bK_T^2\E \left[ \int_0^t \|X_v - \widetilde{X}_v\|_{*,s}^2 \,ds\right]. 
  \end{align*}
Hence,
\begin{align*}
\sup_{v \in V} \E \left[ \|X_v - \widetilde{X}_v\|_{*,t}^2 \right] & \le 8(tK_T^2+\bK_T^2) \int_0^t \sup_{v \in V} \E \left[ \|X_v - \widetilde{X}_v\|_{*,s}^2\right] ds.
\end{align*}
Complete the proof using Gronwall's inequality.
\end{proof}

\section{Justification for applying Girsanov's theorem}
\label{sec-Girsanov}

In this section we state a result that justifies our repeated application of Girsanov's theorem under the condition that the drift is progressively measurable and has  linear growth.
Lemma \ref{lem:Girsanov-justification} below is in fact a path-dependent multi-dimensional version of \cite[Theorems 5.1 and 8.1]{KlebanerLiptser2014when}.
A simpler proof is provided here for completeness.

Let $(\Omega,\F,\FF,\QQ)$ be a filtered probability space supporting a $\FF$-Brownian motion $W$ of dimension $m$ as well as an $\FF$-adapted process $X$ of dimension $d$ such that $X$ satisfies the SDE 
\begin{align}
	\label{eq:X-appendix}
	dX(t) = \sigma(t,X)\,dW(t), \quad X(0) \sim \mu, 
\end{align}
where $\mu \in \Pmc(\Rmb^d)$ and $\sigma : \R_+ \times \C \rightarrow \R^{d\times m}$ is bounded and progressively measurable.  Also, let $\E$ denote expectation with respect to $\QQ$. 
Fix a  progressively measurable $f: \R_+ \times \C \mapsto \R^m$, 
and define the stochastic integral 
\begin{equation*}
	M_t := \int_0^t f(s,X) \cdot dW_s, \qquad t \in [0, \infty), 
\end{equation*}
which is well defined (and a local martingale)
due to  the linear growth condition \eqref{linear-growth} imposed on $f$
in the lemma below.  Recall in what follows that $\|x\|_{*,t} = \sup_{s \in [0,t]} |x(s)|$. 

\begin{lemma}
	\label{lem:Girsanov-justification}
	Under the above setting, suppose for each $T \in (0,\infty)$
        there exists $C_T < \infty$ such that
	\begin{equation}
          \label{linear-growth}
	|f(t,x)| \le C_T\left(1 +  \|x(s)\|_{*,t} \right),
	\end{equation} 
	for all $t \in [0,T]$ and $x \in \C$.
	Then 
        the Doleans exponential $\{\EE_t(M)\}_{t \geq 0}$ defined in \eqref{def:doleans-exponential}
        is a true $\QQ$-martingale.
\end{lemma}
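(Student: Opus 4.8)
The plan is to exploit the fact that the non-negative local martingale $\EE(M)$ is automatically a supermartingale, so that it is a true martingale as soon as its expectation is constant in time. Since $\EE_0(M)=1$ and $t\mapsto\E[\EE_t(M)]$ is non-increasing, it suffices to prove $\E[\EE_t(M)]=1$ for every $t\ge0$. To this end I would localize: set $\tau_n:=\inf\{t\ge0:|X(t)|\ge n\}$, which increases to $\infty$ almost surely since $X$ has continuous paths. On $[0,\tau_n]$ the bound $|X|\le n$ together with the linear growth \eqref{linear-growth} gives $[M]_{t\wedge\tau_n}=\int_0^{t\wedge\tau_n}|f(s,X)|^2\,ds\le tC_t^2(1+n)^2<\infty$, so the stopped martingale $M^{\tau_n}$ satisfies Novikov's condition and $\EE(M^{\tau_n})=\EE_{\cdot\wedge\tau_n}(M)$ is a genuine martingale with $\E[\EE_{t\wedge\tau_n}(M)]=1$. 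In particular, $d\QQ_n:=\EE_{t\wedge\tau_n}(M)\,d\QQ$ defines a probability measure on $\F_t$.

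The next step is the decomposition
\[
1=\E^\QQ[\EE_{t\wedge\tau_n}(M)]=\E^\QQ\!\big[\EE_t(M)\mathbf 1_{\{\tau_n>t\}}\big]+\E^\QQ\!\big[\EE_{\tau_n}(M)\mathbf 1_{\{\tau_n\le t\}}\big].
\]
Because $\{\tau_n>t\}\uparrow\Omega$ almost surely and the integrand is non-negative, monotone convergence shows that the first term tends to $\E^\QQ[\EE_t(M)]$. The second term equals $\QQ_n(\tau_n\le t)$ by the very definition of $\QQ_n$ (note that $\EE_{\tau_n}(M)=\EE_{t\wedge\tau_n}(M)$ on $\{\tau_n\le t\}$). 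Hence $\E^\QQ[\EE_t(M)]=1$ will follow once I show $\QQ_n(\tau_n\le t)\to0$ as $n\to\infty$; this is the crux of the argument.

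To control $\QQ_n(\tau_n\le t)$ I would pass to the tilted dynamics. By Girsanov's theorem, under $\QQ_n$ the process $W^n:=W-\int_0^{\cdot\wedge\tau_n}f(s,X)\,ds$ is a Brownian motion, so that up to time $\tau_n$ the process $X$ solves $dX(s)=\sigma(s,X)f(s,X)\,ds+\sigma(s,X)\,dW^n(s)$. Using It\^o's formula, the Burkholder--Davis--Gundy inequality for the martingale part, the boundedness of $\sigma$ and the linear growth \eqref{linear-growth} of $f$, a routine Gronwall argument yields $\E^{\QQ_n}\big[\|X\|_{*,t\wedge\tau_n}^2\big]\le C(t)\big(1+\E^\QQ[|X(0)|^2]\big)$, where the constant $C(t)$ depends only on $t$, $C_t$ and $\sup|\sigma|$, and in particular \emph{not on $n$}. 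Since $|X(\tau_n)|=n$ on $\{\tau_n\le t\}$ by path continuity, Markov's inequality then gives $\QQ_n(\tau_n\le t)\le n^{-2}\,\E^{\QQ_n}[\|X\|_{*,t\wedge\tau_n}^2]\le C(t)(1+\E^\QQ[|X(0)|^2])/n^2\to0$, as desired.

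The main obstacle is precisely obtaining this moment bound \emph{uniformly in $n$}: it is here that the linear growth of $f$ is indispensable, as it keeps the tilted drift $\sigma f$ linearly bounded in $\|X\|_{*,s}$ and so lets Gronwall close with an $n$-independent constant, whereas a super-linear $f$ would allow the tilted process to escape and $\EE(M)$ to be only a strict supermartingale. One minor technical point is that the displayed bound presupposes $\int|x|^2\,\mu(dx)<\infty$; for a general $\mu\in\P(\R^d)$ this is circumvented by first conditioning on $X(0)=x$, which gives $\E^{\QQ_n}[\|X\|_{*,t\wedge\tau_n}^2\mid X(0)=x]\le C(t)(1+|x|^2)$ with the same uniform constant, hence $\QQ_n(\tau_n\le t\mid X(0)=x)\le C(t)(1+|x|^2)/n^2$ for $|x|<n$, and then integrating against $\mu$. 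The elementary facts that $n^{-2}\int_{|x|<n}|x|^2\,\mu(dx)\to0$ for any probability measure and that $\mu(|x|\ge n)\to0$ again force $\QQ_n(\tau_n\le t)\to0$, completing the proof.
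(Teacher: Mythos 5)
Your proof is correct, but it takes a genuinely different route from the paper's. The paper argues via exponential integrability: since $\sigma$ is bounded, $X$ is a martingale with sub-Gaussian tails for $\|X-x\|_{*,T}$, hence $\E[\exp(c\|X\|_{*,T}^2)]<\infty$ for some $c>0$; combined with the linear growth of $f$, this verifies Novikov's condition on each piece of a sufficiently fine partition of $[0,T]$, and the piecewise-Novikov criterion \cite[Corollary 3.5.14]{karatzas-shreve} then yields the martingale property directly. You instead run the classical Bene\v{s}-type localization argument: stop at $\tau_n$, use the decomposition $1=\E[\EE_t(M)\mathbf{1}_{\{\tau_n>t\}}]+\QQ_n(\tau_n\le t)$, and kill the second term by a second-moment Gronwall estimate for the tilted dynamics under $\QQ_n$ (which is uniform in $n$ precisely because the tilted drift $\sigma f$ has linear growth), followed by Chebyshev. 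Both arguments use the boundedness of $\sigma$ in an essential way — the paper for the concentration inequality, you for the BDG bound on the martingale part — and both correctly isolate linear growth as the key hypothesis. Your route avoids exponential moments entirely and is self-contained modulo Girsanov, BDG and Gronwall, at the cost of being longer; the paper's is shorter but leans on the cited concentration inequality and the piecewise-Novikov corollary. Your handling of a general initial law $\mu$ (conditioning on $X(0)=x$ to get an $n$-independent constant $C(t)(1+|x|^2)$, then integrating and using $n^{-2}\int_{|x|<n}|x|^2\,\mu(dx)\to 0$) closes the only delicate point in the localization approach; note the paper sidesteps the same issue by conditioning on $X(0)=x$ at the outset.
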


\begin{proof}
	Since $\{\EE_t(M)\}_{t \ge 0}$ is always a $\QQ$-supermartingale, it suffices to show that $\Emb [\EE_T(M)|X(0)=x]=1$ for each $x \in \Rmb^d$ and $T \in (0,\infty)$.
	So fix $T \in (0,\infty)$ and assume without loss of generality that $X(0)=x \in \Rmb^d$ in \eqref{eq:X-appendix}.  
	Since $\sigma$ is bounded, $X$ is a martingale and,
        from standard concentration inequalities for  martingales (see, e.g.,\ \cite[Lemma 2.1]{Vandegeer1995exponential}),  we can find some $C > 0$ such that
        $\QQ(\|X-x\|_{*,T} \ge a) \le \exp(-Ca^2)$ for each $a > 0$. 
	It then follows from the equivalence between sub-Gaussian tails and finite square-exponential moments (see, e.g., \cite[Section 2.3]{BoucheronLugosiMassart2013concentration}) that there exists $c > 0$ such that $\E[ \exp( c \|X\|_{*,T}^2 ) ] < \infty$. 	
	Now taking $0=t_0<t_1<\dotsb<t_{n(T)}=T$ with $t_n-t_{n-1} \le c/C_T^2$, and using the linear growth condition \eqref{linear-growth} on $f$, we have
	\begin{align*}
		\E \left[ \exp \left( \half \int_{t_{n-1}}^{t_n} |f(s,X)|^2 \,ds \right) \right] & \le \E \left[ \exp \left( (t_n-t_{n-1}) C_T^2(1+\|X\|_{*,T}^2) \right) \right] < \infty.
	\end{align*}
	It then follows from \cite[Corollary 3.5.14]{karatzas-shreve} that $\{\EE_t(M)\}_{t \geq 0}$ is a true $\QQ$-martingale.
\end{proof}

\bibliographystyle{amsplain}
\bibliography{condind}

\end{document}